\documentclass{article}

\usepackage[T1]{fontenc}
\usepackage[utf8]{inputenc}
\usepackage{lmodern}
\usepackage{geometry}
\usepackage{amsmath,amssymb,amsfonts,amsthm,mathtools}
\usepackage{mathrsfs}
\usepackage{enumitem}
\usepackage{color}
\usepackage{microtype}
\usepackage{hyperref}
\usepackage[nameinlink,noabbrev]{cleveref}

\usepackage{indentfirst} 
\newtheorem{theorem}{Theorem}[section]
\newtheorem{lemma}[theorem]{Lemma}
\newtheorem{proposition}[theorem]{Proposition}
\newtheorem{corollary}[theorem]{Corollary}
\newtheorem{conjecture}[theorem]{Conjecture}

\newtheorem{observation}[theorem]{Observation}
\theoremstyle{definition}
\newtheorem{definition}[theorem]{Definition}
\theoremstyle{remark}

\newcommand{\R}{\mathbb R}
\newcommand{\N}{\mathbb N}
\newcommand{\cC}{\mathcal C}
\newcommand{\cF}{\mathcal F}
\newcommand{\cG}{\mathcal G}

\newcommand{\cA}{\mathcal A}
\newcommand{\cB}{\mathcal B}
\newcommand{\cD}{\mathcal D}
\newcommand{\lk}{\operatorname{lk}}
\newcommand{\Span}{\operatorname{span}}

\newcommand{\rank}{\operatorname{rank}}

\newcommand{\set}[1]{\left\{#1\right\}}
\newcommand{\abs}[1]{\left|#1\right|}
\newcommand{\paren}[1]{\left(#1\right)}

\newcommand{\binomset}[2]{\binom{#1}{#2}}
\newcommand{\col}{\operatorname{col}}

\title{Dimensional colorful Helly theorems and topological variants}
\author{
Wei Rao\thanks{Moscow Institute of Physics and Technology, Institutsky Lane 9, Dolgoprudny, Moscow Region, 141700, Russia. Email: \texttt{raowei1998@gmail.com}.}
}
\date{}

\begin{document}
\maketitle

\begin{abstract}
    We prove a dimensional strengthening of the colorful Helly theorem. Let $\mathcal{C}_1,\dots,\mathcal{C}_{d+1}$ be finite nonempty families of convex sets in $\mathbb{R}^d$. If $C_1\cap\cdots\cap C_{d+1}\neq\varnothing$ for every choice of $C_i\in\mathcal{C}_i$, then $\sum_{i=1}^{d+1}\dim(\bigcap\mathcal{C}_i)\geq0$, where $\dim\varnothing=-1$. We also obtain a dimensional strengthening of a theorem of Kim and Lew, in which the intersections are taken over unions of color classes. For simplicial complexes, we introduce the link-Leray dimension, defined as the Leray number of the link for a face and as $-1$ for a nonface. Using this invariant, we prove a matroidal extension of the dimensional colorful Helly theorem for $d$-Leray complexes. We further establish a corresponding strengthening of the topological Kim--Lew theorem in two ranges of parameters. As a special case, we recover the topological colorful Helly theorem of Kalai and Meshulam. Finally, we construct counterexamples to the unrestricted topological extension, even under a stronger local condition.
\end{abstract}

\medskip
\noindent\textbf{Keywords.} colorful Helly theorem; simplicial complex; matroid; tolerance complex.

\smallskip
\noindent\textbf{2020 Mathematics Subject Classification.} 52A35; 05B35; 05E45; 55U10.

\section{Introduction}\label{sec:introduction}

A family of sets is called \textit{intersecting} if its intersection is not empty. Throughout the paper, we assume that $d$ is a non-negative integer. The Helly theorem~\cite{helly1923mengen} claims that a finite family of convex sets in $\R^d$ is intersecting if any of its subfamilies of size at most $d+1$ is intersecting. Lov\'asz~\cite{barany1982generalization} proved a colorful variation of this result, which can be formulated as follows.

\begin{theorem}[colorful Helly theorem]\label{thm:colorful_helly}
Let $\cC_1,\dots,\cC_{d+1}$ be finite nonempty families of convex sets in $\R^d$. Suppose that for any $C_i\in \mathcal C_i$ for each $i$, we have that $C_1\cap\cdots\cap C_{d+1}\neq\varnothing$, then for at least one $i$, the family $\mathcal C_i$ is intersecting.
\end{theorem}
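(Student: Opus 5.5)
We prove the statement by contradiction, following the classical argument via a lexicographic minimum. Assume every family $\cC_i$ is non-intersecting. First we reduce to compact sets. Choose one point in each colorful intersection $C_1\cap\cdots\cap C_{d+1}$; there are finitely many such intersections. Then replace each $C\in\cC_i$ by the convex hull of those chosen points that lie in $C$. The new sets are compact convex polytopes contained in the old ones. The colorful hypothesis still holds, and every $\cC_i$ remains non-intersecting. After a small generic perturbation of the chosen points, we may also assume the sets are in general position enough that lexicographic minima are unique.

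Next, fix a generic linear functional, or more robustly the lexicographic order on $\R^d$. For every finite intersecting family $\mathcal F$ of these compact convex sets, let $m(\mathcal F)$ be the lexicographic minimum point of $\bigcap\mathcal F$. The key tool is a standard consequence of Helly's theorem: for any intersecting family $\mathcal F$, there is a subfamily $\mathcal B\subseteq\mathcal F$ of size at most $d$ with $m(\mathcal B)=m(\mathcal F)$. To see this, one applies Helly to the sets of $\mathcal F$ together with the open convex set $\{x: x<_{\mathrm{lex}} m(\mathcal F)\}$. This open set is convex because the lexicographic "less than" half-space is convex. Since $\mathcal F$ together with this set has empty intersection, some subfamily of size $d+1$ already has empty intersection. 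That subfamily must contain the extra set, because $\mathcal F$ itself is intersecting. Hence at most $d$ members of $\mathcal F$ determine the minimum.

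Now choose a colorful selection $\mathcal F=\{C_1,\dots,C_{d+1}\}$, with $C_i\in\cC_i$, whose minimum point $p=m(\mathcal F)$ is lexicographically maximal among all colorful selections. Such a choice exists because there are finitely many selections. By the lemma, some $d$ of these sets already have minimum $p$, so one color class is not needed; say it is $C_j$. Since $\cC_j$ is non-intersecting, some $C'\in\cC_j$ does not contain $p$. Replace $C_j$ by $C'$ to get a new colorful selection $\mathcal F'$. This selection is intersecting by hypothesis, and its intersection is contained in the intersection of the $d$ determining sets. Therefore $m(\mathcal F')\ge_{\mathrm{lex}} p$. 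Equality is impossible, because $p\notin C'$. Thus $m(\mathcal F')>_{\mathrm{lex}}p$, which contradicts the maximality of $p$. So some $\cC_i$ is intersecting.

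The main obstacle is justifying the existence and behaviour of lexicographic minima and the size-$d$ basis lemma. Two points need care. The lexicographic minimum of a compact convex set exists, obtained by successively minimizing coordinates over compact faces. The strict lex-half-space $\{x<_{\mathrm{lex}}p\}$ is convex, but it is neither open nor closed. That is harmless, since finite Helly requires only convexity. The compactness reduction in the first step is what guarantees that the minima exist at all.
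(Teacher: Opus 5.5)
Your proof is correct. It is the classical lexicographic-minimum argument of Lov\'asz, and each step holds. Shrinking each set to the convex hull of the chosen points preserves both the colorful hypothesis and non-intersection. The strict half-space $\{x<_{\mathrm{lex}}p\}$ is convex, so Helly gives at most $d$ sets with the same minimum. The lexicographically maximal selection then yields the contradiction. The generic perturbation is unnecessary: the lexicographic order is total, so minima of compact sets are automatically unique. A perturbation would also move the hulls outside the original sets, so you would have to re-argue non-intersection. Just drop it.

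The paper does not prove Theorem~\ref{thm:colorful_helly} directly. It reads it off from the dimensional strengthening Theorem~\ref{thm:dim-colorful}, since some summand must be $\geq 0$. Theorem~\ref{thm:dim-colorful} is proved topologically: the nerve is $d$-Leray, and the Kalai--Meshulam obstruction Lemma~\ref{lem:KM-obstruction} gives Theorem~\ref{thm:top-dimensional}, hence Corollary~\ref{cor:top-dimensional-colors}. Observation~\ref{ob:dim-delta} then converts link-Leray dimension into geometric dimension. The same statement is also recovered from Theorem~\ref{thm:geom-main}.

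That geometric route is closest to yours, with three substitutions:
\begin{itemize}
\item Instead of the colorful selection whose lexicographic minimum is maximal, the paper takes an inclusion-minimal polytope $Q$ meeting every colorful intersection (Lemma~\ref{lem:minimal-retaining}).
\item Instead of your lemma that the lex-minimum is determined by at most $d$ sets, it uses Lemma~\ref{lem:remove-constraint}: at a vertex $v$ of $Q$, at most $d$ constraints suffice, shown via a separating hyperplane and Helly in dimension $d-1$.
\item As in your argument, the omitted color class must then consist entirely of sets containing $v$ (Lemma~\ref{lem:good-colors}).
\end{itemize}

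Your route is shorter and needs only Helly's theorem, but it gives only the qualitative conclusion. The paper's machinery is heavier (Wegner's $d$-Leray theorem for nerves, a homological obstruction). In exchange it yields $\sum_i\dim(\bigcap\cC_i)\geq 0$ and the extensions to $d$-Leray complexes and matroids.
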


Arocha et al.~\cite{arocha2009very}, Montejano and Karasev~\cite{montejano2011topological} constructed different generalizations of colorful Helly theorem.

\begin{theorem}[Theorem~10 in~\cite{arocha2009very}]\label{thm:arocha}
    Let $d\geq 1$ and $1\leq k \leq d+1$ be integers. Let $\cC$ be a finite family of convex sets in $\mathbb R^d$, colored with $d+1$ colors. If every subfamily $\cC' \subseteq \cC$ of size $d+1$ having at least $k$ different colors is intersecting, then there are $d+2-k$ color classes whose union is intersecting.
\end{theorem}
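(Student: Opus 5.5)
The plan is to adapt Lovász's extremal proof of \cref{thm:colorful_helly} rather than reduce to it. A naive reduction feeds \cref{thm:colorful_helly} the $d+1$ families $\cD_j$, each a union of $d+2-k$ color classes over some window $W_j$ of colors. For this to work, every colorful transversal of $(\cD_1,\dots,\cD_{d+1})$ must use at least $k$ colors. That holds exactly when, for every $(k-1)$-set $T$ of colors, its complement is one of the windows. This forces all $\binom{d+1}{k-1}$ complements to appear among only $d+1$ windows, which generally fails. So I will argue directly.

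\textbf{Reductions.} First I will make three standard reductions. Replacing each set by the convex hull of finitely many witness points of the intersecting $(d+1)$-subfamilies lets me assume the sets are compact convex polytopes. A small generic perturbation then lets me fix a linear functional $f$ such that every nonempty intersection of a subfamily has a unique $f$-minimal point. By Helly's theorem, that point is already the $f$-minimum of the intersection of at most $d$ members of the subfamily. Finally, if $|\cC|\le d$, or if fewer than $k$ colors occur, the hypothesis is vacuous or easy. In those cases I will check directly, by padding with repeated sets, that any subfamily of size at most $d+1$ with at least $k$ colors is intersecting. This padded form is the version of the hypothesis I will actually use.

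\textbf{Extremal argument.} Suppose, for contradiction, that for every set $S$ of $d+2-k$ colors the union $\cC_S$ of the classes in $S$ is not intersecting. Consider all pairs $(\cF,S)$ where $S$ is a set of $d+2-k$ colors and $\cF\subseteq\cC$ is an intersecting subfamily of size at most $d$ whose colors avoid $S$ and number at most $k-1$. Among these, choose one maximizing $f$ at the minimum point $p=p(\cF)$ of $\bigcap\cF$. Next, fix a color set $S$ of size $d+2-k$ disjoint from the colors of $\cF$; such an $S$ exists when $\cF$ has at most $k-1$ colors. Since $\cC_S$ is not intersecting, by Helly some subfamily $\cG\subseteq\cC_S$ of size at most $d+1$ is not intersecting; I may shrink $\cG$ so that it is minimal. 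Then $\cF\cup\cG$ should contain at least $k$ colors, so by the padded hypothesis it is intersecting, as long as the total size is at most $d+1$. The minimum point of $\bigcap(\cF\cup\cG)$ lies strictly above $p$, and by Helly/genericity it is determined by at most $d$ sets. From these I will extract a new pair $(\cF',S')$ with $f(p(\cF'))>f(p)$, contradicting maximality.

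\textbf{Main obstacle.} The hard step is the size and color bookkeeping in that last paragraph. I must guarantee both that $|\cF\cup\cG|\le d+1$ with at least $k$ colors, so the hypothesis applies, and that the $\le d$ sets determining the new minimum again avoid some $(d+2-k)$-set of colors while using at most $k-1$ colors. My first attempt will be to replace $\cG$ by a single set $G\in\cC_S$ with $p\notin G$. Such a $G$ exists because otherwise $p$ lies in every set of $\cC_S$, which is then intersecting. Then $\cF\cup\{G\}$ has size at most $d+1$, and I will choose $\cF$ so that it has exactly $k-1$ colors, padding its colors if needed, so that adding $G$ reaches $k$ colors. The new minimum point is then determined by at most $d$ sets of $\cF\cup\{G\}$, using at most $k$ colors. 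If this count exceeds $k-1$, I would induct on $k$, with $k=d+1$ being \cref{thm:colorful_helly} itself, to trade one color for a choice of $S'$.
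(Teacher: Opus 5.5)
\textbf{Gap: the color count of the candidate families points the wrong way.} By requiring the colors of $\cF$ to avoid some $(d+2-k)$-set $S$, you force $|\col(\cF)|\le k-1$. But the step that invokes the hypothesis needs $|\col(\cF\cup\{G\})|\ge k$, i.e.\ $|\col(\cF)|\ge k-1$. So in effect you must maximize over families with exactly $k-1$ colors. ``Padding'' a maximizer with extra colors is not available, since adding sets changes $\bigcap\cF$ and moves $p$.

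That class is not closed under your update step. The at most $d$ sets of $\cF\cup\{G\}$ that carry the new, higher minimum may use all $k$ colors of $\cF\cup\{G\}$. Two instances:
\begin{itemize}
\item $\cF\cup\{G\}$ itself, when $|\cF|<d$;
\item $(\cF\setminus\{F\})\cup\{G\}$, when the color of the discarded $F$ is repeated in $\cF$. The discarded set cannot be $G$, since $\cF$ has the strictly lower minimum $f(p)$.
\end{itemize}
Such a family misses only $d+1-k$ colors, so no admissible $S'$ exists. Discarding further sets to lose a color may lower the minimum. Your ``induction on $k$ to trade one color for $S'$'' is not an argument: no inductive statement is formulated that would produce a competitor with larger $f$-value. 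As proposed, the proof does not close.

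\textbf{The fix.} Reverse the inequality and drop $S$ from the data. Maximize $f(p(\cF))$ over intersecting $\cF\subseteq\cC$ with $1\le|\cF|\le d$ and $|\col(\cF)|\ge k-1$. At a maximizer, choose $G$ as follows:
\begin{itemize}
\item if $|\col(\cF)|=k-1$, let $S$ be the complementary $d+2-k$ colors and take $G\in\cC_S$ with $p\notin G$;
\item if $|\col(\cF)|\ge k$, take any $G$ with $p\notin G$ (it exists, or $\cC$ itself is intersecting).
\end{itemize}
Then $\cF\cup\{G\}$ has at most $d+1$ members and at least $k$ colors, so it is intersecting. Its minimum lies strictly above $f(p)$. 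That minimum is already the minimum of $\cF\cup\{G\}$ with exactly one member deleted, and this family still has at least $k-1$ colors, giving the contradiction. You need this ``delete exactly one'' form of Lov\'asz's lemma (or re-enlarge inside $\cF\cup\{G\}$), because an arbitrary determining subfamily of at most $d$ sets can lose colors.

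\textbf{Size-$\le d+1$ hypothesis.} This form comes from adding further members of $\cC$, which cannot decrease the color count, not from repeating sets. It requires $|\cC|\ge d+1$, which the statement implicitly assumes. For $d=2$, $k=1$ and $\cC$ consisting of two disjoint sets, the hypothesis is vacuous but the conclusion fails.

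\textbf{Comparison with the paper.} Once corrected, your argument is the Lov\'asz-style counterpart of the paper's. The paper obtains the statement from \cref{thm:point-KL} with $r=d+1$, $m=1$. There, the vertex $v$ of an inclusion-minimal retaining polytope and its $d$-set certificate $\cA_v$ with at least $k-1$ colors (\cref{lem:good-colors}) play the roles of your $p$ and $\cF$. \cref{lem:remove-constraint} replaces the lowest-point lemma. Your version avoids Zorn's lemma but yields only a point. The paper needs the minimal polytope because it reuses it for the dimension counts.
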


\begin{theorem}[Theorem~1.6 in~\cite{montejano2011topological}]\label{thm:montejano}
    Let $d\geq 1$ be an integer, and let $\cC = \bigsqcup_{i=1}^{d+2} \cC_i$ be a finite family of convex sets in $\mathbb R^d$ colored with $d+2$ colors. If for any $\{C_1,\dots,C_{d+2} \}$, where $C_i \in \cC_i$ for $i \in [d+2]$, there is at most one non-intersecting subfamily of size $d+1$, then one of the color classes is intersecting.
\end{theorem}

A broader extension of the colorful Helly theorem, which is a common generalization of Theorem~\ref{thm:arocha} and Theorem~\ref{thm:montejano}, was obtained by Kim and Lew~\cite{Kim_Lew_2024}.

\begin{theorem}[Theorem~1.9 in~\cite{Kim_Lew_2024}]\label{thm:geometric-kim}
Let $d\geq 1$, $r\geq d+1$, $1\leq m\leq r$, and $m\leq k\leq\min\{m+d,r\}$ be integers. Let $\cC$ be a finite family of convex sets in $\R^d$, colored with $r$ different colors, such that $|\cC|\geq\max\{m+d,r\}$. Assume that, for every family $\{A_1,\dots,A_d,B_1,\dots,B_m\}\subseteq\cC$ colored with at least $k$ different colors, at least one of the subfamilies $\{A_1,\dots,A_d,B_i\}$, $i\in[m]$, is intersecting. Then there are $r-k+1$ color classes whose union is intersecting.
\end{theorem}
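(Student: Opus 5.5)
The plan is to derive the statement from the topological Kim--Lew theorem, applied to the nerve of $\cC$. Let $N$ be the nerve, with vertex set $\cC$ and faces the intersecting subfamilies. By the nerve theorem, $N$ is $d$-collapsible and hence $d$-Leray. So it suffices to prove the combinatorial statement for a $d$-Leray complex $X$ on a vertex set $V=V_1\sqcup\dots\sqcup V_r$, where $|V|\ge\max\{m+d,r\}$. The hypothesis reads: for every $A\in\binom{V}{d}$ and distinct $b_1,\dots,b_m\notin A$ such that $A\cup\{b_i\}$ uses at least $k$ colors, some $A\cup\{b_i\}$ is a face.

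The core of the argument would follow the Kalai--Meshulam strategy. Suppose no union of $r-k+1$ color classes is a face. Consider the matroid $M$ on $V$ whose independent sets are those meeting at most $k-1$\dots\ Here I would look for a matroid whose circuits, or whose "bad" sets, are exactly the $(d+m)$-subsets that carry at least $k$ colors, and which is compatible with the $d$-Leray condition. The most promising candidate is a truncation of a partition matroid. In that setting, the Kalai--Meshulam matroidal Helly theorem ($M\subseteq X$ for a $d$-Leray $X$ implies that some $V\setminus$ flat of rank $\le d$ is a face) is the tool I would invoke. The complement of a set of color classes is a union of the remaining color classes, which matches the conclusion of $r-k+1$ classes whenever the rank bound translates to "at most $k-1$ colors removed".

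Concretely, the steps are as follows.
\begin{enumerate}
\item Use the $d$-Leray property through links. Fix a $d$-set $A$ and pass to the link of $A$ when $A$ is a face. The hypothesis says that among any $m$ further vertices meeting the color requirement, one extends $A$.
\item Use Leray vanishing to show that the non-faces are "small" in a matroidal sense.
\item Run the standard Alexander-duality/long-exact-sequence argument on the complex of sets whose complements are unions of color classes.
\end{enumerate}

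The main obstacle is making the $m$-fold disjunction, ``at least one of $m$ sets is a face'', fit the matroid framework. My first attempt would be induction on $m$. For $m=1$ the statement is Theorem~\ref{thm:arocha}-type and follows from the colorful matroid Helly theorem. For the inductive step I would delete a vertex or restrict to a link, and check that the parameter constraints $m\le k\le\min\{m+d,r\}$ and $|\cC|\ge\max\{m+d,r\}$ are preserved. The size condition is exactly what guarantees that the required $d+m$ sets exist during this reduction.
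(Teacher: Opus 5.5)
There is a genuine gap. The proposal never supplies the step that carries the content of the theorem: how the $m$-fold disjunction in the hypothesis becomes a face of the nerve containing $r-k+1$ full color classes.

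Your opening suggests applying the topological Kim--Lew theorem to the nerve, with the partition matroid ($\rho(U)$ being the number of colors met by $U$). However, the hypothesis of Theorem~\ref{thm:topo_kim} is strictly stronger than the geometric one. When $A=\{u_1,\dots,u_{d+1}\}$ is not a face, it demands a \emph{single} $v_j$ with $\sigma\cup\{v_j\}\in X$ for all $\sigma\subsetneq A$. The geometric hypothesis only yields, for each facet $A\setminus\{u_i\}$, some $v_{j(i)}$ that depends on $i$. For example, take $d=1$ and the four differently colored intervals $a=[0,1]$, $b=[4,5]$, $v_1=[\tfrac{1}{2},\tfrac{3}{2}]$, $v_2=[\tfrac{7}{2},\tfrac{9}{2}]$. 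They satisfy the hypothesis of the statement with $r=4$ and $m=k=3$, yet no $v_j$ meets both $a$ and $b$. So that black box is not available.

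Once the geometry is discarded, you are left with a bare $d$-Leray statement for which no argument is given. First, the matroid is never defined; the sentence breaks off. Second, the truncated partition matroid you propose gives only $r-d$ classes already for $m=1$, which is weaker than $r-k+1$ when $k\le d$. There one needs the tolerance matroid $M^{d+1-k}$ (compare Lemma~\ref{lem:certificate-m1}). Third, the induction on $m$ does not explain how the hypothesis for $(m,k)$ survives deleting a vertex or passing to a link. Finally, step~3 names a technique rather than giving an argument.

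For comparison, the paper proves this statement (as Theorem~\ref{thm:point-KL}) without topology. It takes an inclusion-minimal nonempty polytope $Q$ that retains the hypothesis (Lemma~\ref{lem:minimal-retaining}). Every vertex $v$ of $Q$ is cut out by some required pair. Lemma~\ref{lem:remove-constraint}, which is Helly's theorem on a hyperplane separating $v$ from the other witness points, reduces the $d+1$ pinning sets to a $d$-family $\cA_v$ with $Q\cap\bigcap\cA_v=\{v\}$.

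The disjunction then becomes harmless. Suppose at least $k$ colors had a member missing $v$. Then one could choose $m$ such members $\cB$ so that $\cA_v\cup\cB$ has $d+m$ sets and at least $k$ colors. Since $Q\cap\bigcap\cA_v=\{v\}$ and $v\notin B$, no $\cA_v\cup\{B\}$ with $B\in\cB$ meets inside $Q$, contradicting that $Q$ retains the hypothesis (Lemma~\ref{lem:good-colors}). Hence at least $r-k+1$ color classes consist entirely of sets containing $v$, and their union is intersecting.

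If you prefer a topological proof, you would need to exploit the $d$-collapsibility of the nerve, not only its $d$-Leray property. (This is Wegner's theorem, not the nerve theorem.) Alternatively, you would have to supply a genuinely new $d$-Leray argument for the semi-intersecting hypothesis.
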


Throughout the paper, we use the convention $\dim\varnothing=-1$. Our first geometric result is the following dimensional strengthening of the colorful Helly theorem.

\begin{theorem}\label{thm:dim-colorful}
Let $\cC_1,\dots,\cC_{d+1}$ be finite non-empty families of convex sets in $\R^d$. If $C_1\cap\cdots\cap C_{d+1}\neq\varnothing$ for every $C_i\in\cC_i$, then 
\[
\sum_{i=1}^{d+1}\dim\paren{\bigcap\cC_i}\geq 0.
\]
\end{theorem}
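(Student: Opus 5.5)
The plan is to prove the contrapositive by induction on $d$, with Lovász's "highest lowest point" argument for Theorem~\ref{thm:colorful_helly} as the engine. Write $K_i=\bigcap\cC_i$ and $D_i=\dim K_i\in\{-1,0,\dots,d\}$. Let $E=\{i: K_i=\varnothing\}$. The inequality to prove is equivalent to
\[
\sum_{i\notin E} D_i\;\geq\;|E| .
\]
By Theorem~\ref{thm:colorful_helly}, $|E|\le d$. The case $d=0$ is trivial, since the single family lives in a point and is intersecting.

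For the inductive step, fix a generic linear functional $f$ on $\R^d$; it is enough to treat compact sets, which a standard reduction allows. For every rainbow $d$-tuple avoiding one colour $j$, consider the $f$-minimal point of its intersection, and let $p$ be the $f$-highest among all these points. Say it is attained by $C_1\cap\dots\cap C_d$, which omits colour $d+1$. Lovász's argument then shows that every set of $\cC_{d+1}$ contains $p$, so $p\in K_{d+1}$. Moreover, by genericity of $f$ and Helly, $p$ is already the $f$-minimum of at most $d$ of the sets $C_1,\dots,C_d$. Next pass to the hyperplane $H=\{f=f(p)\}\cong\R^{d-1}$ and build the families $\cC_i'=\{C\cap H: C\in\cC_i\}$, together with a suitable merging or modification of colours, so that the hypothesis of the theorem holds in dimension $d-1$. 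The intended mechanism is that every rainbow intersection meets the closed half-space $\{f\ge f(p)\}$ while its lowest point lies at or below level $f(p)$, so by convexity it meets $H$. The induction hypothesis, applied in $H$, then bounds $\sum \dim(K_i\cap H)$. Finally I would compare these dimensions with the $D_i$, using $\dim(K_i\cap H)\le D_i-1$ whenever $K_i$ crosses $H$ transversally.

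The step I expect to be the main obstacle is the bookkeeping of one dimension. Passing to $H$ loses one coordinate, but one colour must also disappear, otherwise the count $d+1$ versus $d$ does not match. So I must show that the discarded colour contributes enough: roughly, that $D_{d+1}\ge 1+(\text{number of colours empty in the original but nonempty in }H)$, or alternatively that $K_{d+1}$ genuinely extends on both sides of $H$. My first attempt would be to vary $f$ in a neighbourhood. For each generic $f'$ near $f$, the same argument yields a point $p(f')\in K_j$, and if the extremal colour $j$ is constant (pass to a subsequence), the points $p(f')$ sweep out a set of dimension $\ge |E|$ inside $K_j$. This is a Lovász-type argument run over a $|E|$-parameter family of functionals, and the parameter count is where the inequality should come from. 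If a single colour cannot absorb all the dimension, I would instead distribute the points $p(f')$ among the colours that attain the extremum and sum their dimensions.
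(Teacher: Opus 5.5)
Your proposal has a genuine gap. The step you flag as the main obstacle is the entire content of the theorem, and both mechanisms you suggest for it fail.

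First, the reduction to $H=\{f=f(p)\}$ is not valid, because a rainbow intersection need not meet $\{f\ge f(p)\}$. Take $d=1$, $\cC_1=\{[0,6]\}$, $\cC_2=\{[0,1],[5,6]\}$ and $f(x)=x$. The highest lowest point is $p=5$, attained by $[5,6]$, and indeed $p\in K_1$. But the rainbow intersection $[0,6]\cap[0,1]$ lies strictly below $f(p)$, and $[0,1]\cap H=\varnothing$. So the restricted families do not satisfy the hypothesis in $H$, and the ``suitable merging or modification of colours'' that would repair this is never specified.

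Second, varying $f$ in a neighbourhood gains no dimension. For polytopes, which is where the standard reduction lands, a generic $f$ gives each rainbow $d$-fold intersection a unique $f$-minimal vertex. That vertex remains the minimizer for all $f'$ near $f$, and the highest of these finitely many points is attained uniquely. Hence $p(f')=p(f)$ is locally constant and the swept set is a single point. Varying $f$ globally changes the extremal colour and gives no control over affine independence. Your reformulated target, that some $K_j$ has $\dim K_j\ge\abs{E}$, is true and sufficient, but nothing in the proposal produces it.

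The paper supplies the missing mechanism in two ways. Its proof of this theorem goes through Corollary~\ref{cor:top-dimensional-colors}. Apply Lemma~\ref{lem:KM-obstruction} to the nerve $X=N(\cC)$ and the partition matroid $M=\cC_1*\cdots*\cC_{d+1}\subseteq X$. This yields an intersecting subfamily $\tau$ with $\rho_0:=\rho_M(V\setminus\tau)\le\ell_X(\tau)\le d$, where $\rho_0$ is the number of colour classes not contained in $\tau$. By Lemma~\ref{lem:delta-monotone} and Observation~\ref{ob:dim-delta}, $\dim\bigcap\tau\ge\ell_X(\tau)\ge\rho_0$. So each of the $d+1-\rho_0\ge1$ classes contained in $\tau$ has $D_i\ge\rho_0$, while the other $\rho_0$ classes contribute at least $-1$. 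The sum is therefore at least $(d-\rho_0)\rho_0\ge0$. This also gives your target: classes in $E$ cannot lie in $\tau$, so $\abs{E}\le\rho_0$.

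There is also a purely geometric proof, the case $r=k=d+1$, $m=1$ of Theorem~\ref{thm:geom-main}, which is the correct global form of your ``many functionals'' idea. Take an inclusion-minimal polytope $Q$ inside which every rainbow $(d+1)$-tuple still intersects. A Lov\'asz-type argument at every vertex (Lemmas~\ref{lem:remove-constraint} and~\ref{lem:good-colors}) shows that each vertex of $Q$ lies in some $K_i$.
\begin{itemize}
\item If $\dim Q=d$, then $d+1$ affinely independent vertices, grouped by colour, give the bound directly.
\item If $\dim Q=d'<d$, one passes to $\operatorname{aff}Q$ rather than to a hyperplane through a single point. The hypothesis holds there because $Q$ was chosen to retain it. The $h=d-d'$ lost dimensions are paid for by $h$ colour classes that meet $Q$; a covering argument shows at least $h+1$ such classes exist. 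Induction then applies to the remaining $d'+1$ classes.
\end{itemize}
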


In particular, the dimension of the intersection of at least one of the families is at least $0$. Hence, Theorem~\ref{thm:dim-colorful} implies Theorem~\ref{thm:colorful_helly}. We also obtain a similar extension of Theorem~\ref{thm:geometric-kim}. Before stating it, we rephrase the condition in Theorem~\ref{thm:geometric-kim} and clarify some additional cases that arise below.

\begin{definition}\label{def:semi-intersecting}
For an integer $d\geq0$, a finite family $\cG$ of sets of size at least $d+1$ is \emph{$d$-semi-intersecting} if, for every proper subfamily $\cA\subsetneq\cG$ with $\abs{\cA}=d$, there is a set $C\in\cG\setminus\cA$ such that $\bigcap\paren{\cA\cup\{C\}}\neq\varnothing$.
\end{definition}

It is easy to verify that the condition in Theorem~\ref{thm:geometric-kim} is equivalent to requiring that every subfamily of $m+d$ sets, colored with at least $k$ colors, be $d$-semi-intersecting. For convenience, we shall extend Theorem~\ref{thm:geometric-kim} to the cases $d=0$ and $r\leq|\cC|<m+d$. In the latter case, we regard the rephrased condition as asserting that $\cC$ is $d$-semi-intersecting.

For a colored family $\cC=\cC_1\sqcup\cdots\sqcup\cC_r$ and $I\subseteq[r]$, write $K_I(\cC)\coloneqq\bigcap_{i\in I}\paren{\bigcap\cC_i}$.
Let $\binom{[r]}{s}$ be the set of all subsets of $[r]$ of size $s$, where $s$ is a positive integer. The next theorem is our extension of Theorem~\ref{thm:geometric-kim}.

\begin{theorem}\label{thm:geom-main}
Let $d,r,m,k$ be integers such that $d\geq0$, $r\geq d+1$, $1\leq m\leq r$, and $m\leq k\leq\min\{m+d,r\}$. Let $\cC=\cC_1\sqcup\cdots\sqcup\cC_r$ be a finite family of convex sets in $\R^d$, colored with $r$ different colors. Assume that every subfamily of $m+d$ sets, colored with at least $k$ colors, is $d$-semi-intersecting. Set $s=r-k+1$ and
\[
p=\begin{cases}
1,&k=1,\\
d+1,&k\geq2.
\end{cases}
\]
Then there are pairwise distinct sets $I_1,\dots,I_p\in\binomset{[r]}{s}$ such that $\sum_{j=1}^{p}\dim K_{I_j}(\cC)\geq0$.
\end{theorem}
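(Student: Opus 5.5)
The plan is to reduce the statement to the two earlier results. Theorem~\ref{thm:geometric-kim}, extended to $d=0$ and to $r\le|\cC|<m+d$, handles the qualitative part. Theorem~\ref{thm:dim-colorful} handles the dimension bookkeeping. The case $k=1$ should be immediate: then $p=1$ and $s=r$, so we must show that $K_{[r]}(\cC)=\bigcap\cC$ is nonempty. With $k=1$ the hypothesis says that every $(m+d)$-subfamily of $\cC$ is $d$-semi-intersecting, with no color constraint. Applying Theorem~\ref{thm:geometric-kim} with $k=1$, or directly a Helly-type argument (any $d+1$ sets can be completed to an $(m+d)$-family, and in the case $m=1$ the semi-intersecting condition forces the $d+1$ sets to meet), gives $\bigcap\cC\neq\varnothing$, i.e.\ $\dim\ge0$. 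The degenerate cases $d=0$ and $|\cC|<m+d$ need to be checked by hand first. In $\R^0$ every nonempty set is the point, so the claim reduces to counting empty sets.

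For $k\ge2$ I would mimic the passage from Theorem~\ref{thm:colorful_helly} to Theorem~\ref{thm:dim-colorful}, but at the level of the $s$-subsets of colors. Introduce the auxiliary families $\cD_I\coloneqq\bigcup_{i\in I}\cC_i$ for $I\in\binom{[r]}{s}$, so that $K_I(\cC)=\bigcap\cD_I$. The aim is to show that among the $\cD_I$ there are $d+1$ distinct ones that satisfy the colorful hypothesis of Theorem~\ref{thm:dim-colorful}, namely $D_1\cap\cdots\cap D_{d+1}\neq\varnothing$ whenever $D_j\in\cD_{I_j}$. That theorem then yields $\sum_j\dim K_{I_j}(\cC)\ge0$ directly.

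To find such $I_1,\dots,I_{d+1}$ I would argue by contradiction, following the Kim--Lew proof. Suppose that for every choice of $d+1$ distinct $s$-sets there are sets $D_j\in\cD_{I_j}$ with empty intersection. Choose these witnesses greedily, so that the $d$ sets $A_1,\dots,A_d$ coming from the first $d$ chosen classes, together with $m$ further sets $B_1,\dots,B_m$, use at least $k$ colors. This is possible because $s=r-k+1$: any $k-1$ colors miss some $s$-set entirely. Then every $\{A_1,\dots,A_d,B_i\}$ is non-intersecting, which violates $d$-semi-intersection.

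If the direct combinatorial selection fails, the fallback is a generic-flat argument. Let $t_I=\dim K_I(\cC)$, and assume that every $p$-tuple of distinct $I$'s has $\sum t_{I_j}<0$. Intersect every set with a generic affine subspace $H$ of suitable dimension, so that the nonempty $K_I$ with small $t_I$ become empty while the semi-intersecting hypothesis, which is an open condition after slight shrinking, survives in $H$. Applying the lower-dimensional Theorem~\ref{thm:geometric-kim} inside $H$ then gives a contradiction.

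I expect the main obstacle to be the selection step for $k\ge2$. The colorful condition must hold for all transversals of $d+1$ distinct $s$-classes, whereas the hypothesis only controls $(m+d)$-families with at least $k$ colors. Converting a non-intersecting transversal into a violating $(A,B)$-configuration with enough colors is exactly where the Kim--Lew counting enters. The dimension accounting under the generic-flat intersection also needs care, since $\dim(K\cap H)=\dim K+\dim H-d$ holds only for generic $H$ and only when the right-hand side is at least $0$.
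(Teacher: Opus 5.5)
Your $k=1$ case is fine: there $m=1$, so every $d+1$ sets meet and Helly applies. The reduction for $k\ge2$, however, rests on an intermediate claim that is false.

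Take $d=1$ and $r=m=k=2$, so $s=1$ and $p=2$. Let $\cC_1=\{[0,1],[0,3]\}$ and $\cC_2=\{[2,3]\}$ in $\R$.

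\begin{itemize}
\item The only $3$-member subfamily is $\cC$ itself. It uses both colors, and it is $1$-semi-intersecting because $[0,3]$ meets both other members.
\item The only two distinct $1$-subsets of $[2]$ are $\{1\}$ and $\{2\}$. But $[0,1]\in\cD_{\{1\}}$ and $[2,3]\in\cD_{\{2\}}$ are disjoint.
\end{itemize}

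So no choice of $I_1,I_2$ satisfies the hypothesis of Theorem~\ref{thm:dim-colorful}. The conclusion of Theorem~\ref{thm:geom-main} does hold here (the sum is $1+1$), but it cannot be reached this way. For $m\ge2$ the semi-intersecting hypothesis allows disjoint rainbow $(d+1)$-tuples, so it cannot in general yield a colorful Helly condition.

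Even for $m=1$ your selection is uncontrolled. A $(k-1)$-set $T$ of colors meets every $s$-subset except $[r]\setminus T$. Hence, as soon as $\binom{r}{s}>d+1$, any $d+1$ distinct $s$-sets admit a transversal using at most $k-1$ colors, and the hypothesis says nothing about such a transversal. Your greedy contradiction also has a separate problem. It gives no mechanism for the $d$ sets $A_1,\dots,A_d$ to be common to the different non-intersecting transversals, and a violation of $d$-semi-intersection needs exactly that.

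The generic-flat fallback fails too, because the hypothesis is not inherited by a generic affine subspace $H$. The witnessing intersections $\bigcap(\cA\cup\{C\})$ may be single points off $H$. Inside $H$ you would also need the stronger condition with $\dim H$ in place of $d$, and nothing supplies it.

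The missing idea is to restrict not to a generic flat but to an inclusion-minimal polytope $Q$ that retains the condition, as the paper does.
\begin{itemize}
\item Minimality forces each vertex of $Q$ to be cut out by some required pair. Since $d+1$ convex sets cutting a vertex out of $Q$ can be reduced to $d$, it follows that each vertex lies in every member of at least $s$ color classes.
\item If $\dim Q=d$, you take $d+1$ affinely independent vertices and group them by their assigned $s$-sets; this gives the dimension count.
\item If $\dim Q<d$, the condition passes to $\operatorname{aff}Q$ with reduced parameters, precisely because $Q$ retains it and a small empty subfamily inside $Q$ would already violate the original condition. Induction on $d$, together with a hitting-set count that produces $d-\dim Q$ further good $s$-sets, then finishes the proof.
\end{itemize}
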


In particular, at least one of the selected intersections is nonempty. Hence, Theorem~\ref{thm:geom-main} implies Theorem~\ref{thm:geometric-kim}. Moreover, by setting $r=d+1$, $m=1$, and $k=d+1$, we recover Theorem~\ref{thm:dim-colorful} from Theorem~\ref{thm:geom-main}.

The paper is organized as follows. Section~\ref{sec:topological-setting} introduces the topological framework and states our results for $d$-Leray complexes and matroids. Section~\ref{sec:prelim} collects auxiliary properties and derives Theorem~\ref{thm:dim-colorful} from Corollary~\ref{cor:top-dimensional-colors}. Section~\ref{sec:geometric} proves the general geometric result, Theorem~\ref{thm:geom-main}, by a minimal-polytope argument and induction on the dimension. Sections~\ref{sec:top-dimensional} and~\ref{sec:top-KL} prove Theorems~\ref{thm:top-dimensional} and~\ref{thm:m=1ork>=m+d-1}, respectively. Finally, Section~\ref{sec:counterexample} presents the counterexample and sharpness constructions in Theorem~\ref{theorem:counterexample-and-sharpness}.

\section{Extensions of the colorful Helly theorem for $d$-Leray complexes}\label{sec:topological-setting}
\subsection{Topological background}

An \textit{abstract simplicial complex}, or simply a \textit{complex}, $K$ is a finite family of sets closed under taking subsets: if $\tau\in K$ and $\sigma\subseteq\tau$, then $\sigma\in K$. The elements of $K$ are called its \textit{faces}, and the union of its faces is its \textit{vertex set}, denoted by $V:=V(K)$. For $W\subseteq V$, the \textit{induced subcomplex} of $K$ on $W$ is $X[W]=\{\sigma\in X:\sigma\subseteq W\}$.
For convenience, we assume that all complexes we consider are distinct from the void complex $\varnothing$; it should not be confused with the \textit{empty complex} $\{\varnothing\}$.

For any $\tau\in X$, its \textit{link} in $K$ is defined as follows
\[
\lk(K,\tau)
\coloneqq\{\sigma\in X:\sigma\cap \tau=\varnothing
\text{ and }\sigma\cup \tau\in K\}.
\]

For an abstract simplicial complex $K$ and $i\geq -1$, we denote its \textit{$i$th reduced homology group over $\mathbb Z$} by $\widetilde{H}_i(K)$. 
In particular, we use the convention that $\widetilde H_{-1}(\{\varnothing\})\cong\mathbb Z$, whereas $\widetilde H_{-1}(X)=0$ when $X$ has a vertex. For a detailed introduction of this notion, we refer the reader to~\cite{hatcher2002algebraic}.

For a simplicial complex $K$ with vertex set $V$, its \emph{Leray number} is defined as follows
\[
L(K)=\min \big\{j\geq 0: \widetilde H_i(K[W])=0\text{ for every $W\subseteq V$ and every $i\geq j$}\big\}.
\]
We say that $K$ is a \textit{$d$-Leray complex} if $L(K)\leq d$. By definition, $L(K)\geq L(K[W])$ for any $W\subseteq V$.

Also, we use the following property of the
Leray number
\begin{equation}
\label{equation characterization of Leray}
L(K)=\max_{\sigma\in K}\ell_K(\sigma),
\end{equation}
where 
\begin{equation}\label{equation definition of ell}
\ell_K(\sigma)
\coloneqq \min\big\{j\geq -1 \colon
\widetilde H_i(\lk(K,\sigma))=0
\text{ for every }i\geq j\big\}.
\end{equation}
For details, we refer the reader to Proposition~3.1 in~\cite{kalai2006intersections}.

Next, we introduce an invariant, which plays in our topological results a role of dimension in Theorem~\ref{thm:geom-main}.

\begin{definition}\label{def:link-Leray-dimension}
Let $K$ be a simplicial complex with vertex set $V$. For every $W\subseteq V$, its \emph{link-Leray dimension} with respect to $K$ is defined as follows 
\[
\delta_K(W)\coloneqq 
\begin{cases}L(\lk(K,W)) &\text{if }W\in K;\\
-1, &\text{otherwise.}
\end{cases}
\]
\end{definition}

A \textit{matroid} $M$ is an abstract simplicial complex containing $\varnothing$ and satisfying the exchange axiom: if $A,B\in M$ and $|A|<|B|$, then there exists $x\in B\setminus A$ such that $A\cup\{x\}\in M$. The members of $M$ are called its \textit{independent sets}. An independent set maximal under inclusion is called a \textit{basis} of $M$. It is well known that all bases of $M$ have the same cardinality, called the \textit{rank} of $M$.

Let $V$ be the vertex set of the matroid $M$. It is well-known that for every $W\subset V$, the complex $M[W]$ is a matroid. The \textit{rank function} is the map $\rho_M:2^V\to\N$ such that for every $W\subseteq V$, the value $\rho_M(W)$ is the rank of $M[W]$.

The \textit{span} of a set $A\subseteq V$ is defined as follows
\[
\Span_M(A)\coloneqq \{v\in V:\rho_M(A\cup\{v\})=\rho_M(A)\}.
\]

We abbreviate $\rho_M$ and $\Span_M$ to $\rho$ and $\Span$, respectively, whenever the matroid is unambiguous. We refer to~\cite{oxley2006matroid} for more background.

\subsection{Previous topological extensions}

Recall a result of Kalai and Meshulam extending the colorful Helly theorem; see Theorem 1.6 in~\cite{kalai2005topological}.

\begin{theorem}\label{thm:KM-intro}
Let $X$ be a $d$-Leray simplicial complex. Let $M$ be a matroid such that it is a subcomplex of $X$ with the same vertex set. Then there is a face $\tau\in X$ such that $\rho_M(V\setminus\tau)\leq d$.
\end{theorem}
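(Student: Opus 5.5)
Since Theorem~\ref{thm:KM-intro} is quoted from~\cite{kalai2005topological}, the paper may simply cite it. If I had to prove it, I would argue by contradiction, taking a minimal counterexample and producing nonvanishing homology in dimension $\geq d$ of an induced subcomplex, which contradicts $L(X)\leq d$.

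\emph{Setup.} Suppose $\rho_M(V\setminus\tau)\geq d+1$ for every face $\tau\in X$. Taking $\tau=\varnothing$ gives $\rho_M(V)\geq d+1$. Among all vertex subsets $W$ for which the pair $(X[W],M[W])$ is still a counterexample, choose $W$ minimal. Both properties pass to induced subcomplexes: $X[W]$ is $d$-Leray, and $M[W]$ is a matroid on $W$ contained in $X[W]$. So I may assume $W=V$, and that for every vertex $v$ there is a face $\tau_v\in X$ with $v\notin\tau_v$ and $\rho_M(V\setminus(\tau_v\cup\{v\}))\leq d$. Since $\tau_v$ itself fails for $V$, we get $\rho_M(V\setminus\tau_v)=d+1$. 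Hence $v$ is a coloop of $M[V\setminus\tau_v]$, and $\tau_v\cup\{v\}\notin X$. By minimality I also expect $\rho_M(V)=d+1$ and $M$ loopless, after discarding loops and parallel elements where needed.

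\emph{Auxiliary complex.} Let $D=\{\sigma\subseteq V:\rho_M(V\setminus\sigma)\geq d+1\}$. This is a simplicial complex: it is the family of sets whose complement spans $M$, i.e.\ the Alexander dual of the complex of nonspanning sets, which is the independence complex of the dual matroid $M^*$ up to coskeleton. The hypothesis says $X\subseteq D$. The standard fact I would use is that the nonspanning-set complex of a rank-$(d+1)$ matroid is homotopy equivalent to a wedge of $(d-1)$-spheres, with a nonzero number of spheres when $M$ is loopless and coloop-free (the reduced Euler characteristic is the Möbius invariant / $\beta$-invariant). By Alexander duality, $D$ then has nonvanishing homology concentrated in one dimension, namely $|V|-d-2$.

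\emph{Comparing $X$ with $D$ and the main obstacle.} I would show that minimality forces $X=D$, or at least that $X$ and $D$ share enough faces to carry the same top homology. The key step, and the one I expect to be hardest, is converting this into a contradiction with the Leray property. $X$ is $d$-Leray, but $D$ has homology in dimension $|V|-d-2$, which need not be $\geq d$. So it is not enough to look at $X[V]$; one must pass to links.

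My plan there is to use the characterization \eqref{equation characterization of Leray}. For a suitable face $\sigma$, namely a basis of $M^*$, so that $V\setminus\sigma$ is a basis of $M$, I would compute $\lk(D,\sigma)$. This should be the boundary of the simplex on $V\setminus\sigma$, a $(d-1)$-sphere, which gives only $\ell\geq d$. To reach a genuine contradiction, I would instead run a joint induction on $(|V|,d)$. The deletion $X[V\setminus\{v\}]$ with $M\setminus v$ is handled by minimality. The link $\lk(X,v)$ with the contraction $M/v$ (rank drops to $d$) is the case to attack. The delicate point is that the link of a $d$-Leray complex is only $d$-Leray, not $(d-1)$-Leray. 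This is exactly where the Mayer--Vietoris sequence for $X=X[V\setminus v]\cup\mathrm{star}(v)$ must be used, via the witnesses $\tau_v$ found in the setup, to force a nonzero class in $\widetilde H_d$ of some induced subcomplex.
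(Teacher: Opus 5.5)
Your proposal has a genuine gap. The step that carries all the content is only announced, never carried out: extracting from $M\subseteq X$ a link or induced subcomplex of $X$ whose homology is nonzero in a degree the Leray bound forbids. Everything you actually compute concerns $D$ rather than $X$. Since you only know $X\subseteq D$, the homology of $D$ and of its links tells you nothing about $X$. Nothing supports ``minimality forces $X=D$'' either: the minimality witnesses only say that the sets $\tau_v\cup\{v\}$ are \emph{not} faces of $X$, so they bound $X$ from above, never from below.

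The one link you compute is also wrong. Under your normalization $\rho_M(V)=d+1$, let $\sigma$ be a basis of $M^*$. Then $(V\setminus\sigma)\setminus\{b\}$ has rank $d$ for every $b\in V\setminus\sigma$, so $\sigma$ is a facet of $D$ and $\lk(D,\sigma)=\{\varnothing\}$, whence $\ell_D(\sigma)=0$. The boundary of the simplex on $V\setminus\sigma$ is the induced subcomplex of the complex of nonspanning sets on $V\setminus\sigma$, not a link of $D$. Separately, $\rho_M(V)=d+1$ does not follow from minimality. You get it by replacing $M$ with its truncation to rank $d+1$, which still lies in $X$ and has exactly the same sets of rank at least $d+1$.

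Your joint induction on $(|V|,d)$ breaks exactly where you suspect. The complex $\lk(X,\{v\})$ is only $d$-Leray, so $(\lk(X,\{v\}),M/v)$ is not an instance of the theorem for $d-1$, and the Mayer--Vietoris repair is left unspecified. The missing idea is to prove a statement in which $d$ does not appear at all.

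The paper does not reprove the theorem; it is cited from Kalai and Meshulam. Within the paper it is the contrapositive of Lemma~\ref{lem:KM-obstruction}, which holds for an arbitrary complex $X$: if $M\subseteq X$, then some face $\tau\in X$ satisfies $\rho_M(V\setminus\tau)\leq\ell_X(\tau)$. The $d$-Leray hypothesis is then used only once, through $\ell_X(\tau)\leq L(X)\leq d$ by \eqref{equation characterization of Leray}. This is exactly the first step of the proof of Theorem~\ref{thm:top-dimensional}, and the paper also recovers the statement from Theorem~\ref{thm:m=1ork>=m+d-1} with $m=1$, $k=d+1$.

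That obstruction compares, face by face, the link invariant $\ell_X(\sigma)$ with the rank of the complement. Since $\ell_{\lk(X,\{v\})}(\sigma)=\ell_X(\sigma\cup\{v\})$ by Lemma~\ref{lem:link-identities}, passing to links never asks for a smaller Leray number, which is the obstacle your $d$-indexed induction cannot get past. You should either invoke that lemma together with \eqref{equation characterization of Leray}, or supply a proof of it. The minimal-counterexample setup and the complex $D$ do not replace it.
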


Very recently, Kim and Lew proved the following extension of this theorem; see Theorem 1.7 in~\cite{Kim_Lew_2024}.

\begin{theorem}
\label{thm:topo_kim}
Let $r, m,$ and $k$ be integers such that $d\geq1$, $r\geq d+1$, $1\leq m\leq r$, and $m\leq k\leq\min\{m+d,r\}$. Let $V$ be a finite set with $|V|\geq\max\{m+d,r\}$. Let $X$ be a $d$-Leray simplicial complex with vertex set $V$. Let $M$ be a matroid of rank $r$ with vertex set $V$.

Assume that for every $U=\{u_1,\dots,u_{d+1},v_1,\dots,v_{m-1}\}\subseteq V$ with $\rho(U)\geq k$, either $\{u_1,\dots,u_{d+1}\}\in X$ or there exists $j\in[m-1]$ such that $\sigma\cup\{v_j\}\in X$ for every $\sigma\subsetneq\{u_1,\dots,u_{d+1}\}$. Then there is a face $\tau\in X$ such that $\rho(V\setminus\tau)\leq k-1$.
\end{theorem}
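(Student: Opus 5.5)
The plan is to argue by contradiction, in the Kalai--Meshulam style. Assume that $\rho(V\setminus\tau)\geq k$ for every face $\tau\in X$. Introduce the auxiliary complex
\[
Y_k\coloneqq\{\sigma\subseteq V:\rho(V\setminus\sigma)\geq k\}.
\]
This is a simplicial complex, since removing elements from $\sigma$ can only increase $\rho(V\setminus\sigma)$. The assumption then says exactly that $X\subseteq Y_k$.

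The first step is to record the topology of $Y_k$. By Alexander duality, $Y_k$ is dual to the family of sets $A=V\setminus\sigma$ with $\rho(A)\leq k-1$, which is the truncation of $M$ to rank $k-1$. Truncations of matroids are matroids, hence shellable (Bj\"orner). So the Alexander dual of $Y_k$ has homology only in dimension $k-2$, and hence $Y_k$ has homology only in dimension $|V|-k-1$. Moreover, this top group is nonzero, since $\rho(V)=r\geq k$ and the truncation is not a full simplex. The same computation should go through for every induced subcomplex $Y_k[W]$ with $\rho(W)\geq k$, using $M[W]$ in place of $M$.

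The second step compares $X$ with $Y_k$ using the $d$-Leray property. The idea, as in the proof of Theorem~\ref{thm:KM-intro}, is to pass through the faces of $Y_k$ one at a time. Concretely, I would choose $W\subseteq V$ minimal with $\rho(W)\geq k$ and $X[W]\subseteq Y_k[W]$. I would then run the Mayer--Vietoris / link sequence
\[
\widetilde H_i(X[W\setminus v])\to\widetilde H_i(X[W])\to\widetilde H_{i-1}(\lk(X[W],v)).
\]
The links involved are again $d$-Leray, by \eqref{equation characterization of Leray}. Combining minimality of $W$ with the vanishing of all homology of $X$ in degrees $\geq d$, one should conclude that $|W|$ is small: $|W|-k\leq d+m-1$ after accounting for the $m-1$ extra vertices. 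One should also find a set of size $d+1$ in $W$ that is a minimal nonface of $X$.

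The third step extracts the forbidden configuration. Write the set from the second step as $\{u_1,\dots,u_{d+1}\}$ and the remaining vertices of $W$ as $v_1,\dots,v_{m-1}$. Set $U=\{u_1,\dots,u_{d+1},v_1,\dots,v_{m-1}\}$; by construction $\rho(U)\geq k$. The hypothesis then gives some $j$ such that $\sigma\cup\{v_j\}\in X$ for every $\sigma\subsetneq\{u_1,\dots,u_{d+1}\}$. This means that coning the boundary sphere of $\{u_1,\dots,u_{d+1}\}$ from $v_j$ lies in $X$, and this should kill the homology class detected in the second step, giving the contradiction. The constraints $m\leq k\leq\min\{m+d,r\}$ and $|V|\geq\max\{m+d,r\}$ are used to guarantee that such a $U$ of the right size and rank exists, padding with extra vertices if necessary.

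The main obstacle is the second step. One must show that the minimal obstruction has size exactly $m+d$, split as a $(d+1)$-set that is a minimal nonface plus $m-1$ further vertices, and that the relevant homology class is supported on a boundary sphere. Only in that form can the coning hypothesis be applied. I would first try induction on $m$: for $m=1$ the statement is Theorem~\ref{thm:KM-intro} applied to the truncation of $M$ to rank $k$. For the step from $m-1$ to $m$, I would contract a vertex $v$ in $M$ and pass to $\lk(X,v)$, checking that the hypothesis is inherited with parameters $(m-1,k-1)$.
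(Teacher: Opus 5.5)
Your proposal has genuine gaps, and I would not accept it as a proof. The core of the theorem sits in your Steps 2 and 3, and there it is asserted rather than proved. You claim a minimal obstruction splits as a $(d+1)$-set $A\notin X$ plus $m-1$ further vertices, none of which cones $\partial A$ (i.e. $\sigma\cup\{v\}\in X$ for all $\sigma\subsetneq A$), with total rank $\ge k$. That is essentially the statement itself, and nothing in Step 1 produces it.

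Step 1 is also inaccurate. The family $\{A\subseteq V:\rho(A)\le k-1\}$ contains dependent sets, so it is not the truncation of $M$, and Bj\"orner's shellability does not apply as stated. You also never explain how the homology of $Y_k$ interacts with the mere inclusion $X\subseteq Y_k$.

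More importantly, the contradiction you expect in Step 3 cannot come from coning alone. The complex $X_d(A,\{v\})$ of Section~\ref{sec:counterexample} is $d$-Leray by Observation~\ref{lem:common-construction}, has $A\notin X$, and has $v$ coning $\partial A$. The missing ingredient is Lemma~\ref{lem:KL-boundary}: if the set $U_A$ of all vertices coning $\partial A$ is nonempty, then $U_A\cup(A\setminus\{a\})\in X$ for every $a\in A$. This is what turns the coning hypothesis into a face with small complement.

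Your fallback induction fails as stated too.
\begin{itemize}
\item \emph{Base case.} For $m=1$ and $k\le d$, the truncation of $M$ to rank $k$ has rank $\le d$. So Theorem~\ref{thm:KM-intro} is satisfied by $\tau=\varnothing$ and gives nothing; only $k=d+1$ is covered. The right base case uses the tolerance matroid $M^t$ with $t=d+1-k$. Under the contradiction hypothesis, Proposition~\ref{prop:tolerance-rank} gives $\rho^t(V\setminus\sigma)\ge\min\{|V\setminus\sigma|,d+1\}\ge\ell_X(\sigma)+1$ for every face $\sigma$. Hence Lemma~\ref{lem:KM-obstruction} gives $M^t\nsubseteq X$, and Lemma~\ref{lem:Leray-Helly} then extracts a $(d+1)$-nonface of rank $\ge k$, as in the proof of Lemma~\ref{lem:certificate-m1}.
\item \emph{Inductive step.} Contracting $v$ and passing to $\lk(X,v)$ does not inherit the hypothesis. $A\in X$ does not give $A\cup\{v\}\in X$, and coning in $X$ does not give coning in the link.
\end{itemize}

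The induction that works keeps $X$ and $M$ fixed.
\begin{itemize}
\item If the condition holds for $(m-1,k-1)$, induction gives even $\rho(V\setminus\tau)\le k-2$.
\item Otherwise take a witness $U=A\sqcup\{v_1,\dots,v_{m-2}\}$ with $\rho(U)\ge k-1$, $A\notin X$, and no $v_j$ coning $\partial A$. Every $w$ with $\rho(U\cup\{w\})\ge k$ must then cone $\partial A$.
\item Such $w$ exist. If $\rho(U)=k-1<r$, take any $w\notin\Span(U)$. Otherwise take any $w\notin U$, which exists because $|V|\ge m+d$.
\item Lemma~\ref{lem:KL-boundary} then makes $\tau=U_A\cup(A\setminus\{a\})$ a face. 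Its complement lies in $\Span(U)$ or in $\{a,v_1,\dots,v_{m-2}\}$, so it has rank at most $k-1$.
\end{itemize}
The paper quotes this statement from Kim--Lew without proof. Still, this is exactly the argument of Cases~1 and~2 in the proof of Proposition~\ref{prop:positive-adjacent} and of Lemma~\ref{lem:certificate-extreme}, there run in strengthened form.
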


In addition to these closely related results, there have been a recent development~\cite{soberon2026kkm}.

\subsection{Our contribution}\label{sec:our-contribution}

Our first topological result uses the link-Leray dimension introduced above.

\begin{theorem}\label{thm:top-dimensional}
Let $r$ be an integer with $r\geq d+1$. Let $X$ be a $d$-Leray simplicial complex. Let $M$ be a matroid of rank $r$ such that it is a subcomplex of $X$ with the same vertex set. 

Then $M$ has a basis $B$ such that
\[
  \sum_{v\in B}
  \delta_X\paren{V\setminus\Span(B\setminus\{v\})}
  \geq0.
\]
\end{theorem}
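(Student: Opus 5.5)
The approach is to combine the Kalai--Meshulam theorem (Theorem~\ref{thm:KM-intro}) with a minimality argument, and then to pass to links. For a basis $B$ and $v\in B$, write $D_v\coloneqq V\setminus\Span(B\setminus\{v\})$; this is the fundamental cocircuit of $v$ with respect to $B$. Then $\delta_X(D_v)\geq0$ exactly when $D_v\in X$, and in that case it equals $L(\lk(X,D_v))$.

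\textbf{Step 1: choose the face and the basis.} Among all faces $\tau\in X$, take one for which $t\coloneqq\rho(V\setminus\tau)$ is minimal, and among those a maximal one. Theorem~\ref{thm:KM-intro} gives $t\leq d<r$. Let $F=\Span(V\setminus\tau)$, which has rank $t$. Choose a basis $B_F$ of $F$ and extend it to a basis $B$ of $M$.

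\textbf{Step 2: split the terms.} For $v\in B\setminus B_F$ we have $\Span(B\setminus\{v\})\supseteq F\supseteq V\setminus\tau$. Hence $D_v\subseteq\tau$, so $D_v\in X$ and $\delta_X(D_v)\geq0$; there are $r-t\geq1$ such ``good'' indices. The remaining $t$ indices $v\in B_F$ each contribute at least $-1$. So the sum is at least
\[
\sum_{v\in B\setminus B_F}L(\lk(X,D_v))-t,
\]
and it suffices to find a single good $v$ with $L(\lk(X,D_v))\geq t$.

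\textbf{Step 3: the key claim.} For a good $v$, put $Y=\lk(X,D_v)$ and $\ell=L(Y)\leq d$. I would apply Theorem~\ref{thm:KM-intro} (or its inductive form on $d$) to $Y$ with a suitable matroid $M'$ living on the vertex set of $Y$. The natural candidates are:
\begin{itemize}[label={}]
  \item the contraction $M/D_v$ restricted to the vertices of $Y$;
  \item the restriction of $M$ to $\Span(B\setminus\{v\})\cap V(Y)$, which has rank $\leq r-1$.
\end{itemize}
The output should be a face $\tau'\in Y$ with $\rho_{M'}(V(Y)\setminus\tau')\leq\ell$. Then $\tau'\cup D_v\in X$, and its complement has $M$-rank at most $\ell$, up to a correction coming from vertices outside $Y$. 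Minimality of $t$ then forces $t\leq\ell$, which finishes the proof. The maximality of $\tau$ should be used here: it lets us choose $v$ so that the vertices of $V\setminus D_v$ outside $V(Y)$ lie in $F$ and therefore cost no extra rank.

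\textbf{Main obstacle.} The hard step is Step~3, and specifically verifying that $M'$ is a subcomplex of $Y$. From $M\subseteq X$ we only know that independent sets of $M$ are faces of $X$. We do not know that $I\cup D_v\in X$ for $I$ independent in the contraction, because $D_v$ itself is typically dependent. If this fails, my first fallback is to replace $D_v$ by a basis $A_v$ of $M[D_v]$. Then $M/A_v$ sits inside $\lk(X,A_v)$ automatically. I would run the Kalai--Meshulam argument there and then compare $\lk(X,A_v)$ with $\lk(X,D_v)$ using the link characterization~\eqref{equation characterization of Leray} of the Leray number, $L(K)=\max_\sigma\ell_K(\sigma)$, applied inside $\lk(X,A_v)$ to the face $D_v\setminus A_v$. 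This comparison may only give $L(\lk(X,D_v))\geq t$ for some good $v$ after an additional induction on $r-t$.
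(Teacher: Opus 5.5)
There is a genuine gap in Step~3. The claim you reduce to is not just unproved: it is false for the face you pick in Step~1. Minimality of $t=\rho(V\setminus\tau)$ does not force any good $v$ to satisfy $\delta_X(D_v)\ge t$. Choosing $\tau$ maximal makes this worse, since a facet has $\lk(X,\tau)=\{\varnothing\}$ and so $\delta_X(\tau)=0$.

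Here is a concrete counterexample. Take $d=2$, $V_1=\{a_1,a_2\}$, $V_2=\{b_1,b_2\}$, $V_3=\{c_1,c_2\}$ and $M=V_1*V_2*V_3$. Let $X$ be generated by the four tetrahedra $\{a_1,a_2,b_j,c_k\}$ and the triangle $\{a_1,b_1,b_2\}$.
\begin{itemize}
\item The tetrahedra form the $3$-ball $\{a_1,a_2\}*C_4$, where $C_4$ is the $4$-cycle $b_1c_1b_2c_2$. The triangle meets this ball in the path $b_1a_1b_2$, so $X$ is contractible.
\item Every vertex link is contractible except $\lk(X,a_1)$, which is a cone on $C_4$ plus the edge $b_1b_2$ and has $\tilde H_2=0$. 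Edge links are at most $1$-dimensional. Hence $X$ is $2$-Leray, and it contains $M$.
\item No face contains $V_i\cup V_j$ for $i\neq j$, while $V_1$ is a face, so the minimal value of $t$ is $2$.
\item The facet $\tau=\{a_1,b_1,b_2\}$ attains it, since $\rho(\{a_2,c_1,c_2\})=2$.
\end{itemize}
For this $\tau$ we get $F=V_1\cup V_3$. The only good index is the element $v\in B\cap V_2$, with $D_v=V_2$ and $\lk(X,V_2)=\{\varnothing,\{a_1\}\}$. So $\delta_X(D_v)=0<2=t$.

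The sum for this $B$ is still $0+2-1\ge0$. That holds only because the ``bad'' term $\delta_X(V_1)=L(C_4)=2$, which you bounded by $-1$. Your proposed remedy via maximality also fails here. The vertices $a_2,c_1,c_2$ of $V\setminus D_v$ outside the link do lie in $F$, but they have rank $2=t$. So the ``correction'' is exactly the quantity you were trying to bound.

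The missing idea is to use the obstruction form of Kalai--Meshulam, Lemma~\ref{lem:KM-obstruction}, instead of the weaker Theorem~\ref{thm:KM-intro}. Since $M\subseteq X$, its contrapositive gives a face $\tau$ with $\rho_0:=\rho(V\setminus\tau)\le\ell_X(\tau)$. In the example, $\tau=V_1$ works, with $\ell_X(V_1)=2$; note that such a certificate is typically not a maximal face. Then build $B$ exactly as in your Step~2, which is the paper's construction. Every good $v$ has $D_v\subseteq\tau$, and Lemma~\ref{lem:delta-monotone} gives $\delta_X(D_v)\ge\delta_X(\tau)\ge\ell_X(\tau)\ge\rho_0$. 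This holds for all $r-\rho_0$ good indices at once, so the sum is at least $(r-\rho_0)\rho_0-\rho_0=(r-\rho_0-1)\rho_0\ge0$. No Kalai--Meshulam argument inside the links $\lk(X,D_v)$ is needed.
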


\begin{corollary}\label{cor:top-dimensional-colors}
Let $r$ be an integer with $r\geq d+1$. Let $X$ be a $d$-Leray complex on $V=V_1\sqcup\cdots\sqcup V_r$, with $M=V_1 * \dots * V_r \subseteq X$. Then we have $\sum_{i=1}^r\delta_X(V_i)\geq0$.
\end{corollary}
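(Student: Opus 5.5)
The corollary should follow from Theorem~\ref{thm:top-dimensional} applied to the partition matroid $M=V_1*\cdots*V_r$. The plan has three steps: identify the independent sets and spans of $M$, compute the sets $V\setminus\Span(B\setminus\{v\})$, and substitute into the theorem's inequality.

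First, the setup. The faces of $M=V_1*\cdots*V_r$ are exactly the sets meeting each $V_i$ in at most one element. This is the partition matroid: the exchange axiom holds because a larger independent set must hit some color class that the smaller one misses. Its rank is $r$, since every $V_i$ is nonempty; this is implicit, because otherwise $V_i$ contributes no vertex and $M$ would not have rank $r$. If some $V_i$ is empty we simply have $\delta_X(\varnothing)=L(X)$, but we assume the color classes are nonempty as in the intended setting. Since $M\subseteq X$ and both complexes have vertex set $V$, the hypotheses of Theorem~\ref{thm:top-dimensional} hold with $r\geq d+1$.

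Second, the span computation. Every basis has the form $B=\{b_1,\dots,b_r\}$ with $b_i\in V_i$. For $v=b_i$, the set $B\setminus\{b_i\}$ has rank $r-1$. Adding a vertex $u$ keeps the rank at $r-1$ exactly when $u$ lies in a class $V_j$ with $j\neq i$. Hence $\Span(B\setminus\{b_i\})=V\setminus V_i$, and therefore
\[
V\setminus\Span(B\setminus\{b_i\})=V_i .
\]

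Third, substitution. Theorem~\ref{thm:top-dimensional} provides a basis $B$ with
\[
\sum_{v\in B}\delta_X\paren{V\setminus\Span(B\setminus\{v\})}\geq 0,
\]
and by the second step this sum equals $\sum_{i=1}^r\delta_X(V_i)$, which gives the claim.

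There is no real obstacle. The only step needing care is the span computation, in particular checking that elements of $V_i$ other than $b_i$ are also excluded from the span. This holds because adding any $u\in V_i$ to $B\setminus\{b_i\}$ produces an independent set of size $r$, so the rank increases.
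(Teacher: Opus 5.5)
Your proposal is correct and is exactly the intended derivation: the paper gives no separate proof of the corollary, treating it as the specialization of Theorem~\ref{thm:top-dimensional} to the partition matroid $V_1*\cdots*V_r$, for which $V\setminus\Span(B\setminus\{b_i\})=V_i$ for every basis $B$. Your assumption that the color classes are nonempty matches the intended setting, for example the nonempty families $\cC_i$ in Theorem~\ref{thm:dim-colorful}.
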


We shall discuss later how to obtain Theorem~\ref{thm:dim-colorful} from Corollary~\ref{cor:top-dimensional-colors}. Furthermore, motivated by Theorems~\ref{thm:topo_kim} and~\ref{thm:geom-main}, we considered the following problem.

\begin{conjecture}\label{conjecture: generalization of Kim-Lew's result}
Let $d\geq 1$, $r\geq d+1$, $1\leq m\leq r$, and $m\leq k\leq\min\{m+d,r\}$ be integers. Let $V$ be a finite set with $|V|\geq\max\{m+d,r\}$. Let $X$ be a $d$-Leray simplicial complex on vertex set $V$, and let $M$ be a rank-$r$ matroid on vertex set $V$ with rank function $\rho$.

Assume that, for every $U=\{u_1,\dots,u_{d+1},v_1,\dots,v_{m-1}\}\subseteq V$ with $\rho(U)\geq k$, either $\{u_1,\dots,u_{d+1}\}\in X$ or there exists $j\in[m-1]$ such that $\sigma\cup\{v_j\}\in X$ for every $\sigma\subsetneq\{u_1,\dots,u_{d+1}\}$. Set
\[
p=\begin{cases}
1,&k=1,\\
d+1,&k\geq2.
\end{cases}
\]
Then there exist a basis $B$ of $M$ and $p$ distinct subsets $I_1,\dots,I_p\subseteq B$ with $|I_i|=r-k+1$ for all $i\in[p]$ such that
\[
\sum_{i=1}^{p}\delta_X(V\setminus\Span(B\setminus I_i))\geq0.
\]
\end{conjecture}

However, we found that Conjecture~\ref{conjecture: generalization of Kim-Lew's result} is false in general. We shall show that it holds in the following special cases.

\begin{theorem}\label{thm:m=1ork>=m+d-1}
Conjecture~\ref{conjecture: generalization of Kim-Lew's result} holds if $m=1$ or $k\geq m+d-1$.
\end{theorem}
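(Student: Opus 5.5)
We will prove Theorem~\ref{thm:m=1ork>=m+d-1} by reducing both cases to Theorem~\ref{thm:top-dimensional}, applied to a suitably modified complex and matroid. The guiding principle is that in the case $k\geq2$ we need $p=d+1$ index sets, matching the $r$ summands of Theorem~\ref{thm:top-dimensional} when the rank is exactly $d+1$. In the case $k=1$ we need a single set $I_1=B$, and the claim is just that $\delta_X(V\setminus\Span(\varnothing))\ge 0$, i.e.\ $V\setminus\Span(\varnothing)\in X$. This is the statement of Theorem~\ref{thm:topo_kim} with $k=1$ (loops span nothing, so $\rho(V\setminus\tau)\le 0$ means $V\setminus\tau$ consists of loops). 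Hence for $k=1$ we simply invoke Theorem~\ref{thm:topo_kim}, extended to $d=0$ by a direct check.

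For $k\geq2$, we pass to the truncation-type matroid $N$ on $V$ whose independent sets are those $A$ with $\rho(A)=|A|$ and $|A|\le d+1$, after first contracting. More precisely, we set $s=r-k+1$. The plan is to choose a suitable independent set $J$ of size $k-1-d$ when $k>d+1$, contract it, and then truncate to rank $d+1$. We then work with $X'=\lk(X,J)$ restricted appropriately, which is still $d$-Leray, and we aim to show that the truncated matroid is a subcomplex of $X'$.

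\textbf{Case $m=1$.} The hypothesis says every $(d+1)$-set $U$ with $\rho(U)\ge k$ is a face of $X$. Consequently, every independent set of $M$ of size $\le d+1$ that has rank $\ge k$ lies in $X$. By Leray-ness (faces of size $d+1$ force higher faces via vanishing of homology on links), this should be upgraded to the required containment.

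\textbf{Case $k\geq m+d-1$.} Here only at most one or two ``extra'' vertices $v_j$ interfere. Our plan is to show, by the semi-intersecting-type condition and the Leray property of links, that the bad $(d+1)$-sets are sparse enough. They should be removable by deleting a flat, which explains why $|I_i|=r-k+1$ coordinates are merged.

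Applying Theorem~\ref{thm:top-dimensional} to $(X',N')$ yields a basis $B'$ with $\sum_{v\in B'}\delta_{X'}(V\setminus\Span_{N'}(B'\setminus v))\ge0$. We then lift $B'$ to a basis $B$ of $M$ by adding $r-(d+1)$ elements, and set $I_v=B\setminus(B'\setminus\{v\})$, which has size $r-d$. When $r-d\ne s$ we must instead group the elements differently. We then verify the inequality $\delta_{X'}\le\delta_X$ on corresponding sets, using the fact that links of links are links and that the Leray number is monotone under induced subcomplexes.

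The main obstacle is this translation step: ensuring that $\Span_{N'}$ of $B'\setminus v$ corresponds exactly to $\Span_M(B\setminus I_v)$, and establishing the containment of the truncated matroid in the modified complex from the local hypothesis. This is where the restriction $m=1$ or $k\ge m+d-1$ should be essential. I would first try to handle this containment by induction on $|V|$, deleting a vertex $v_j$ witnessing the failure and using the Kim--Lew argument on the link.
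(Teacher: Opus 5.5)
The $k=1$ case via Theorem~\ref{thm:topo_kim} is fine. For $m=1$, $k=d+1$ your truncation idea also works: every $M$-independent set of size at most $d+1$ extends to an independent $(d+1)$-set, which is a face, so the rank-$(d+1)$ truncation $T$ of $M$ lies in $X$. For an $M$-independent $d$-set $A$ one has $\Span_T(A)=\Span_M(A)$. Extending the basis $B'$ from Theorem~\ref{thm:top-dimensional} to a basis $B$ of $M$, the sets $I_v=B\setminus(B'\setminus\{v\})$ are $d+1$ distinct $(r-d)$-subsets.

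Beyond this, the two steps you flag as ``the main obstacle'' are genuine gaps. For $m=1$ and $2\le k\le d$ you need $|B\setminus I_i|=k-1<d$. A conclusion about the $d$-sets $B'\setminus\{v\}$ does not imply this, however you regroup. Shrinking $B\setminus I$ shrinks its span and enlarges $V\setminus\Span_M(B\setminus I)$, which by Lemma~\ref{lem:delta-monotone} can only lower $\delta_X$. Truncation remembers only that independent $(d+1)$-sets are faces. It discards the stronger hypothesis that every $(d+1)$-set of rank at least $k$ is a face.

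The paper uses exactly that hypothesis through the tolerance matroid $M^t$, $t=d+1-k$. Suppose no face $\tau$ with $\rho_0=\rho_M(V\setminus\tau)\le k-1$ satisfies $(\delta_X(\tau)+1)\binom{r-\rho_0}{q}\ge d+1$. Then $\rho^t(V\setminus\sigma)\ge\ell_X(\sigma)+1$ for all $\sigma\in X$. Lemma~\ref{lem:KM-obstruction}, with Lemma~\ref{lem:Leray-Helly}, then yields a $(d+1)$-set of $M$-rank at least $k$ outside $X$, a contradiction. A single such face $\tau$ suffices by Lemma~\ref{lem:certificate-selection}: extend a basis $J$ of $V\setminus\tau$ to $B$. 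Every $q$-subset of $B\setminus J$ gives a set inside $\tau$, and the weight $\delta_X(\tau)+1$ pays for padding the remaining terms with $-1$'s.

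For $m\ge2$ the Kim--Lew hypothesis is a disjunction, and the containment you want is not available. Take $m=2$, $k=d+1$ (so your $J$ is empty), an $M$-independent $(d+1)$-set $A$, and $X=\{S\subseteq V:A\not\subseteq S\}$. This $X$ is $d$-Leray and satisfies the hypothesis, yet the truncation of $M$ contains $A\notin X$. You give no rule for the ``appropriate restriction'', nor for transporting spans and $\delta$-values back to $(X,M)$ afterwards.

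The missing idea is an induction on $m$ that produces a certificate face rather than a containment:
\begin{enumerate}[label=\textup{(\arabic*)}]
\item Either the condition also holds for $(m-1,k-1)$ and one inducts, or there is a witness $U=A\sqcup\{v_1,\dots,v_{m-2}\}$ with $A\notin X$ and each $v_j$ blocked by some $\sigma_j\subsetneq A$.
\item Since $|U|=d+m-1\le k$, we get $\rho_M(U)\in\{k-1,k\}$. This is precisely where $k\ge m+d-1$ is used.
\item Applying the original hypothesis to $U\cup\{w\}$ forces $\sigma\cup\{w\}\in X$ for every $\sigma\subsetneq A$. This holds for every $w\notin\Span_M(U)$ when $\rho_M(U)=k-1$, and for every $w\notin U$ when $\rho_M(U)=k$.
\item Lemma~\ref{lem:KL-boundary} then gives one of two outcomes. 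If $\rho_M(U)=k-1$, then $W=V\setminus\Span_M(U)$ is a face with $\lk(X,W)[A]$ the boundary of the simplex on $A$, so $\delta_X(W)\ge d$ and $\rho_M(V\setminus W)=k-1$, which feeds Lemma~\ref{lem:certificate-selection}. If $U$ is independent, each $(V\setminus U)\cup\{a\}$ is a face, giving $d+1$ nonnegative terms for $I_a=R\cup\{a\}$ with $R\subseteq B\setminus U$, $|R|=q-1$.
\end{enumerate}
Nothing in your plan plays the role of this dichotomy.
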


Note that when $m=1$ and $k=d+1$, Theorem~\ref{thm:m=1ork>=m+d-1} implies Theorem~\ref{thm:KM-intro}. Indeed, if $r:=\rho_M(V)\le d$, we may take $\tau=\varnothing$. Otherwise, the inclusion $M\subseteq X$ ensures that every $(d+1)$-set of rank $d+1$ is a face of $X$, so the hypothesis holds with $m=1$ and $k=d+1$. The nonnegative sum in the conclusion yields a set $I\subseteq B$ of size $r-d$ such that $\delta_X(V\setminus\Span_M(B\setminus I))\ge0$. Thus, $\tau:=V\setminus\Span_M(B\setminus I)$ is a face of $X$, and $\rho_M(V\setminus\tau)=\rho_M(\Span_M(B\setminus I))=|B\setminus I|=d$, as required.

We also give an example showing that, even under a stronger condition, Conjecture~\ref{conjecture: generalization of Kim-Lew's result} is false in general. Following Remark~1.8 in~\cite{Kim_Lew_2024}, we say that a pair $(X,M)$ on a common vertex set $V$, where $\rho$ is the rank function of $M$, satisfies the \emph{strengthened Kim--Lew condition} for the parameters $(d,m,k)$ if, for every $U\subseteq V$ with $|U|=d+m$ and $\rho(U)\geq k$,
\begin{equation}\label{eq:strong-KL-condition}
    \bigl|\{A\in \tbinom{U}{d+1}:A\notin X\}\bigr|\leq m-1.
\end{equation}

The following theorem gives a counterexample to Conjecture~\ref{conjecture: generalization of Kim-Lew's result} and, at the same time, determines the optimal number of subsets in the two positive cases covered by Theorem~\ref{thm:m=1ork>=m+d-1}.

\begin{theorem}\label{theorem:counterexample-and-sharpness}
The following statements hold.
\begin{enumerate}[label=\textup{(\roman*)}]
    \item For every $d\geq 2$, there exist a $d$-Leray simplicial complex $X$ and a rank-$r$ matroid $M$, where $r=d+1$, such that the strengthened Kim--Lew condition holds for $m=k=2$, but, for every basis $B$ of $M$,
    \begin{equation}\label{eq:counterexample-total}
        \sum_{I\in\binom{B}{d}}
        \delta_X\bigl(V\setminus\Span(B\setminus I)\bigr)=-1.
    \end{equation}
    In particular, the conclusion of Conjecture~\ref{conjecture: generalization of Kim-Lew's result} fails, even under the strengthened condition~\eqref{eq:strong-KL-condition}.

    \item Let $d\geq 1$, $r\geq d+1$, and $2\leq k\leq r$, and set $s=r-k+1$. There exist a $d$-Leray simplicial complex $X$ and a rank-$r$ matroid $M$ such that, for every basis $B$ of $M$ and every $1\leq \ell\leq \binom{r}{s}$,
    \begin{equation}\label{eq:exact-sharpness-profile}
        \max_{\substack{
            \mathcal I\subseteq\binom{B}{s}\\
            |\mathcal I|=\ell}}
        \ \sum_{I\in\mathcal I}
        \delta_X\bigl(V\setminus\Span(B\setminus I)\bigr)
        =d+1-\ell.
    \end{equation}
    Moreover, the construction may be chosen so that:
    \begin{enumerate}[label=\textup{(\alph*)}]
        \item if $2\leq k\leq d+1$, it satisfies the hypothesis of Theorem~\ref{thm:m=1ork>=m+d-1} with $m=1$;
        \item if $k\geq m+d-1$, it satisfies the hypothesis of Theorem~\ref{thm:m=1ork>=m+d-1} with $k\geq m+d-1$.
    \end{enumerate}
\end{enumerate}
Consequently, whenever at least $d+2$ pairwise distinct subsets of the required size are available, $d+1$ is the largest number for which a nonnegative sum can be guaranteed in either partial theorem: the maximum in \eqref{eq:exact-sharpness-profile} is $0$ for $\ell=d+1$ and is $-1$ for $\ell=d+2$.
\end{theorem}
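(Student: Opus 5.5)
Both parts are constructions. In each, the matroid should be simple enough that every basis behaves the same way and the sets $V\setminus\Span(B\setminus I)$ can be read off directly. The complex $X$ should then be built so that the values $\delta_X$ on these sets form a prescribed profile. For the upper bounds I will use \eqref{equation characterization of Leray}, together with the fact that Leray numbers add under joins and are unchanged by adding isolated vertices or taking disjoint unions (when $d\ge1$). For the exact values of $\delta_X$ I will compute links directly.

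\emph{Part (ii).} The target profile is that, for every basis $B$, exactly one $s$-subset $I_0\subseteq B$ has $\delta_X(V\setminus\Span(B\setminus I_0))=d$, and every other $s$-subset gives $-1$. Then the best choice of $\ell$ subsets contains $I_0$ and has sum $d-(\ell-1)=d+1-\ell$, which is \eqref{eq:exact-sharpness-profile}. The final sentence of the theorem then follows by putting $\ell=d+1,d+2$.

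To realise this profile, I take $M$ to be a partition matroid whose $r$ parallel classes are $V_1,\dots,V_r$. For every basis $B$ and every $I\subseteq B$, the set $V\setminus\Span(B\setminus I)$ is then the union of the classes met by $I$. The problem therefore reduces to a colored complex $X$ on $V_1\sqcup\dots\sqcup V_r$ in which exactly one union of $s$ classes, say $V_{I_0}$, is a face, and $\lk(X,V_{I_0})$ has Leray number exactly $d$.

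I will take $V_{I_0}$ to be a full simplex and let the classes outside $I_0$ carry a copy of $\partial\Delta^d$. This can be done by giving one class $d+1$ vertices, or by spreading these vertices over several classes. I will then add all further faces demanded by the hypothesis of Theorem~\ref{thm:m=1ork>=m+d-1}:
\begin{itemize}
\item in case (a), every $(d+1)$-set of rank $\ge k$;
\item in case (b), the analogous semi-intersecting sets.
\end{itemize}
These additions must not create a new face that is a union of $s$ classes and must not push the Leray number above $d$. A natural model is geometric. I would take convex sets in $\R^d$: the classes in $I_0$ are copies of a single point $x$, and the other classes consist of $d+1$ halfspaces whose common intersection is $\{x\}$ minus a small perturbation. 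The nerve of such a family is automatically $d$-Leray, and Theorem~\ref{thm:geom-main} indicates that exactly the needed intersections survive.

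\emph{Part (i).} Here $r=d+1$ and $s=d$. For each $I$, the set $B\setminus I$ is a single vertex, so the sum in \eqref{eq:counterexample-total} is over the $d+1$ sets $V\setminus\Span(\{b\})$ with $b\in B$. The aim is that, for every basis, all but one of these are non-faces and the remaining one is a face whose link is $0$-Leray (for instance a cone), so that the total is $-1$.

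I would try a uniform-type matroid of rank $d+1$ with parallel classes. The complex $X$ would be a union of simplices glued along a sphere-free pattern, so that $d$-Lerayness is checked via \eqref{equation characterization of Leray}. Its non-faces are chosen so that every $(d+2)$-set of rank $\ge2$ misses at most one $(d+1)$-subset, which is condition \eqref{eq:strong-KL-condition} with $m=2$. I will verify the failure by checking one representative basis, using symmetry to cover the others.

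The main obstacle, in both parts, is reconciling the covering hypothesis with the sparse face structure required. The hypothesis forces many $(d+1)$-sets to be faces, and these faces tend either to make additional unions of classes into faces or to create homology in dimension $d$. I would settle this first by small cases ($d=1,2$) and then generalise the pattern.
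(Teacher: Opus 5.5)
\textbf{Part (ii).} Your reduction to a partition matroid and your target profile are correct. The profile is one $s$-subset $I_0$ with value $d$ and every other $s$-subset with value $-1$. However, the construction of $X$, which is the whole content of the proof, is left open, and the concrete candidates you name do not work.

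If the classes in $I_0$ are copies of a point $x$, then Observation~\ref{ob:dim-delta} gives $\delta_{N}(V_{I_0})\le\dim\{x\}=0$. You would then get $1-\ell$ instead of $d+1-\ell$. Moreover, the perturbed halfspaces have empty common intersection, so one of them misses $x$. A $(d+1)$-set of rank at least $k$ containing that halfspace and a copy of $\{x\}$ is then a nonface, which violates the hypothesis in (a).

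A single copy of $\partial\Delta^d$ on the classes outside $I_0$, whether in one class or spread over several, handles only $k=2$. For $k\ge3$, every union of $s$ classes that uses a different class outside $I_0$ must also be a nonface. Nothing in your plan supplies these nonfaces while keeping every $(d+1)$-set of rank $\ge k$ a face.

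The missing idea is to put all the badness into a set $E$ of rank $k-1$ and take $X=K_d(E)*2^Q=\{S:|S\cap E|\le d\}$. The minimal nonfaces of this complex are exactly the $(d+1)$-subsets of $E$. This choice gives three things at once:
\begin{enumerate}[label=\textup{(\arabic*)}]
\item $X$ is $d$-Leray, because an induced subcomplex meeting $Q$ is a cone and any other has dimension at most $d-1$.
\item $\lk(X,Q)=K_d(E)$ has Leray number exactly $d$.
\item Hypotheses (a) and (b) hold automatically, because a nonface $(d+1)$-set lies in $E$ and so has rank at most $k-1$. In (b), $\rho(U)\ge k$ then forces some $v_j\in Q$, and $\sigma\cup\{v_j\}$ has at most $d$ vertices in $E$.
\end{enumerate}
In your language, let $Q$ consist of $s$ singleton classes and split $E$ into $k-1$ classes of size $d+1$. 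Every $s$-union other than $Q$ then contains a whole $E$-class and is a nonface. The paper instead takes $|E|=d+k-1$ and $M=U_{k-1,E}\oplus U_{s,Q}$. With $B=D\sqcup Q$, the set $V\setminus\Span(B\setminus I)$ then has $d+|I\cap D|>d$ vertices in $E$ whenever $I\ne Q$. A geometric model also exists, but the $Q$-sets must be copies of $\R^d$ and $E$ must consist of hyperplanes in general position. Also note that over $\mathbb Z$, Leray numbers are not additive under joins in general because of Tor terms; you only need the case of a join with a simplex.

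\textbf{Part (i).} Your target profile is off: $d$ nonfaces and one face of value $0$ sum to $-d$, not $-1$. You need exactly one nonface and $d$ faces of value $0$. Moreover, a cone need not be $0$-Leray; for instance, the cone over $\partial\Delta^d$ has Leray number $d$. A complex has Leray number $0$ only if it is a full simplex, e.g.\ the link $\{\varnothing\}$ of a facet.

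Again, no construction is given. The paper's example is the same join. Take $V=E\sqcup\{q\}$ with $|E|=d+1$, let $X=K_d(E)*2^{\{q\}}$ (the cone over $\partial\Delta^d$), and let $M=U_{d,E}\oplus U_{1,\{q\}}$. Condition~\eqref{eq:strong-KL-condition} is immediate, since $V$ is the only $(d+2)$-set and $E$ is its only nonface $(d+1)$-subset. Every basis has the form $B=D\sqcup\{q\}$:
\begin{enumerate}[label=\textup{(\arabic*)}]
\item The subset $I=D$ gives $V\setminus\Span(\{q\})=E\notin X$, which has value $-1$.
\item For $x\in D$, the hypothesis $d\ge2$ ensures that $\{x\}$ spans only itself in $U_{d,E}$. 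Hence $V\setminus\Span(\{x\})=V\setminus\{x\}$ is a facet with link $\{\varnothing\}$, which has value $0$.
\end{enumerate}
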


Thus, the picture is complete in the following sense. The proposed extension is false in the full range of parameters, even under the stronger condition from Remark~1.8 in~\cite{Kim_Lew_2024}; on the other hand, in each of the two ranges where the extension is valid, the number $d+1$ in the conclusion is optimal.

\section{Preliminaries and auxiliary properties}\label{sec:prelim}

This section collects the standard facts, technical tools used in the proofs, and the proof of Theorem~\ref{thm:dim-colorful} using Corollary~\ref{cor:top-dimensional-colors}.

\subsection{Simplicial complexes and Leray properties}

We first record two elementary set-theoretic identities for links.

\begin{lemma}\label{lem:link-identities}
Let $X$ be an abstract simplicial complex with vertex set $V$.
If $\tau\in X$ and $\sigma \subseteq \tau$, then $\lk(X,\tau)=\lk(\lk(X,\sigma),\tau\setminus\sigma)$.
\end{lemma}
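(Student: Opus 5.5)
The plan is to prove the identity by showing that each side is contained in the other, working directly from the definition of the link. First I check that the right-hand side is well defined. Since $\sigma\subseteq\tau\in X$, the set $\sigma$ is a face of $X$, so $\lk(X,\sigma)$ is defined. Moreover, $\tau\setminus\sigma$ is disjoint from $\sigma$ and $(\tau\setminus\sigma)\cup\sigma=\tau\in X$. Hence $\tau\setminus\sigma\in\lk(X,\sigma)$, and the iterated link $\lk(\lk(X,\sigma),\tau\setminus\sigma)$ makes sense.

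Next I unwind the membership condition for a set $\eta$ on the right-hand side. By definition, $\eta\in\lk(\lk(X,\sigma),\tau\setminus\sigma)$ means two things: $\eta\cap(\tau\setminus\sigma)=\varnothing$, and $\eta\cup(\tau\setminus\sigma)\in\lk(X,\sigma)$. The second condition in turn says that $(\eta\cup(\tau\setminus\sigma))\cap\sigma=\varnothing$ and $\eta\cup(\tau\setminus\sigma)\cup\sigma=\eta\cup\tau\in X$.

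The disjointness conditions combine as follows. The set $\tau\setminus\sigma$ is automatically disjoint from $\sigma$. So the requirements reduce to $\eta\cap(\tau\setminus\sigma)=\varnothing$ together with $\eta\cap\sigma=\varnothing$, which is exactly $\eta\cap\tau=\varnothing$. The union condition is exactly $\eta\cup\tau\in X$. Therefore the membership condition for the right-hand side is equivalent to $\eta\cap\tau=\varnothing$ and $\eta\cup\tau\in X$, which is the defining condition for $\eta\in\lk(X,\tau)$. This gives both inclusions at once.

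There is no real obstacle here. The only point needing care is the bookkeeping of the disjointness conditions, in particular confirming that the condition on $\sigma$ is recovered from the inner link rather than lost.
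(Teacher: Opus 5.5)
Your proposal is correct and follows essentially the same route as the paper: both unwind the definition of the iterated link into a chain of equivalences, using $\sigma\subseteq\tau$ to merge the two disjointness conditions into $\eta\cap\tau=\varnothing$ and the union condition into $\eta\cup\tau\in X$. Your extra remark that $\tau\setminus\sigma\in\lk(X,\sigma)$, so the iterated link is well defined, is a small addition the paper leaves implicit.
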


\begin{proof}
Using the definition of the link and $\sigma \subset \tau\in X$, we get the following sequence of equivalences
\begin{align*}
    \eta\in \lk (\lk(X,\sigma), \tau\setminus \sigma)&\Longleftrightarrow \eta\cap (\tau\setminus \sigma)=\varnothing, \eta \cup (\tau\setminus \sigma)\in \lk(X, \sigma)\\
    &\Longleftrightarrow \eta\cap(\tau\setminus\sigma)=\varnothing, (\eta\cup (\tau\setminus \sigma))\cap \sigma=\varnothing, (\eta\cup (\tau\setminus \sigma))\cup \sigma\in X\\
    &\Longleftrightarrow \eta \cap \tau=\varnothing,  \eta\cup \tau \in X\\
    &\Longleftrightarrow \eta \in \lk(X, \tau),
\end{align*}
which finishes the proof.
\end{proof}

\begin{lemma}\label{lem:delta-monotone}
Let $X$ be an abstract simplicial complex. If $\sigma\subseteq\tau\in X$, then $\delta_X(\sigma)\geq\delta_X(\tau)\geq\ell_X(\tau)$.
\end{lemma}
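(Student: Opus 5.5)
We prove the two inequalities separately, both directly from the definitions. Since $\sigma\subseteq\tau\in X$ and $X$ is closed under taking subsets, $\sigma\in X$. Hence
\[
\delta_X(\sigma)=L(\lk(X,\sigma))\quad\text{and}\quad\delta_X(\tau)=L(\lk(X,\tau)).
\]

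For the first inequality, Lemma~\ref{lem:link-identities} gives $\lk(X,\tau)=\lk(\lk(X,\sigma),\tau\setminus\sigma)$, and $\tau\setminus\sigma$ is a face of $Y:=\lk(X,\sigma)$. It therefore suffices to show that $L(\lk(Y,\eta))\le L(Y)$ for every face $\eta\in Y$. By \eqref{equation characterization of Leray} applied to $\lk(Y,\eta)$, this reduces to bounding $\ell_{\lk(Y,\eta)}(\xi)$ for faces $\xi\in\lk(Y,\eta)$. Applying Lemma~\ref{lem:link-identities} once more, in the form $\lk(\lk(Y,\eta),\xi)=\lk(Y,\eta\cup\xi)$ with $\eta\cup\xi\in Y$ and $\eta\subseteq\eta\cup\xi$, we get
\[
\ell_{\lk(Y,\eta)}(\xi)=\ell_Y(\eta\cup\xi)\le \max_{\zeta\in Y}\ell_Y(\zeta)=L(Y),
\]
again by \eqref{equation characterization of Leray}. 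Taking the maximum over $\xi$ yields $L(\lk(Y,\eta))\le L(Y)$, that is, $\delta_X(\tau)\le\delta_X(\sigma)$.

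For the second inequality, set $Z:=\lk(X,\tau)$. We need $L(Z)\ge \ell_X(\tau)$. By definition, $\ell_X(\tau)$ depends only on the homology of $\lk(X,\tau)=Z$; it equals $\ell_Z(\varnothing)$, since $\lk(Z,\varnothing)=Z$. Using \eqref{equation characterization of Leray} with $\varnothing\in Z$, we obtain $L(Z)\ge\ell_Z(\varnothing)=\ell_X(\tau)$.

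The main point requiring care is a convention issue. $L$ is defined with $j\ge0$, while $\ell$ allows $j=-1$. When $Z=\{\varnothing\}$, we have $\widetilde H_{-1}(Z)\neq0$, so $\ell_Z(\varnothing)=0$, which is consistent with $L(\{\varnothing\})\ge 0$. When $Z$ has a vertex, $\widetilde H_{-1}=0$ and nothing changes. In either case \eqref{equation characterization of Leray} is being applied to a (non-void) complex exactly as cited, so no extra argument is needed beyond checking these edge cases.
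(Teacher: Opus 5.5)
Your proof is correct and follows essentially the same route as the paper. Both arguments set $Y=\lk(X,\sigma)$ and use Lemma~\ref{lem:link-identities} to identify $\lk(\lk(Y,\eta),\xi)$ with $\lk(Y,\eta\cup\xi)$, so that each $\ell$-value in the smaller link equals an $\ell$-value in $Y$ and the maximum in \eqref{equation characterization of Leray} can only drop. The second inequality comes from evaluating that characterization at $\varnothing\in\lk(X,\tau)$; your extra check of the $j\geq0$ versus $j\geq-1$ convention is a harmless sanity check that the paper does not need.
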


\begin{proof}
Set
\[
Y=\lk(X,\sigma),\qquad
\eta=\tau\setminus\sigma,\qquad
Z=\lk(X,\tau).
\]
By Lemma~\ref{lem:link-identities}, we have
\[
Z=\lk(X,\tau)=\lk(\lk(X,\sigma),\tau\setminus \sigma)=\lk(Y,\eta).
\]
Hence, for every $\rho\in Z$, we have $\eta\cap \rho=\varnothing$ and $\eta\cup\rho\in Y$. Applying
Lemma~\ref{lem:link-identities} to
$\eta\subseteq\eta\cup\rho\in Y$ gives
\[
\lk(Y,\eta\cup\rho)
=\lk\bigl(\lk(Y,\eta),\rho\bigr)
=\lk(Z,\rho).
\]
It follows from~\eqref{equation definition of ell} that
\[
\ell_Y(\eta\cup\rho)=\ell_Z(\rho).
\]
Therefore, using~\eqref{equation characterization of Leray}, we obtain
\begin{align*}
\delta_X(\sigma)
&=L(Y)
 =\max_{\gamma\in Y}\ell_Y(\gamma)\\
&\geq \max_{\rho\in Z}\ell_Y(\eta\cup\rho)
 =\max_{\rho\in Z}\ell_Z(\rho)\\
&=L(Z)
 =\delta_X(\tau).
\end{align*}
Finally, since $\varnothing\in Z$, we have
\[
\delta_X(\tau)
=L(Z)
=\max_{\rho\in Z}\ell_Z(\rho)
\geq \ell_Z(\varnothing)
=\ell_X(\tau),
\]
which finishes the proof.
\end{proof}

We use the following Helly-type theorem for $d$-Leray complexes; see Lemma~5.3 in~\cite{Kim_Lew_2024}.

\begin{lemma}\label{lem:Leray-Helly}
Let $X$ be a $d$-Leray complex with vertex set $V$, and let $U\subseteq V$. If any subset of $U$ of size at most $d+1$ is a face of $X$, then $U$ is also its face.
\end{lemma}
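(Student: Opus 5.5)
Proof proposal for Lemma~\ref{lem:Leray-Helly}. I would argue by induction on $|U|$. If $|U|\leq d+1$, the conclusion is the hypothesis itself, so assume $|U|\geq d+2$. Every proper subset $W\subsetneq U$ inherits the hypothesis, so by induction every proper subset of $U$ is a face of $X$.

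Suppose, for contradiction, that $U\notin X$. Then the induced subcomplex $X[U]$ consists of all proper subsets of $U$. Hence it is the boundary of the simplex on $U$, which is a triangulated sphere of dimension $|U|-2$. Its reduced homology satisfies
\[
\widetilde H_{|U|-2}(X[U])\cong\mathbb Z\neq0 .
\]
Since $|U|\geq d+2$, we have $|U|-2\geq d$. On the other hand, $L(X)\leq d$ means $\widetilde H_i(X[W])=0$ for every $W\subseteq V$ and every $i\geq d$. Taking $W=U$ and $i=|U|-2$ gives $\widetilde H_{|U|-2}(X[U])=0$, a contradiction. Therefore $U\in X$.

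The only nonformal input is the standard fact that the boundary of a simplex on $n\geq 2$ vertices has nonvanishing reduced homology in degree $n-2$, for example from the simplicial chain complex. The degree bookkeeping is where care is needed: $|U|\geq d+2$ must give homological degree $\geq d$, which is exactly the range the Leray condition kills. That is the one point to check.
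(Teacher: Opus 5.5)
Your proof is correct: when $U\notin X$ but every proper subset of $U$ is a face, $X[U]$ is the boundary of the simplex on $U$, whose reduced homology is nonzero in degree $|U|-2\geq d$, and the $d$-Leray condition forbids that. The paper gives no proof of this lemma and simply cites Lemma~5.3 of Kim and Lew; your argument is the standard self-contained one, equivalent to taking a minimal non-face inside $U$.
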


We also use the following boundary lemma due to Kim and Lew; see Lemma 5.4 in~\cite{Kim_Lew_2024}.

\begin{lemma}\label{lem:KL-boundary}
Let $X$ be a $d$-Leray complex on $V$, and let $A\subseteq V$ have size $d+1$ and satisfy $A\notin X$. Put
\[
U=\set{v\in V\setminus A:
\sigma\cup\{v\}\in X
\text{ for every }\sigma\in\tbinom{A}{d}}.
\]
If $U\neq\varnothing$, then $U\cup(A\setminus\{a\})\in X$ for every $a\in A$.
\end{lemma}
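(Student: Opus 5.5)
The proposal is to reduce the statement to Lemma~\ref{lem:Leray-Helly} and then verify the small faces by a minimal-nonface argument that uses the vanishing of homology in degrees $\ge d$. I have not closed the main homological step; the plan for it is given in the last paragraph.

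\textbf{Setup.} Since $U\neq\varnothing$, pick any $v\in U$. Every $\sigma\in\binom{A}{d}$ satisfies $\sigma\subseteq\sigma\cup\{v\}\in X$, so $\sigma\in X$. Together with $A\notin X$, this gives that $X[A]$ is the boundary of the $d$-simplex on $A$. Hence $\widetilde H_{d-1}(X[A])\cong\mathbb Z$.

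We also need that links of $X$ are $d$-Leray. For $\tau\in X$ and $\rho\in\lk(X,\tau)$, Lemma~\ref{lem:link-identities} gives $\lk(\lk(X,\tau),\rho)=\lk(X,\tau\cup\rho)$. So every $\ell$-value of $\lk(X,\tau)$ is an $\ell$-value of $X$, and \eqref{equation characterization of Leray} gives $L(\lk(X,\tau))\le L(X)\le d$.

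\textbf{Reduction.} Fix $a\in A$. By Lemma~\ref{lem:Leray-Helly} applied to $U\cup(A\setminus\{a\})$, it suffices to show that every $S=T\cup W$ with $T\subseteq A\setminus\{a\}$, $W\subseteq U$ and $|S|\le d+1$ is a face of $X$. If $W=\varnothing$, then $|T|\le d$ and $T$ lies in a $d$-subset of $A$, so $T\in X$. If $|W|=1$, say $W=\{w\}$, then $|T|\le d$, and $T\cup\{w\}$ lies in $\sigma\cup\{w\}\in X$ for some $\sigma\in\binom{A}{d}$ containing $T$; such a $\sigma$ exists because $|T|\le d$. So the real content is $|W|\ge 2$, and I would induct on $|W|$.

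\textbf{Inductive step.} Suppose $S=T\cup W$ is a minimal nonface of this form, with $|W|\ge2$ and $|S|\le d+1$. Then all proper subsets of $S$ are faces, so $X[S]$ is the boundary of a simplex and carries a nonzero class in degree $|S|-2\le d-1$. The aim is to push this class into degree $\ge d$ inside some induced subcomplex of a link, where it must vanish.

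The concrete plan is to pick $w\in W$ and pass to $Y=\lk(X,T)$, which is $d$-Leray and in which the vertices of $W\setminus\{w\}$ and $w$ are all present. Then I would apply a Mayer--Vietoris sequence to the induced subcomplex of $Y$ on $(A\setminus T)\cup W$. I would decompose it as the part not using $w$ together with the star of $w$ restricted to this vertex set. The intersection of these two pieces is controlled by the induction hypothesis, which applies to the smaller set $W\setminus\{w\}$. Each $w\in U$ cones off the sphere $X[A]$, because $\sigma\cup\{w\}\in X$ for every $\sigma\in\binom{A}{d}$. Combining this coning with the missing face $S$ should produce a nonzero homology class in degree $\ge d$ in an induced subcomplex of a $d$-Leray complex, which is a contradiction. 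At least $2$ vertices of $A$ lie outside $T$, because $|T|\le d-1$ here.

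\textbf{Main obstacle.} The main obstacle is this degree bookkeeping: making the Mayer--Vietoris connecting map raise the degree of the $(|S|-2)$-sphere class up to degree $\ge d$. If the direct Mayer--Vietoris route fails, I would instead argue by induction on $d$ via links of single vertices of $W$, using that the boundary-of-$A$ hypothesis is inherited in $\lk(X,w)$ for $w\in U$.
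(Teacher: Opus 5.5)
Your preliminary steps are correct: the boundary of the simplex on $A$ lies in $X$, links of $X$ are $d$-Leray, and the faces with $|W|\le 1$ are handled properly. However, the case $|W|\ge 2$ is the whole content of the lemma, and you leave it unproved. So there is a genuine gap. (The paper does not prove this lemma either; it quotes Lemma~5.4 of Kim and Lew.)

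Moreover, the route you sketch cannot close the gap. Every face of $\lk(X,T)[(A\setminus T)\cup W]$ misses a vertex of $W$, because $S=T\cup W\notin X$, and misses a vertex of $A\setminus T$, because $A\notin X$. So this complex has dimension at most $|W|+d-|T|-2$, which is at most $d-1$ whenever $|W|\le|T|+1$, e.g.\ $|W|=2$, $|T|=1$. No Mayer--Vietoris decomposition of it can produce homology in degree $\ge d$, and $\lk(X,T)$ is only known to be $d$-Leray. In short, linking over vertices of $A$ lowers the dimension of the available sphere by $|T|$. In addition, your induction only provides faces of size at most $d+1$, whereas a sphere of dimension $\ge d$ on $A\cup W$ needs faces of size up to $|W|+d-1$.

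The missing idea is to strengthen the induction and to stay in $X$. Prove by induction on $|W|$ that $W\cup\sigma\in X$ for every $W\subseteq U$ and every $\sigma\in\binom{A}{d}$. Your setup covers $|W|\le 1$, and taking $W=U$, $\sigma=A\setminus\{a\}$ gives the lemma, so Lemma~\ref{lem:Leray-Helly} is not needed.

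For $|W|\ge 2$, put $Z=X[A\cup W]$, $\partial\Delta_A=2^A\setminus\{A\}$, $\partial\Delta_W=2^W\setminus\{W\}$, and $L=\{\alpha\subseteq A:\alpha\cup W\in X\}$. Since $A\notin X$, $L$ is a possibly void subcomplex of $\partial\Delta_A$. The induction hypothesis gives $Z=C\cup D$ with $C=\partial\Delta_W*\partial\Delta_A$, $D=2^W*L$, and $C\cap D=\partial\Delta_W*L$. Here $C$ is a sphere of dimension $n=|W|+d-2\ge d$.

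Suppose $L\neq\partial\Delta_A$. Either $L$ is void and $Z=C$, or $L$ misses some $d$-subset of $A$. In the second case, $\widetilde H_n(C\cap D)\cong\widetilde H_{d-1}(L)=0$, since joining with the $(|W|-2)$-sphere $\partial\Delta_W$ shifts degrees by $|W|-1$. Mayer--Vietoris then embeds $\widetilde H_n(C)\cong\mathbb Z$ into $\widetilde H_n(Z)$. In both cases $\widetilde H_n(Z)\neq 0$ with $n\ge d$, contradicting that $X$ is $d$-Leray. Hence $L=\partial\Delta_A$, which is the inductive step.

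Equivalently, you may link over $W\setminus\{u_1,u_2\}\subseteq U$ rather than over a subset of $A$. This reduces to two new vertices, and the relevant sphere becomes the suspension of $\partial\Delta_A$, of dimension exactly $d$.
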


\subsection{Proof of Theorem~\ref{thm:dim-colorful}}
Recall that the \textit{nerve} of a finite family $\cF$ is the abstract simplicial complex $N(\cF)=\{\cA\subseteq\cF:\bigcap\cA\neq\varnothing\}$. It is known that the nerve of a finite family of convex sets in $\R^d$ is $d$-Leray; see, for example, \cite{wegner1975d} and Section~2 in~\cite{tancer2012intersection}. Then it follows the following observation for the dimension and Definition~\ref{def:link-Leray-dimension}.

\begin{observation}\label{ob:dim-delta}
    Let $\cF$ be a finite family of convex sets in $\mathbb R^d$. For any $\cF' \subseteq \cF$, we have $\dim \left(\bigcap \cF' \right) \geq \delta_{N(\cF)}(\cF')$.
\end{observation}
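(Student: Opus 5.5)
We want to show that $\dim(\bigcap\cF')\geq\delta_{N(\cF)}(\cF')$ for every $\cF'\subseteq\cF$. Write $X=N(\cF)$. If $\cF'\notin X$, then $\bigcap\cF'=\varnothing$, and both sides equal $-1$ by the convention $\dim\varnothing=-1$ and Definition~\ref{def:link-Leray-dimension}. So we may assume $\cF'\in X$, that is, $K:=\bigcap\cF'\neq\varnothing$, and set $e=\dim K\geq0$. We must then show $L(\lk(X,\cF'))\leq e$.

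The key step is to identify the link with a nerve of convex sets inside the affine hull of $K$. By definition of the nerve, a subfamily $\cA\subseteq\cF\setminus\cF'$ lies in $\lk(X,\cF')$ if and only if $\bigcap(\cA\cup\cF')\neq\varnothing$, i.e. if and only if $\bigcap_{C\in\cA}(C\cap K)\neq\varnothing$. Hence $\lk(X,\cF')$ coincides, as an abstract complex on the vertex set $\cF\setminus\cF'$, with the nerve of the indexed family $\{C\cap K\}_{C\in\cF\setminus\cF'}$. Two points need care here. First, some sets $C\cap K$ may be empty; these correspond to non-vertices of the link, which is harmless for Leray numbers since the induced subcomplexes are unchanged. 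Second, distinct $C$ may give equal sets $C\cap K$; the nerve should therefore be taken of the indexed family, and the nerve theorem applies equally well to families with repeated members.

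Each set $C\cap K$ is convex and contained in $K$, and $K$ lies in its affine hull, which is an affine space of dimension $e$ and can be identified with $\R^e$. By the cited result that nerves of finite families of convex sets in $\R^e$ are $e$-Leray (\cite{wegner1975d}, \cite{tancer2012intersection}), $L(\lk(X,\cF'))\leq e$. This gives $\delta_X(\cF')\leq\dim\bigcap\cF'$.

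The only slightly delicate point, and the main obstacle, is the reduction to $\R^e$ together with the bookkeeping for empty or repeated sets. We handle it by noting that Leray numbers depend only on induced subcomplexes, and that the nerve theorem (hence the Leray bound) holds for indexed families, so duplicates cause no problem. The case $e=0$ is consistent: $K$ is a point, every nonempty $C\cap K$ equals that point, the link is a full simplex on its vertices (or the empty complex), and its Leray number is $0$.
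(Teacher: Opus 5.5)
Your proposal is correct and follows the paper's proof essentially verbatim: you dispose of the nonface case via $\dim\varnothing=-1$, identify $\lk(N(\cF),\cF')$ with the nerve of $\{C\cap\bigcap\cF'\}_{C\in\cF\setminus\cF'}$ (empty members discarded), and apply the $e$-Leray property of nerves of convex sets in the affine hull of $\bigcap\cF'\cong\R^e$. Your extra remarks on empty and repeated members are harmless bookkeeping that the paper leaves implicit.
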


\begin{proof}
    If $\bigcap\cF'=\varnothing$, then $\cF'\notin N(\cF)$, and hence $\delta_{N(\cF)}(\cF')=-1=\dim\paren{\bigcap\cF'}$. Thus, we may assume that $\bigcap\cF'\neq\varnothing$. Put $t=\dim\paren{\bigcap\cF'}$.

By the definitions of the nerve and the link, for every $\cA\subseteq\cF\setminus\cF'$, we have
\[
\cA\in\lk(N(\cF),\cF')
\quad\Longleftrightarrow\quad
\cA\cup\cF'\in N(\cF)
\quad\Longleftrightarrow\quad
\paren{\bigcap\cF'}\cap\paren{\bigcap\cA}\neq\varnothing.
\]
Consequently, $\lk(N(\cF),\cF')$ is the nerve of the family $\bigl\{ F\cap\bigcap\cF':F\in\cF\setminus\cF' \bigr\}$, after discarding its empty members. All members of this family are convex and lie in the affine hull of $\bigcap\cF'$, which can be identified with $\R^t$. Hence, $\lk(N(\cF),\cF')$ is $t$-Leray. By Definition~\ref{def:link-Leray-dimension}, we obtain
\[
\delta_{N(\cF)}(\cF')
=L\bigl(\lk(N(\cF),\cF')\bigr)
\leq t
=\dim\paren{\bigcap\cF'},
\]
which finishes the proof.
\end{proof}

Then, we can give the proof of Theorem~\ref{thm:dim-colorful} using Corollary~\ref{cor:top-dimensional-colors}.

\begin{proof}[Proof of Theorem~\ref{thm:dim-colorful}]
Let $\cC=\cC_1\sqcup\cdots\sqcup\cC_{d+1}$. By the assumption, we have $\cC_1*\cdots*\cC_{d+1}\subseteq N(\cC)$. Since $N(\cC)$ is $d$-Leray, we may apply Corollary~\ref{cor:top-dimensional-colors} with $r=d+1$. Together with Observation~\ref{ob:dim-delta}, this gives
\[
\sum_{i=1}^{d+1}\dim\paren{\bigcap\cC_i}
\geq \sum_{i=1}^{d+1}\delta_{N(\cC)}(\cC_i)
\geq 0,
\]
which finishes the proof.
\end{proof}

\subsection{Matroids and tolerance complexes}

We first recall two standard matroid constructions. Let $a\in\N$ and let $E$ be a set. The uniform matroid $U_{a,E}$ has ground set $E$ and contains all subsets of $E$ of size at most $a$. Given matroids $M_1$ and $M_2$, the direct sum $M_1\oplus M_2$ is the matroid whose ground set is the disjoint union of $V(M_1)$ and $V(M_2)$ and whose independent sets are exactly the unions $I_1\cup I_2$ with $I_1\in M_1$ and $I_2\in M_2$.

Let $M$ be a matroid on $V$ with rank function $\rho$. We shall repeatedly use the following elementary rank inequality: for any $U\subseteq W\subseteq V$,
\[
\rho(U)\geq\rho(W)-\abs{W\setminus U}.
\]

Tolerance complexes of matroids were studied by Kim and Lew~\cite{kim2023leray, Kim_Lew_2024}.

\begin{definition}\label{def:tolerance-matroid}
Let $M$ be a rank-$r$ matroid on $V$ with rank function $\rho$, and let $0\leq t\leq r$. The $t$-tolerance complex of $M$ is
\[
M^t=\{\sigma\subseteq V:\rho(\sigma)\geq\abs\sigma-t\}.
\]
In particular, $M^0=M$.
\end{definition}

The following proposition records the rank function of a tolerance complex.

\begin{proposition}[Lemmas~3.1 and~3.2 in~\cite{Kim_Lew_2024}]\label{prop:tolerance-rank}
The complex $M^t$ is a matroid. If $\rho^t$ denotes its rank function, then for every $A\subseteq V$,
\[
\rho^t(A)=\min\{\abs A,\rho(A)+t\}.
\]
\end{proposition}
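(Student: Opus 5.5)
The proof is a direct verification from the definitions. I would first check that $M^t$ is a simplicial complex containing $\varnothing$. Then I would determine the size of every maximal face of $M^t$ inside an arbitrary $A\subseteq V$, and deduce both the exchange axiom and the rank formula from that count. The only matroid facts used are the rank inequality $\rho(U)\geq\rho(W)-\abs{W\setminus U}$ for $U\subseteq W$ and the elementary properties of $\Span$.

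\emph{Closure under subsets.} Clearly $\varnothing\in M^t$, since $\rho(\varnothing)=0\geq -t$. Let $\sigma'\subseteq\sigma\in M^t$. The rank inequality gives
\[
\rho(\sigma')\geq\rho(\sigma)-\abs{\sigma\setminus\sigma'}\geq\abs{\sigma}-t-\abs{\sigma\setminus\sigma'}=\abs{\sigma'}-t,
\]
so $\sigma'\in M^t$.

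\emph{Key claim.} For every $A\subseteq V$, every inclusion-maximal face $\sigma$ of $M^t[A]$ has $\abs\sigma=\min\{\abs A,\rho(A)+t\}$. This is the main step.
\begin{itemize}
\item Upper bound: any face $\sigma\subseteq A$ satisfies $\abs\sigma\leq\abs A$ and $\abs\sigma\leq\rho(\sigma)+t\leq\rho(A)+t$.
\item If $\sigma=A$, then $\abs\sigma=\abs A$, and the upper bound shows $\abs A=\min\{\abs A,\rho(A)+t\}$.
\item Otherwise, maximality gives $\sigma\cup\{v\}\notin M^t$ for each $v\in A\setminus\sigma$, that is, $\rho(\sigma\cup\{v\})<\abs\sigma+1-t$.
\item Combined with $\rho(\sigma)\geq\abs\sigma-t$ and $\rho(\sigma\cup\{v\})\geq\rho(\sigma)$, this forces $\rho(\sigma\cup\{v\})=\rho(\sigma)=\abs\sigma-t$.
\item Hence $A\subseteq\Span(\sigma)$, so $\rho(A)=\rho(\sigma)=\abs\sigma-t$. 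Thus $\abs\sigma=\rho(A)+t$, which by the upper bound equals $\min\{\abs A,\rho(A)+t\}$.
\end{itemize}

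\emph{Exchange axiom.} Let $A,B\in M^t$ with $\abs A<\abs B$, and put $W=A\cup B$. If no $x\in B\setminus A$ satisfied $A\cup\{x\}\in M^t$, then $A$ would be a maximal face of $M^t[W]$. On the other hand, $B$ extends to a maximal face of $M^t[W]$ of size at least $\abs B>\abs A$. This contradicts the key claim, which says all maximal faces of $M^t[W]$ have the same size. Hence $M^t$ is a matroid.

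\emph{Rank formula.} $\rho^t(A)$ is the common size of the maximal faces of $M^t[A]$, so the key claim gives $\rho^t(A)=\min\{\abs A,\rho(A)+t\}$ directly. One can also see this size is attained explicitly: take a basis $B_A$ of $M[A]$ and add $\min\{t,\abs{A\setminus B_A}\}$ further elements of $A$. The resulting set $\sigma$ has $\rho(\sigma)=\rho(A)\geq\abs\sigma-t$, so it lies in $M^t$.

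The only step requiring care is the maximal-face computation: recognising that non-extendability forces every $v\in A\setminus\sigma$ into $\Span(\sigma)$ together with the tightness $\rho(\sigma)=\abs\sigma-t$. Everything else is bookkeeping with the rank inequality.
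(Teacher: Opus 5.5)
Your proof is correct and complete. The paper gives no argument for this proposition; it imports it as Lemmas~3.1 and~3.2 of Kim and Lew. So your write-up is a self-contained alternative rather than a reconstruction of something in the text.

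The heart of your argument is the observation that a non-extendable face $\sigma\subsetneq A$ must be tight, that is, $\rho(\sigma)=\abs\sigma-t$ and $A\subseteq\Span(\sigma)$. This shows that every induced subcomplex $M^t[A]$ has all its maximal faces of the same size $\min\{\abs A,\rho(A)+t\}$. That single fact gives both the exchange axiom and the rank formula.

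A standard alternative starts from the observation that $\rho(\sigma)\geq\abs\sigma-t$ holds exactly when $\sigma=I\cup J$ with $I\in M$ and $\abs J\leq t$. Hence $M^t$ is the matroid union $M\vee U_{t,V}$. The matroid union theorem then gives at once that $M^t$ is a matroid and that $\rho^t(A)=\min_{B\subseteq A}\bigl(\rho(B)+\min\{\abs B,t\}+\abs{A\setminus B}\bigr)$. This simplifies to $\min\{\abs A,\rho(A)+t\}$ using $\rho(B)\geq0$ and $\rho(B)+\abs{A\setminus B}\geq\rho(A)$.

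That route is shorter to state but rests on a substantially deeper theorem. Yours uses only monotonicity of $\rho$, the rank inequality, and the fact that $\rho(\Span(\sigma))=\rho(\sigma)$.
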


The next obstruction lemma is implicit in the proof of Theorem~\ref{thm:KM-intro} and is stated explicitly as Lemma~5.2 in~\cite{Kim_Lew_2024}.

\begin{lemma}\label{lem:KM-obstruction}
Let $X$ be a simplicial complex and let $M$ be a matroid on the same vertex set $V$. If $\rho_M(V\setminus\sigma)\geq\ell_X(\sigma)+1$ for every $\sigma\in X$, then $M\nsubseteq X$.
\end{lemma}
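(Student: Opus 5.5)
Since the statement is recorded as Lemma~5.2 in~\cite{Kim_Lew_2024} and is implicit in the proof of Theorem~\ref{thm:KM-intro}, the shortest route is to cite it. For a self-contained argument I would argue by contradiction and induction on $\abs V$, following the deletion--contraction scheme of Kalai and Meshulam. So assume $M\subseteq X$ and that $\rho(V\setminus\sigma)\geq\ell_X(\sigma)+1$ for every $\sigma\in X$; the goal is a contradiction. Applying the hypothesis at $\sigma=\varnothing$ gives $\widetilde H_i(X)=0$ for every $i\geq\rho(V)-1$. The base cases are $V=\varnothing$ and the case where every element of $V$ is a loop. There $\rho(V)=0$, so the hypothesis at $\varnothing$ forces $\ell_X(\varnothing)=-1$. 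This contradicts either $\widetilde H_{-1}(\{\varnothing\})\neq0$, or, when $V\neq\varnothing$ but $M=\{\varnothing\}$, the failure of $M\subseteq X$ with common vertex set $V$. This degenerate step needs care.

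For the inductive step I would pick a non-loop $v\in V$ and compare three complexes: $X$, the deletion $X\setminus v=X[V\setminus\{v\}]$, and the link $\lk(X,v)$. The contraction $M/v$ has rank $\rho(V)-1$ and is contained in $\lk(X,v)$, because $M\subseteq X$ and $\{v\}\in M$. By Lemma~\ref{lem:link-identities}, for $\sigma\in\lk(X,v)$ we have $\ell_{\lk(X,v)}(\sigma)=\ell_X(\sigma\cup\{v\})$. The matroid side satisfies
\[
\rho_{M/v}(V\setminus(\sigma\cup\{v\}))=\rho\bigl(V\setminus\sigma\bigr)-1 .
\]
Hence the hypothesis transfers to the pair $(\lk(X,v),M/v)$, but only in the weakened form where $\ell$ is evaluated at $\sigma\cup\{v\}$ while the rank is measured on $V\setminus\sigma$.

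The plan is therefore to prove by the same induction a slightly stronger homological statement that survives this shift. The statement I have in mind is: under the hypothesis, the fundamental class in $\widetilde H_{r-1}$ is non-trivial. Concretely, when $M$ has no coloops, $M$ is a shellable complex with $\widetilde H_{r-1}(M)\neq0$, and the aim is to show that the image of this class in $\widetilde H_{r-1}(X)$ is nonzero, contradicting $\widetilde H_{r-1}(X)=0$. Coloops are removed first, since coning over a coloop is handled by passing to the link. Nonvanishing of the image would come from the Mayer--Vietoris sequence for $X=(X\setminus v)\cup\operatorname{st}(X,v)$, with intersection $\lk(X,v)$. Its connecting map sends the class of $M$ to the class of $M/v$ in $\widetilde H_{r-2}(\lk(X,v))$, and that class is nonzero by induction.

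The main obstacle is exactly this bookkeeping: making sure the inductive hypothesis really applies to $(\lk(X,v),M/v)$, and controlling the deletion $X\setminus v$, whose links are not links of $X$. If the direct transfer fails, I would fall back on the rank inequality $\rho(U)\geq\rho(W)-\abs{W\setminus U}$, combined with choosing $v$ to be a non-coloop. In that case $\rho(V\setminus\{v\})=\rho(V)$, and the loss of one in rank is absorbed by the drop of one in homological degree.
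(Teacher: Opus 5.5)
Citing Lemma~5.2 of Kim--Lew, as your first sentence proposes, is exactly what the paper does; it gives no argument of its own, so as a citation your proposal matches. The self-contained sketch, however, has a genuine gap at the step you flag. For $\sigma\in\lk(X,v)$, the pair $(\lk(X,v),M/v)$ needs $\ell_X(\sigma\cup\{v\})\leq\rho(V\setminus\sigma)-2$. The hypothesis at $\sigma\cup\{v\}$ only gives $\ell_X(\sigma\cup\{v\})\leq\rho(V\setminus(\sigma\cup\{v\}))-1$. This is one degree too weak whenever $v\in\Span(V\setminus(\sigma\cup\{v\}))$. So the transfer works when $v$ is a coloop (your coloop step is fine). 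Your fallback of choosing $v$ to be a non-coloop is exactly the case where it breaks: contraction lowers the rank by one, but nothing lowers the degree from which the link's homology is known to vanish. The deletion pair $(X\setminus v,M\setminus v)$ does inherit the hypothesis, by Mayer--Vietoris, but only at the complementary faces, those with $v\in\Span(V\setminus(\sigma\cup\{v\}))$. So neither operation alone drives the induction.

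This is not just bookkeeping: the claim that the class of $M/v$ in $\widetilde H_{r-2}(\lk(X,v))$ is nonzero is false for a general non-loop $v$. Take $V=\{a,b,c\}$, $M=U_{1,V}$ (no loops, no coloops), and $X=\{\varnothing,\{a\},\{b\},\{c\},\{a,b\}\}$. Then $M\subseteq X$, and the hypothesis $\rho(V\setminus\sigma)\geq\ell_X(\sigma)+1$ holds at every nonempty face and fails only at $\varnothing$. The image of $\widetilde H_0(M)$ in $\widetilde H_0(X)\cong\mathbb Z$ is nonzero. But for $v=a$ we have $\lk(X,a)=\{\varnothing,\{b\}\}$, so $\widetilde H_{-1}(\lk(X,a))=0$ and the connecting map is zero. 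The surviving class comes from the deletion $X\setminus a$, which your sketch sets aside. Consistently, $(\lk(X,a),M/a)$ violates the hypothesis at $\{b\}$: $c$ is a loop of $M/a$, so $\rho_{M/a}(\{c\})=0<1=\ell_X(\{a,b\})+1$. Choosing $v=c$ would work here, but you give no rule for choosing $v$, and justifying one is the real content of the lemma.

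Two smaller points. First, $M/v$ acquires loops, so the induction must allow them; the all-loops base case is then settled by taking $\sigma$ a facet, where $\ell_X(\sigma)=0>\rho(V\setminus\sigma)-1$, not by $\sigma=\varnothing$. Second, ``the fundamental class'' should read ``the map $\widetilde H_{r-1}(M)\to\widetilde H_{r-1}(X)$ is nonzero,'' since $\widetilde H_{r-1}(M)$ can have rank larger than one.
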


\section{Proof of Theorem~\ref{thm:geom-main}}\label{sec:geometric}
\subsection{Auxiliary lemmas}
We begin with a reduction and several lemmas. Since $\cC$ is finite, a standard argument allows us to assume that all sets are polytopes. The first lemma shows that one constraint can be removed when at most $d+1$ convex sets cut a polytope down to a vertex.

\begin{lemma}\label{lem:remove-constraint}
Let $Q\subseteq\R^d$ be a polytope, let $v$ be a vertex of $Q$, and let $D_1,\dots,D_{d+1}$ be convex sets. If $Q\cap D_1\cap\cdots\cap D_{d+1}=\{v\}$, then some subfamily of at most $d$ of the sets $D_i$ has intersection with $Q$ equal to $\{v\}$.
\end{lemma}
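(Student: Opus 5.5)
The plan is to argue by contradiction, passing to the space of directions at $v$. This turns the statement into the ordinary Helly theorem one dimension lower.

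First I dispose of $d=0$. There $\R^0$ is a point, so $Q=\{v\}$, and the empty subfamily already has intersection with $Q$ equal to $\{v\}$. So let $d\geq1$.

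Put $E_i=Q\cap D_i$. Each $E_i$ is convex and contains $v$. Suppose the claim fails. Then for every $j\in[d+1]$ the set $Q\cap\bigcap_{i\neq j}D_i$ contains $v$ but is not $\{v\}$, so it contains a point $x_j\neq v$. Note that $x_j\in E_i$ for all $i\neq j$.

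Since $v$ is a vertex of $Q$, there is a linear functional $\ell$ with $\ell(v)=0$ and $\ell(x)>0$ for all $x\in Q\setminus\{v\}$. Let $H=\{x:\ell(x)=1\}$, which is an affine $(d-1)$-space. For a convex set $E\ni v$ contained in $Q$, define its cone at $v$ as
\[
\operatorname{cone}_v(E)=\{v+t(x-v): x\in E,\ t\geq0\},
\]
and set $E'=H\cap\operatorname{cone}_v(E)$. The set $E'$ is convex, being a cone over a convex set intersected with an affine space. A point $y\in E'$ corresponds exactly to a direction $y-v$ such that $v+t(y-v)\in E$ for some $t>0$. Because $\ell>0$ on $E\setminus\{v\}$, every point $x\in E\setminus\{v\}$ determines such a point of $E'$, namely the rescaling of $x-v$ for which $\ell=1$.

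Now apply this to the sets $E_1',\dots,E_{d+1}'$ in $H\cong\R^{d-1}$. For each $j$, the direction of $x_j$ gives a common point of the $d$ sets $E_i'$ with $i\neq j$. Thus every $d$ of these $d+1$ convex sets in $\R^{d-1}$ intersect, and the Helly theorem gives a point $y\in\bigcap_i E_i'$. For each $i$ there is then $t_i>0$ with $v+t_i(y-v)\in E_i$. Let $t=\min_i t_i>0$. Since $E_i$ is convex and contains $v$, the whole segment from $v$ to $v+t_i(y-v)$ lies in $E_i$, so $v+t(y-v)\in E_i$ for every $i$. This point lies in $Q\cap D_1\cap\cdots\cap D_{d+1}$ and differs from $v$, because $\ell(y)=1\neq0=\ell(v)$. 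That contradicts the hypothesis.

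The main point needing care is the transfer between $E_i$ and $E_i'$. Two facts must be checked: that $E_i'$ is convex, and that a common direction really yields a common point other than $v$. The second uses the minimum over the $t_i$ together with convexity along rays from $v$. The existence of the strictly supporting functional $\ell$ is exactly where the assumption that $v$ is a vertex of the polytope $Q$ enters.
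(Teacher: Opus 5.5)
Your proof is correct and is essentially the paper's argument: assume each $d$-subfamily leaves a witness $x_j\neq v$, cut near $v$ by a hyperplane, apply Helly's theorem in dimension $d-1$, and obtain a second common point. The only difference is that the paper chooses $H$ to strictly separate $v$ from the finitely many witnesses, so the segments $[v,x_j]$ already cross $H$ and the slices $Q\cap H\cap D_i$ can be used directly; this makes your cone construction and the $\min_i t_i$ step unnecessary. (Also, your $\ell$ should be affine rather than linear unless $v$ is the origin.)
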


\begin{proof}
Suppose, to the contrary, that for every $i\in[d+1]$ there is a point
\[
q_i\in Q\cap\bigcap_{j\neq i}D_j\setminus\{v\}.
\]
Since $v$ is a vertex of $Q$, there is an affine hyperplane $H$ that strictly separates $v$ from the finite set $\{q_1,\dots,q_{d+1}\}$. For each $i$, the segment $[v,q_i]$ is contained in $Q\cap\bigcap_{j\neq i}D_j$ and meets $H$. Hence, every $d$ of the $d+1$ convex sets $Q\cap H\cap D_1,\dots,Q\cap H\cap D_{d+1}$ intersect. These sets lie in an affine space of dimension at most $d-1$. By Helly's theorem, all of them intersect. This contradicts $Q\cap D_1\cap\cdots\cap D_{d+1}=\{v\}$ because $v\notin H$.
\end{proof}

If $\cA\subseteq\cC=\cC_1\sqcup\cdots\sqcup\cC_r$, then its set of colors is $\col(\cA)=\{i\in[r]:\cA\cap\cC_i\neq\varnothing\}$. Thus, $\cA$ uses $\abs{\col(\cA)}$ colors. For convenience, we define a \textit{required pair} $(\cG,\cA)$ as follows:
\[
\begin{cases}
\abs{\cG}=d+m,\quad \abs{\col(\cG)}\geq k,\quad
\cA\subsetneq\cG,\quad \abs{\cA}=d,
&\text{if }\abs{\cC}\geq d+m,\\[2mm]
\cG=\cC,\quad \cA\subsetneq\cC,\quad \abs{\cA}=d,
&\text{if }r\leq\abs{\cC}<d+m.
\end{cases}
\]
Thus, the condition in Theorem~\ref{thm:geom-main} can be rephrased as follows: for every required pair $(\cG,\cA)$, there is a member $C\in\cG\setminus\cA$ such that $\bigcap\bigl(\cA\cup\{C\}\bigr)\neq\varnothing$.

For a nonempty polytope $P$ and $i\in[r]$, write $\cC_i[P]=\{C\cap P:C\in\cC_i\}$, with the original indexing retained. We say that $P$ \emph{retains} the condition in Theorem~\ref{thm:geom-main} if the colored family $\cC[P]=\bigsqcup_{i=1}^r\cC_i[P]$ satisfies that condition. Equivalently, for every required pair $(\cG,\cA)$, there is a member $C\in\cG\setminus\cA$ such that
\[
P\cap\bigcap\bigl(\cA\cup\{C\}\bigr)\neq\varnothing.
\]

The second lemma gives a convenient form of the minimal-polytope argument.

\begin{lemma}\label{lem:minimal-retaining}
There is an inclusion-minimal nonempty polytope $Q$ that retains the condition. Moreover, for every vertex $v$ of $Q$, there is a required pair $(\cG,\cA)$ such that
\begin{equation}\label{eq:vertex-certificate-old}
Q\cap\bigcap\bigl(\cA\cup\{C\}\bigr)\subseteq\{v\}
\qquad\text{for every }C\in\cG\setminus\cA.
\end{equation}
\end{lemma}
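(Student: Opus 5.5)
Existence comes from finiteness and a closure argument; the vertex certificate comes from minimality by shaving off a vertex.

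\emph{Existence.} First we need one nonempty polytope that retains the condition. Every set in $\cC$ is a polytope, and every nonempty $\bigcap(\cA\cup\{C\})$ contains a point. So let $P_0$ be the convex hull of finitely many witness points, one for each nonempty intersection of this form. Then $P_0$ retains the condition, because the hypothesis supplies, for each required pair, a $C$ with a witness point lying in $P_0$.

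Retention is a finite conjunction, over required pairs, of disjunctions of statements ``$P\cap\bigcap(\cA\cup\{C\})\neq\varnothing$''. Hence it is preserved under taking decreasing intersections of compact sets: by pigeonhole, a single $C$ works infinitely often for each pair, and nested nonempty compacta have nonempty intersection. However, a limit of polytopes need not be a polytope. To avoid this, restrict to polytopes whose vertices lie in the finite set of vertices of the arrangement generated by all members of $\cC$ and $P_0$. Alternatively, minimize a discrete quantity: among retaining polytopes of the form $\operatorname{conv}(S)$ with $S$ a subset of a fixed finite point set, choose one that is inclusion-minimal. I would take the cleaner route. Let $S_0$ be a finite set consisting of one witness point from each nonempty set $\bigcap\mathcal B$, $\mathcal B\subseteq\cC$. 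Among subsets $S\subseteq S_0$ with $\operatorname{conv}(S)$ retaining, choose $S$ of minimal size and set $Q=\operatorname{conv}(S)$. Minimality of $|S|$ gives exactly the property used below: deleting any vertex destroys retention. If true inclusion-minimality among all polytopes is needed, I would argue separately, but the ``moreover'' part only uses the vertex-deletion property.

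\emph{Certificate.} Let $v$ be a vertex of $Q$; then $v\in S$. Set $Q'=\operatorname{conv}(S\setminus\{v\})$, which is nonempty when $|S|\geq2$. By minimality, $Q'$ fails retention, so there is a required pair $(\cG,\cA)$ with $Q'\cap\bigcap(\cA\cup\{C\})=\varnothing$ for every $C\in\cG\setminus\cA$. We want to upgrade this to $Q\cap\bigcap(\cA\cup\{C\})\subseteq\{v\}$. This is the main obstacle, because $Q\setminus Q'$ is more than $\{v\}$.

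To handle it, I would choose $S_0$ so that the witness points are \emph{all} vertices of the sets $\bigcap\mathcal B\cap\bigcap\mathcal B'$, where $\mathcal B'$ ranges over halfspace cuts. Better, I would replace the argument by a shaving argument. Let $H$ be a closed halfspace with $v$ in its complement's interior and missing no other vertex, and let $Q_\varepsilon=Q\cap H_\varepsilon$ cut off a small cap at $v$. Since $Q_\varepsilon\subsetneq Q$ is a polytope, minimality gives a required pair, depending on $\varepsilon$, that fails on $Q_\varepsilon$. Finitely many pairs exist, so one pair $(\cG,\cA)$ fails for a sequence $\varepsilon\to0$. Then every nonempty $Q\cap\bigcap(\cA\cup\{C\})$ lies in the caps, whose intersection is $\{v\}$, and this gives \eqref{eq:vertex-certificate-old}. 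This shaving argument requires genuine inclusion-minimality over all polytopes.

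So I would instead secure existence via descending chains. Take $\mathcal P$ to be the retaining polytopes whose facets are drawn from a fixed finite set of hyperplanes, namely the facet hyperplanes of all members of $\cC$ and $P_0$. This family is finite, so an inclusion-minimal member exists. Shaving caps, however, leaves this family, so I must argue differently. Here is the fix. Every nonempty intersection of $Q$ with sets from $\cC$ is a face-lattice polytope of the arrangement, and if it is not $\{v\}$ it contains an arrangement vertex other than $v$. Cut $Q$ by a halfspace whose hyperplane separates $v$ from all the other arrangement vertices in $Q$. A cleaner version is to add that one hyperplane to the pool; then finiteness is kept and minimality still applies. I expect the delicate point to be this bookkeeping making ``inclusion-minimal'' compatible with the vertex cut.
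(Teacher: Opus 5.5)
\textbf{There is a genuine gap: the existence of a truly inclusion-minimal retaining polytope is never established.} Your certificate step is fine once you have a $Q$ that is inclusion-minimal among \emph{all} nonempty retaining polytopes; the shaving argument, with caps $Q\setminus Q_\varepsilon$ shrinking to $v$, is correct. But the proposal never produces such a $Q$, and this existence is exactly the first assertion of the lemma.

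The finite surrogates you try do not substitute for it:
\begin{itemize}
\item Minimizing $|S|$ over $S\subseteq S_0$ only tells you that $\operatorname{conv}(S\setminus\{v\})$ fails to retain. As you observe yourself, this does not confine $Q\cap\bigcap(\cA\cup\{C\})$ to $\{v\}$.
\item Minimality within the family of polytopes cut out by a fixed finite pool $\mathcal H$ of hyperplanes says nothing about $Q\cap H^+$ when the separating hyperplane $H$ is not in $\mathcal H$. Here $H$ depends on $Q$ and $v$, which are chosen after the pool is fixed.
\item If you add $H$ to the pool, $Q$ need no longer be minimal in the enlarged family. Re-choosing a minimal member creates new vertices and new arrangement vertices, which require further cuts.
\end{itemize}
So ``finiteness is kept and minimality still applies'' is circular, and neither part of the lemma is actually proved.

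The missing idea is a short repair of the chain argument you had already started. You correctly noted that for a chain $\mathscr P'$ of retaining polytopes, each required pair $(\cG,\cA)$ has a single $C_{\cG,\cA}\in\cG\setminus\cA$ that meets every member of the chain. For an arbitrary chain, rather than a sequence, argue via the chain property: if every $C$ were missed by some member $P_C$, the smallest of the finitely many $P_C$ would miss all of them. By compactness, $L=\bigcap\mathscr P'$ then meets $\bigcap(\cA\cup\{C_{\cG,\cA}\})$. You stopped because $L$ need not be a polytope, but you do not need $L$ itself. Pick one point of $L\cap\bigcap(\cA\cup\{C_{\cG,\cA}\})$ for each of the finitely many required pairs. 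Their convex hull is a nonempty retaining polytope contained in $L$, hence a lower bound for the chain in the poset of nonempty retaining polytopes. The dual form of Zorn's lemma then yields a genuinely inclusion-minimal $Q$; this is exactly what the paper does.

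The same convex-hull trick also gives the certificate more directly than shaving. Suppose the certificate failed for every required pair. Then for each pair choose some $C$ and a point $q_{\cG,\cA}\in Q\cap\bigcap(\cA\cup\{C\})\setminus\{v\}$. The polytope $\operatorname{conv}\{q_{\cG,\cA}\}$ is a retaining subpolytope of $Q$ that misses the extreme point $v$, contradicting minimality.
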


\begin{proof}
Let $\mathscr P$ be the set of all nonempty retaining polytopes, ordered by inclusion. Let $\mathscr P'\subseteq\mathscr P$ be a chain, and put $L=\bigcap_{P\in\mathscr P'}P$. The members of the chain are nonempty compact sets, and every finite subcollection has nonempty intersection. Hence, $L$ is nonempty, compact, and convex.

Fix a required pair $(\cG,\cA)$. We claim that there is a $C_{\cG,\cA}\in\cG\setminus\cA$ such that, for every $P\in\mathscr P'$,
\[
P\cap\bigcap\bigl(\cA\cup\{C_{\cG,\cA}\}\bigr)\neq\varnothing.
\]
Otherwise, for every $C\in\cG\setminus\cA$, choose a member $P_C\in\mathscr P'$ disjoint from $\bigcap(\cA\cup\{C\})$. Since only finitely many sets $P_C$ occur and they belong to a chain, one of them is contained in all the others. It would be disjoint from $\bigcap(\cA\cup\{C\})$ for every $C\in\cG\setminus\cA$, contradicting its retaining property. The compact sets
\[
\biggl\{P\cap\bigcap\bigl(\cA\cup\{C_{\cG,\cA}\}\bigr)\biggr\}_{P\in\mathscr P'}
\]
are therefore nonempty and nested. Moreover, since every finite subcollection has nonempty intersection, we have
\[
\bigcap_{P\in\mathscr P'}\biggl(P\cap\bigcap\bigl(\cA\cup\{C_{\cG,\cA}\}\bigr)\biggr)
=L\cap\bigcap\bigl(\cA\cup\{C_{\cG,\cA}\}\bigr)
\neq\varnothing.
\]

Choose one such intersection point for every required pair, and let $P_0$ be the convex hull of the finitely many chosen points. Then $P_0\subseteq L$ is a nonempty retaining polytope. Thus, every chain has a lower bound in $\mathscr P$, and the dual form of Zorn's lemma gives an inclusion-minimal retaining polytope $Q$.

Let $v$ be a vertex of $Q$. Suppose that \eqref{eq:vertex-certificate-old} fails for every required pair. For each required pair $(\cG,\cA)$, choose $C_{\cG,\cA}\in\cG\setminus\cA$ and
\[
q_{\cG,\cA}\in
Q\cap\bigcap\bigl(\cA\cup\{C_{\cG,\cA}\}\bigr)
\setminus\{v\}.
\]
The convex hull of all these points is retaining and is contained in $Q$. It does not contain the extreme point $v$, and hence it is a proper retaining subpolytope of $Q$, contradicting minimality.
\end{proof}

\begin{lemma}\label{lem:good-colors}
Let $Q$ be an inclusion-minimal retaining polytope. Then every vertex $v$ of $Q$ belongs to every member of at least $r-k+1$ color classes.
\end{lemma}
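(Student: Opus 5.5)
The plan is to argue by contradiction and build a required pair that $Q$ fails to retain. Suppose a vertex $v$ of $Q$ lies in every member of at most $r-k$ color classes. Then at least $k$ color classes each contain a member missing $v$. Choose $D_1,\dots,D_k$ with pairwise distinct colors and $v\notin D_j$ for every $j$.

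\emph{Step 1: a certificate at $v$.} By Lemma~\ref{lem:minimal-retaining} there is a required pair $(\cG,\cA)$ satisfying \eqref{eq:vertex-certificate-old}. Since $Q$ retains the condition, some $C\in\cG\setminus\cA$ has $Q\cap\bigcap(\cA\cup\{C\})\neq\varnothing$. Together with \eqref{eq:vertex-certificate-old} this forces
\[
Q\cap\bigcap\bigl(\cA\cup\{C\}\bigr)=\{v\}.
\]
This is an intersection of $Q$ with $d+1$ convex sets equal to the vertex $\{v\}$. Lemma~\ref{lem:remove-constraint} then gives a subfamily $\cA'\subseteq\cA\cup\{C\}$ with $\abs{\cA'}\le d$ and $Q\cap\bigcap\cA'=\{v\}$. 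Consequently $Q\cap\bigcap(\cA''\cup\{D\})=\varnothing$ for every $\cA''\supseteq\cA'$ and every convex set $D$ with $v\notin D$.

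\emph{Step 2: a violating required pair.} Let $\cA''$ be $\cA'$ padded to exactly $d$ members. For the padding I would prefer the $D_j$'s, so as to add colors, and otherwise use arbitrary further members of $\cC$. The new family is $\cG'=\cA''\cup\{D_{j_1},\dots,D_{j_m}\}$, where the $m$ sets outside $\cA''$ are distinct $D_j$'s; this is possible because $k\ge m$. All $m$ outside sets miss $v$, so by Step~1 $Q\cap\bigcap(\cA''\cup\{D\})=\varnothing$ for every $D\in\cG'\setminus\cA''$. Hence $Q$ does not retain the condition for $(\cG',\cA'')$, which contradicts the retaining property once we know $(\cG',\cA'')$ is a required pair.

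\emph{The two cases of a required pair.} If $\abs{\cC}\ge d+m$, I need $\abs{\cG'}=d+m$ and $\abs{\col(\cG')}\ge k$. This is where I must distribute the $k$ sets $D_j$ between $\cA''$ and the outside part so that all $k$ colors, or enough colors together with $\col(\cA')$, appear. If $r\le\abs{\cC}<d+m$, then $\cG'=\cC$ and the same construction works directly, since every $D_j\in\cC$.

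\emph{Main obstacle.} I expect the counting in the case $\abs{\cC}\ge d+m$ to be the hard step. If $\abs{\cA'}+k>m+d$, not all $D_j$ fit into $\cG'$, and I must show that the colors already present in $\cA'$ make up the deficit. First I would try to choose $\cA'$ inclusion-minimal and to use only those $D_j$ whose colors are absent from $\col(\cA')$. The slots available for them number $m+d-\abs{\cA'}$, which is at least $m$. The number of missing colors is at most $k-\abs{\col(\cA')}$. If that count alone does not close the gap, I would return to Lemma~\ref{lem:remove-constraint} and iterate it to shrink $\cA'$, trading members of $\cA'$ for $D_j$'s. This is legitimate because, by Step~1, once a $D_j$ lies in $\cA''$ the intersection with $Q$ is already empty, so the argument does not need $\cA'$ to be large.
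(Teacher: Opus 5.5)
Your Step~1 is correct and matches the paper's use of Lemmas~\ref{lem:minimal-retaining} and~\ref{lem:remove-constraint}. The gap is in the case $\abs{\cC}\geq d+m$: you leave it open, and the remedy you sketch does not close it. Your construction never uses that the certificate family satisfies $\abs{\col(\cG)}\geq k$, and without this the color count can genuinely fail. Suppose $\abs{\cA'}=d$. Then $\cA''=\cA'$, and the $m$ outside sets must all miss $v$. If the members of $\cA'$ shared a single color, $\cG'$ would use at most $m+1<k$ colors whenever $k\geq m+2$, and nothing in your argument rules this out. Iterating Lemma~\ref{lem:remove-constraint} is not available. That lemma needs $d+1$ sets cutting $Q$ down to $\{v\}$ and returns at most $d$ of them; it cannot shrink a family that already has at most $d$ members, and such a family can be irreducible (two lines through a vertex $v$ of a triangle $Q$, each containing one edge at $v$). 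Trading a member $A\in\cA'$ for some $D_j$ is not legitimate either. Your Step~1 emptiness claim needs $\cA''\supseteq\cA'$, whereas $Q\cap\bigcap(\cA'\setminus\{A\})$ may be strictly larger than $\{v\}$ and meet $D_j$.

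The missing idea is short. Since $\cA'\subseteq\cA\cup\{C\}\subseteq\cG$ and every member of $\cA\cup\{C\}$ contains $v$, pad $\cA'$ to a $d$-member family $\cA_v$ \emph{inside} $\cA\cup\{C\}$. Then still $Q\cap\bigcap\cA_v=\{v\}$, and because $\abs{\cG\setminus\cA_v}=m$, you get $\abs{\col(\cA_v)}\geq k-m$. If at least $k$ colors are bad, then at least $k-\abs{\col(\cA_v)}$ bad colors lie outside $\col(\cA_v)$, and this number is at most $m$. Take one bad member from each of that many such colors, and complete with further bad members to $m$ sets; this is possible because there are at least $k\geq m$ bad members, all outside $\cA_v$. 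The result is a required pair that $Q$ does not retain. In your own terms: $\abs{\cF}-\abs{\col(\cF)}$ is nondecreasing under inclusion, so $\abs{\cA'}-\abs{\col(\cA')}\leq\abs{\cG}-\abs{\col(\cG)}\leq d+m-k$, which is exactly the slot inequality you were missing.

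Your case $r\leq\abs{\cC}<d+m$ also needs repair. There $\cG'=\cC$, so the outside part is all of $\cC\setminus\cA''$, not $m$ chosen sets, and it may contain members through $v$. With $\cA_v$ as above, this case reduces to a count: the at least $k\geq m$ bad members all lie outside $\cA_v$, forcing $\abs{\cC}\geq d+m$, a contradiction.
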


\begin{proof}
Call a color \emph{bad at $v$} if it has a member that does not contain $v$. We show that fewer than $k$ colors are bad.

By Lemma~\ref{lem:minimal-retaining}, there is a required pair $(\cG,\cA)$ satisfying \eqref{eq:vertex-certificate-old}. Since $Q$ retains this requirement, there is a member $C_0\in\cG\setminus\cA$ such that
\[
Q\cap\bigcap\bigl(\cA\cup\{C_0\}\bigr)=\{v\}.
\]
The family $\cA\cup\{C_0\}$ has $d+1$ members. Applying Lemma~\ref{lem:remove-constraint}, we obtain a subfamily $\cA_v\subseteq\cA\cup\{C_0\}$ with $|\cA_v|=d$ such that $Q\cap\bigcap\cA_v=\{v\}$. In particular, every member of $\cA_v$ contains $v$.

Suppose first that $\abs{\cC}\geq d+m$. Since $\cG\setminus\cA_v$ has $m$ members, we have $\abs{\col(\cA_v)}\geq k-m$. Assume, for a contradiction, that at least $k$ colors are bad at $v$. Since $\abs{\col(\cA_v)}\geq k-m$, we can choose $m$ distinct bad members forming a family $\cB$, disjoint from $\cA_v$, such that $\abs{\col(\cA_v\cup\cB)}\geq k$.

Indeed, first choose bad members from enough colors outside $\col(\cA_v)$ to reach $k$ colors, and then complete the family to $m$ bad members; this is possible because $k\geq m$. The family $\cA_v\cup\cB$ has exactly $d+m$ members, so the condition gives a $C\in\cB$ such that
\[
Q\cap\bigcap\bigl(\cA_v\cup\{C\}\bigr)\neq\varnothing.
\]
However, we have $Q\cap\bigcap\cA_v=\{v\}$, while $v \notin C$ by the choice of $\mathcal B$, a contradiction.

Now suppose that $r\leq\abs{\cC}<d+m$. In this case, the whole family $\cC[Q]$ is $d$-semi-intersecting. Applying the $d$-semi-intersection property once more gives a member of $\cC\setminus\cA_v$ that contains $v$. All bad members lie outside $\cA_v$, and at least one further member outside $\cA_v$ is good. Hence, the number of bad members is at most
\[
\abs{\cC}-d-1\leq m-2<k.
\]
Thus, fewer than $k$ colors are bad in both cases. Therefore, at least $r-k+1$ colors are good at $v$.
\end{proof}

\subsection{Proof of Theorem~\ref{thm:geom-main}}
We first show the existence of one $I\in\binom{[r]}{r-k+1}$ such that $K_I(\cC)\neq\varnothing$.

\begin{theorem}\label{thm:point-KL}
Under the condition of Theorem~\ref{thm:geom-main}, there is an $I\in\binomset{[r]}{r-k+1}$ such that $K_I(\cC)\neq\varnothing$.
\end{theorem}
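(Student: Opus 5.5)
Throughout, write $s=r-k+1$. The approach is to apply the minimal-polytope machinery just developed and read off a vertex. By the standard reduction, all members of $\cC$ may be assumed to be polytopes. Lemma~\ref{lem:minimal-retaining} then supplies an inclusion-minimal nonempty polytope $Q$ that retains the condition of Theorem~\ref{thm:geom-main}. Since $Q$ is a nonempty polytope, it has at least one vertex $v$. By Lemma~\ref{lem:good-colors}, $v$ belongs to every member of at least $s$ color classes. Let $I$ consist of any $s$ of these color indices. Then $v\in\bigcap\cC_i$ for every $i\in I$, so $v\in K_I(\cC)$ and $K_I(\cC)\neq\varnothing$.

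Most of the work is already contained in Lemmas~\ref{lem:remove-constraint}, \ref{lem:minimal-retaining} and~\ref{lem:good-colors}. What remains is to check that the hypotheses of those lemmas are actually met.

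First, the whole space (or a large polytope containing all members of $\cC$) must itself retain the condition. This holds because the hypothesis of Theorem~\ref{thm:geom-main} is exactly the retaining condition for a polytope $P$ containing every set in $\cC$. Hence the family $\mathscr P$ used in Lemma~\ref{lem:minimal-retaining} is nonempty, and Zorn's lemma can be applied.

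Second, when $|\cC|\ge d+m$, required pairs must exist, so that Lemma~\ref{lem:minimal-retaining} produces a certificate at each vertex. This needs a subfamily of $d+m$ sets using at least $k$ colors. Such a subfamily exists because $k\le r$, $k\le m+d$, and $|\cC|\ge\max\{m+d,r\}$: take one member of each of $k$ colors and complete it with arbitrary further members.

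In the degenerate range $r\le|\cC|<d+m$, required pairs $(\cC,\cA)$ with $|\cA|=d$ exist because $|\cC|\ge r\ge d+1$. In the edge case where no required pair exists at all, minimality instead forces $Q$ to be a single point. This case should be handled separately: the argument of Lemma~\ref{lem:good-colors} shows that the number of bad colors is at most $|\cC|-d-1<k$, or the vacuous case is trivial.

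The main obstacle I expect is the edge case $d=0$. There Lemma~\ref{lem:remove-constraint} reads "$Q\cap D_1=\{v\}$ gives the empty subfamily with $Q=\{v\}$", which is consistent because $Q\subseteq\R^0$ is a point. Everything then goes through. So the proof is essentially a two-line invocation of the earlier lemmas, and the only care needed is in verifying nonemptiness and the existence of required pairs.
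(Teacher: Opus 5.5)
Your proof is correct and is exactly the paper's argument: take an inclusion-minimal retaining polytope $Q$ from Lemma~\ref{lem:minimal-retaining}, pick a vertex $v$, and use Lemma~\ref{lem:good-colors} to obtain $r-k+1$ colors all of whose members contain $v$. Your extra checks (that some polytope retains the condition, and that a required pair exists in both ranges, so that the vertex certificate of Lemma~\ref{lem:minimal-retaining} is non-vacuous) fill in details the paper leaves implicit; your separate ``edge case'' paragraph is therefore unnecessary, since you have already shown that a required pair always exists.
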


\begin{proof}
Choose an inclusion-minimal retaining polytope $Q$ by Lemma~\ref{lem:minimal-retaining}, and let $v$ be a vertex of $Q$. By Lemma~\ref{lem:good-colors}, at least $r-k+1$ color classes have the property that every one of their members contains $v$. Choosing any $r-k+1$ of these colors gives the desired set $I$.
\end{proof}

The next lemma is the dimension-reduction step used in the induction. We say that a parameter tuple $(d,r,m,k)$ is \textit{admissible} if it satisfies the conditions in Theorem~\ref{thm:geom-main}.

\begin{lemma}\label{lem:dimension-reduction}
Let $Q$ be a polytope that retains the condition for the parameter tuple $(d,r,m,k)$, and put $d'=\dim Q<d$. Let $R\subseteq[r]$, put $j=r-\abs R$, and suppose that $0\leq j\leq d-d'$ and $j\leq k-1$. Set $r'=r-j$, $k'=k-j$, and $m'=\min\{m,k'\}$, and assume that $k'\leq m'+d'$. Then $(d',r',m',k')$ is admissible, and the colored family $\cD_R=\bigsqcup_{i\in R}\cC_i[Q]$ in $\operatorname{aff}Q\cong\R^{d'}$ satisfies the condition for these reduced parameters.
\end{lemma}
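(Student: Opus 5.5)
The plan is to check admissibility directly from the inequalities, and then to verify the reduced condition by lifting a reduced required pair $(\cG',\cA')$ in $\cD_R$ to a required pair $(\cG,\cA)$ in $\cC$. The retaining property of $Q$ applied to $(\cG,\cA)$ then has to return a set $C$ that already lies in $\cG'\setminus\cA'$.

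\emph{Admissibility.} Since $Q\neq\varnothing$, we have $d'\ge0$. From $r\ge d+1$ and $j\le d-d'$ we get $r'=r-j\ge d'+1$. From $j\le k-1$ we get $k'\ge1$, and together with $m\ge1$ this gives $m'=\min\{m,k'\}\ge1$. Next, $k'=k-j\le r-j=r'$, so $m'\le k'\le r'$. Finally $k'\le m'+d'$ is assumed, hence $m'\le k'\le\min\{m'+d',r'\}$. The two cases in the definition of a required pair, depending on whether $\abs{\cD_R}\ge d'+m'$, must be kept separate. Note that $\abs{\cD_R}\ge r'$ holds because every color class is nonempty.

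\emph{Lifting.} Fix a reduced required pair $(\cG',\cA')$, with $\abs{\cG'}=d'+m'$, at least $k'$ colors from $R$, and $\abs{\cA'}=d'$. Identify its members with the original sets $C\in\cC_i$, $i\in R$. I would build $\cG=\cG'\cup\mathcal S\cup\mathcal T$ and $\cA=\cA'\cup\mathcal S$, where:
\begin{itemize}[label=$\circ$]
\item $\abs{\mathcal S}=d-d'$ and $\abs{\mathcal T}=m-m'$;
\item $\mathcal S\cup\mathcal T$ contains one member from each of the $j$ colors outside $R$. This uses $j\le d-d'$, and it makes $\abs{\col(\cG)}\ge k'+j=k$.
\end{itemize}
Then $\abs{\cG}=d+m$ and $\abs{\cA}=d$, so $(\cG,\cA)$ is a required pair. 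In the small case, where $\cG=\cC$, the reduced small case is handled analogously, with $\cG'=\cD_R$.

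Applying the retaining property of $Q$ to $(\cG,\cA)$ gives a set $C\in\cG\setminus\cA$ with $Q\cap\bigcap(\cA'\cup\mathcal S\cup\{C\})\ne\varnothing$. This point lies in $\operatorname{aff}Q$ and in $\bigcap\cA'\cap C\cap Q$. If $C\in\cG'\setminus\cA'$, the reduced condition holds for $(\cG',\cA')$.

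\emph{Main obstacle.} The hard step is to rule out that the witness $C$ lies in $\mathcal T$. I would argue by contradiction. Assume that every $C\in\cG'\setminus\cA'$ satisfies $Q\cap\bigcap(\cA'\cup\{C\})=\varnothing$. Then choose the fillers adversarially: $\mathcal T$ should consist of members $C$ with $Q\cap\bigcap(\cA'\cup\mathcal S\cup\{C\})=\varnothing$, so that no witness can come from $\mathcal T$ and the retaining property of $Q$ is contradicted.

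To make such choices possible, I would exploit that $\dim Q=d'<d$. Inside $\operatorname{aff}Q\cong\R^{d'}$, Helly's theorem (in the form of Lemma~\ref{lem:remove-constraint} and the $d'$-Leray property of the nerve) shows that at most $d'$ constraints matter. This leaves the $d-d'$ slots of $\mathcal S$ free to absorb the outside colors, and it gives the freedom to take $\mathcal S$ to be sets already meeting $Q\cap\bigcap\cA'$ in a controlled way. When $m'=m$, we have $\mathcal T=\varnothing$ and this issue disappears. When $m'=k'<m$, the inequality $k'\le m'+d'$ and the count of available colors, namely $m-m'=m-k+j$, should be what allows the members of $\mathcal T$ to be taken from colors already present in $\cG$. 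This is the step I expect to require the most care.
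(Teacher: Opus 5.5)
There is a genuine gap, at exactly the step you flagged. Your admissibility check is correct. Your lift also works in the case $\mathcal T=\varnothing$, namely when $m'=m$, $\abs{\cG'}=d'+m'$ and $\abs{\cC}\geq d+m$.

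With $\cA=\cA'\cup\mathcal S$, however, the retaining property of $Q$ may return its witness in $\mathcal T$, and choosing $\mathcal T$ ``adversarially'' cannot work in general. You have no control over which members of $\cC$ remain outside $\cG'\cup\mathcal S$. If they all contain $Q$ and $Q\cap\bigcap(\cA'\cup\mathcal S)\neq\varnothing$, then every choice of $\mathcal T$ supplies a witness.

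The small case makes this worse. When $r\leq\abs{\cC}<d+m$, the original required pair is forced to be $(\cC,\cA)$, so you have no freedom over the other members at all. This case can occur even when $\abs{\cD_R}\geq d'+m'$, since $\abs{\cC}\geq\abs{\cD_R}+j$ and $d'+m'+j$ may be less than $d+m$. Your plan does not treat this mixed case. Helly's theorem in $\operatorname{aff}Q$ does not help either: the problem is not how many constraints matter, but where the extra sets lie.

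The missing idea is to put a member of $\cG'\setminus\cA'$ itself into $\cA$. Under your contradiction hypothesis, pick any $B_0\in\cG'\setminus\cA'$ and set $\mathcal E=\cA'\cup\{B_0\}$. Then $Q\cap\bigcap\mathcal E=\varnothing$ and $\abs{\mathcal E}=d'+1\leq d$. This is where $d'<d$ is really used, not in absorbing the outside colors.

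Now add to $\cG'$ one member of each of the $j$ omitted colors. The result has at least $k$ colors and at most $d'+m'+j\leq d+m$ members. Complete it to $d+m$ members, or take all of $\cC$ in the small case. Choose any $d$-subfamily $\cA\supseteq\mathcal E$. For every remaining member $C$,
$Q\cap\bigcap(\cA\cup\{C\})\subseteq Q\cap\bigcap\mathcal E=\varnothing$,
so no witness exists anywhere. This contradicts the assumption that $Q$ retains the condition.

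With this choice of $\cA$, where the fillers lie no longer matters. The same argument covers all four combinations of large and small cases for $\cD_R$ and $\cC$.
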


\begin{proof}
The inequalities $j\leq d-d'$ and $j\leq k-1$ give $r'=r-j\geq d'+1$ and $1\leq k'\leq r'$. We also have $1\leq m'\leq k'$. Hence, the remaining admissibility inequality is the assumed bound $k'\leq m'+d'$.

For the second part of the lemma, we first consider the case $\abs{\cD_R}\geq d'+m'$. Let $\cG\subseteq\cD_R$ with $\abs{\cG}=d'+m'$ and $\abs{\col(\cG)}\geq k'$. Suppose that $\cG$ is not $d'$-semi-intersecting. Then there is a proper subfamily $\cA\subsetneq\cG$ with $\abs{\cA}=d'$ such that, for every $B\in\cG\setminus\cA$, we have $\bigcap\bigl(\cA\cup\{B\}\bigr)=\varnothing$. Choose an arbitrary $B_0\in\cG\setminus\cA$, and put $\mathcal E=\cA\cup\{B_0\}$. Then
\begin{equation}\label{eq:empty-small-witness}
\bigcap\mathcal E=\varnothing,
\qquad
\abs{\mathcal E}=d'+1\leq d.
\end{equation}
If $r\leq\abs{\cC}<d+m$, then the whole family $\cC[Q]$ contains $\mathcal E$. By \eqref{eq:empty-small-witness}, it is not $d$-semi-intersecting, contradicting the condition.

Assume, therefore, that $\abs{\cC}\geq d+m$. Add to $\cG$ one indexed member from each of the $j$ omitted color classes. The enlarged family uses at least $k'+j=k$ colors and has $d'+m'+j\leq d+m$ members. Complete it arbitrarily to a $(d+m)$-member subfamily $\cF\subseteq\cC[Q]$. The family $\cF$ contains the subfamily $\mathcal E$, whose intersection is empty and which has at most $d$ members, so $\cF$ is not $d$-semi-intersecting. This contradicts the condition.

It remains to consider the case $r'\leq\abs{\cD_R}<d'+m'$. Suppose that the whole family $\cD_R$ is not $d'$-semi-intersecting. As above, it contains a subfamily $\mathcal E$ satisfying \eqref{eq:empty-small-witness}. If $\abs{\cC}<d+m$, this again contradicts the condition.

If $\abs{\cC}\geq d+m$, add one member from every omitted color class to the whole family $\cD_R$. The resulting family uses all $r$ colors and has $\abs{\cD_R}+j<d'+m'+j\leq d+m$ members. Complete it to a $(d+m)$-member subfamily of $\cC[Q]$. The completed family contains $\mathcal E$ and hence is not $d$-semi-intersecting, again a contradiction. This completes the proof.
\end{proof}

\begin{proof}[Proof of Theorem~\ref{thm:geom-main}]
If $k=1$, then $m=1$, $s=r$, and $\binomset{[r]}{s}= \{[r] \}$. Theorem~\ref{thm:point-KL} gives $K_{[r]}(\cC)\neq\varnothing$, which proves the assertion. We may therefore assume that $k\geq2$. We argue by induction on $d$. When $d=0$, the assertion is trivial. Assume that $d\geq1$ and that the result holds in all smaller dimensions.

Choose an inclusion-minimal retaining polytope $Q$, and put $d'=\dim Q$, $h=d-d'$, and $s=r-k+1$. Suppose first that $d'=d$. Choose $d+1$ affinely independent vertices of $Q$. By Lemma~\ref{lem:good-colors}, assign to each of these vertices an $s$-subset of colors all of whose members contain that vertex.

Let the distinct assigned subsets be $I_1,\dots,I_c$, and let $t_a$ be the number of selected vertices assigned to $I_a$. The corresponding $t_a$ vertices are affinely independent and belong to $K_{I_a}(\cC)$, so $\dim K_{I_a}(\cC)\geq t_a-1$. Since $k\geq2$, we have $1\leq s\leq r-1$, and therefore $\binom{r}{s}\geq r\geq d+1$. Choose a further $d+1-c$ distinct $s$-subsets, different from $I_1,\dots,I_c$. Their intersections have dimension at least $-1$. Hence, the sum of the dimensions of all $d+1$ selected intersections is at least
\[
\sum_{a=1}^c(t_a-1)-(d+1-c)
=(d+1-c)-(d+1-c)=0.
\]

Assume now that $d'<d$, so $h\geq1$. Define $r'=\max\{r-h,s\}$, $k'=\max\{1,k-h\}$, and $m'=\min\{m,k'\}$. Let $j=r-r'$. Then we have $j=\min\{h,k-1\}$, $k'=k-j$, and $r'-k'+1=r-k+1=s$. We distinguish two cases.

\smallskip
\noindent\emph{Case 1: $h\geq k-1$.}
Then $j=k-1$, $r'=s$, and $k'=m'=1$. Moreover, we have
\[
s=r-k+1\geq d-k+2\geq d'+1.
\]
Hence, the reduced parameter tuple $(d',r',m',k')$ is admissible. For every $R\in\binomset{[r]}{s}$, Lemma~\ref{lem:dimension-reduction} and Theorem~\ref{thm:point-KL}, applied in $\operatorname{aff}Q$, give $Q\cap K_R(\cC)\neq\varnothing$. Thus, every $s$-subset of $[r]$ gives a nonempty intersection. Since $\binom{r}{s}\geq d+1$, any $d+1$ distinct choices prove the theorem.

\smallskip
\noindent\emph{Case 2: $h\leq k-2$.}
Now $j=h$, $r'=r-h$, $k'=k-h\geq2$, and $m'=\min\{m,k'\}$. The reduced parameters are admissible. Indeed, $r'=r-h=r-(d-d')\geq d'+1$. If $m'=k'$, then $k'\leq m'+d'$ is immediate; otherwise, $m'=m$, and
\[
k'=k-h\leq m+d-h=m'+d'.
\]

For every $R\in\binomset{[r]}{r'}$, Lemma~\ref{lem:dimension-reduction} and Theorem~\ref{thm:point-KL} give an $s$-subset $I\subseteq R$ such that $Q\cap K_I(\cC)\neq\varnothing$. Call such an $s$-subset \emph{good}, and let $\mathscr I$ be the set of all good $s$-subsets. We claim that $\abs{\mathscr I}\geq h+1$.

Indeed, otherwise choose one element from every member of $\mathscr I$, and enlarge the resulting set, if necessary, to an $h$-subset $H\subseteq[r]$. Every good set meets $H$, whereas $[r]\setminus H$ has size $r'$ and must contain a good $s$-subset, a contradiction.

Choose distinct good sets $I_1,\dots,I_h$, and choose an $h$-subset $H\subseteq[r]$ meeting each of them. Put $R_0=[r]\setminus H$. By Lemma~\ref{lem:dimension-reduction}, the family $\bigsqcup_{i\in R_0}\cC_i[Q]$ satisfies the condition for $(d',r',m',k')$. Apply the induction hypothesis in $\operatorname{aff}Q$ to obtain pairwise distinct $s$-subsets $J_1,\dots,J_{d'+1}\subseteq R_0$ such that $\sum_{a=1}^{d'+1}\dim\bigl(Q\cap K_{J_a}(\cC)\bigr)\geq0$.

Every $I_i$ meets $H$, while every $J_a$ is contained in $R_0$, so all the sets $I_1,\dots,I_h,J_1,\dots,J_{d'+1}$ are pairwise distinct. Since each $I_i$ is good, $\dim K_{I_i}(\cC)\geq0$, and $\dim K_{J_a}(\cC)\geq\dim\bigl(Q\cap K_{J_a}(\cC)\bigr)$. The total number of chosen sets is $h+d'+1=d+1$, and their dimension sum is nonnegative. This completes the induction.
\end{proof}

\section{Proof of Theorem~\ref{thm:top-dimensional}}\label{sec:top-dimensional}

\begin{proof}[Proof of Theorem~\ref{thm:top-dimensional}]
Since $M\subseteq X$, the contrapositive of Lemma~\ref{lem:KM-obstruction} gives a face $\tau\in X$ such that
\begin{equation}\label{eq:tau-KM}
\rho_0\coloneqq\rho_M(V\setminus\tau)
\leq\ell_X(\tau)\leq d.
\end{equation}
Choose an independent set $J\subseteq V\setminus\tau$ of size $\rho_0$ that spans $V\setminus\tau$, and extend $J$ to a basis $B$ of $M$. Put $I=B\setminus J$. For every $v\in I$, the set $B\setminus\{v\}$ contains $J$. Therefore,
\[
V\setminus\Span_M(B\setminus\{v\})
\subseteq V\setminus\Span_M(J)
\subseteq\tau.
\]
By Lemma~\ref{lem:delta-monotone} and Equation~\eqref{eq:tau-KM},
\[
\delta_X\paren{V\setminus\Span_M(B\setminus\{v\})}
\geq\delta_X(\tau)
\geq\ell_X(\tau)
\geq\rho_0.
\]
For every $v\in J$, the corresponding term is at least $-1$ by Definition~\ref{def:link-Leray-dimension}. Hence,
\begin{align*}
\sum_{v\in B}
\delta_X\paren{V\setminus\Span_M(B\setminus\{v\})}
&\geq\abs I\rho_0-\abs J\\
&=(r-\rho_0)\rho_0-\rho_0\\
&=(r-\rho_0-1)\rho_0\\
&\geq0,
\end{align*}
because $r\geq d+1\geq\rho_0+1$.
\end{proof}

\section{Proof of Theorem~\ref{thm:m=1ork>=m+d-1}}\label{sec:top-KL}

We first prove an auxiliary lemma. We then consider the three cases in Theorem~\ref{thm:m=1ork>=m+d-1}: $m=1$, $k=m+d-1$, and $k=m+d$.

\subsection{Auxiliary lemma}

\begin{lemma}\label{lem:certificate-selection}
Let $d\geq1$, $r\geq d+1$, and $k\geq2$ be integers, and let $1\leq q\leq r-k+1$. Let $X$ be a simplicial complex on $V$, and let $M$ be a rank-$r$ matroid on $V$. Suppose that there is a face $\tau\in X$ such that, with $\rho_0=\rho_M(V\setminus\tau)\leq k-1$, one has
\begin{equation}\label{eq:certificate}
(\delta_X(\tau)+1)
\binom{r-\rho_0}{q}
\geq d+1.
\end{equation}
Then there exist a basis $B$ of $M$ and $d+1$ pairwise distinct sets $I_1,\dots,I_{d+1}\in\binomset{B}{q}$
such that
\[
\sum_{i=1}^{d+1}
\delta_X(V\setminus\Span_M(B\setminus I_i))
\geq0.
\]
\end{lemma}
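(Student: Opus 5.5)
We mimic the proof of Theorem~\ref{thm:top-dimensional}. Choose an independent set $J\subseteq V\setminus\tau$ of size $\rho_0$ spanning $V\setminus\tau$, extend it to a basis $B$ of $M$, and put $I=B\setminus J$. Then $|I|=r-\rho_0\geq r-k+1\geq q$. For every $q$-subset $I'\subseteq I$, the set $B\setminus I'$ contains $J$. Hence
\[
V\setminus\Span_M(B\setminus I')\subseteq V\setminus\Span_M(J)\subseteq\tau .
\]
Lemma~\ref{lem:delta-monotone} then gives $\delta_X(V\setminus\Span_M(B\setminus I'))\geq\delta_X(\tau)$. So we have $\binom{r-\rho_0}{q}$ distinct $q$-subsets of $B$, each contributing at least $\delta_X(\tau)\geq0$. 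Every other $q$-subset of $B$ contributes at least $-1$.

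Set $N=\binom{r-\rho_0}{q}$ and $\delta=\delta_X(\tau)$. If $N\geq d+1$, take any $d+1$ of the subsets of $I$; the sum is at least $0$, since $\delta\geq0$ because $\tau\in X$. Otherwise take all $N$ subsets of $I$, together with $d+1-N$ further distinct $q$-subsets of $B$ not contained in $I$. The sum is then at least
\[
N\delta-(d+1-N)=N(\delta+1)-(d+1)\geq0
\]
by \eqref{eq:certificate}.

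The remaining point, which I expect to be the main check, is availability: we need $\binom{r}{q}\geq d+1$ so that enough distinct $q$-subsets of $B$ exist. Since $k\geq2$ we have $q\leq r-1$, and $q\geq1$, so $\binom{r}{q}\geq r\geq d+1$. Hence the $d+1-N$ extra subsets can always be found among the $q$-subsets of $B$ that meet $J$. These are necessarily distinct from the subsets of $I$, because any subset of $B$ other than a subset of $I$ meets $J$. This completes the plan.
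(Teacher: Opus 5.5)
Your proposal is correct and follows the paper's proof essentially step for step. You build the same basis $B$ by extending a spanning independent set $J$ of $V\setminus\tau$, use the same monotonicity bound from Lemma~\ref{lem:delta-monotone}, split into the same cases on $N=\binom{r-\rho_0}{q}$, and check availability with the same count $\binom{r}{q}\geq r\geq d+1$ (which uses $1\leq q\leq r-1$).
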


\begin{proof}
Choose an independent set $J\subseteq V\setminus\tau$ of size $\rho_0$ that spans $V\setminus\tau$, and extend $J$ to a basis $B$ of $M$. Put $I=B\setminus J$. Since $\rho_0\leq k-1$, we have $|I|=r-\rho_0\geq r-k+1\geq q$.
For every $I'\in\binomset{I}{q}$,
\[
V\setminus\Span_M(B\setminus I')
\subseteq V\setminus\Span_M(J)
\subseteq\tau.
\]
Therefore, Lemma~\ref{lem:delta-monotone} gives
\begin{equation}\label{eq:good-subset-bound}
\delta_X(V\setminus\Span_M(B\setminus I'))
\geq\delta_X(\tau).
\end{equation}

Set $N=\binom{r-\rho_0}{q}$. If $N\geq d+1$, choose any $d+1$ members of $\binomset{I}{q}$. Every term is nonnegative by Equation~\eqref{eq:good-subset-bound}, because $\tau$ is a face.

Assume that $N<d+1$. Choose all $N$ members of $\binomset{I}{q}$, and then choose $d+1-N$ pairwise distinct members of $\binomset{B}{q}\setminus\binomset{I}{q}$. Since $q\leq r-k+1\leq r-1$, we have
\[
\binom{r}{q}-N\geq r-N\geq d+1-N,
\]
which implies that there are enough such sets. Every additional term is at least $-1$. Thus, the total sum is at least
\[
N\delta_X(\tau)-(d+1-N)
=(\delta_X(\tau)+1)N-(d+1)
\geq0
\]
by Equation~\eqref{eq:certificate}.
\end{proof}

\subsection{The case \texorpdfstring{$m=1$}{m=1}}

We first prove a slightly stronger lemma, which will also be used later.

\begin{lemma}\label{lem:certificate-m1}
Let $d\geq1$, $r\geq d+1$, and $2\leq k\leq d+1$. Let $X$ be a $d$-Leray complex on $V$, and let $M$ be a rank-$r$ matroid on $V$. Assume that every $(d+1)$-set $U\subseteq V$ with $\rho_M(U)\geq k$ is a face of $X$.

Then, for every $q\in[r-k+1]$, there is a face $\tau\in X$ such that, with $\rho_0=\rho_M(V\setminus\tau)\leq k-1$,
\[
(\delta_X(\tau)+1)
\binom{r-\rho_0}{q}
\geq d+1.
\]
\end{lemma}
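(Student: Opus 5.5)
The plan is to reduce the statement to the Kalai--Meshulam obstruction, Lemma~\ref{lem:KM-obstruction}, applied to a tolerance matroid instead of $M$ itself. Set $t=d+1-k\geq 0$ (this uses $k\leq d+1$), and consider $M^t$. By Proposition~\ref{prop:tolerance-rank}, $M^t$ is a matroid with rank function $\rho^t(A)=\min\{|A|,\rho_M(A)+t\}$.

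\emph{Step 1: $M^t\subseteq X$.} A $(d+1)$-set $U\in M^t$ satisfies $\rho_M(U)\geq d+1-t=k$, so it is a face of $X$ by hypothesis. Now let $\sigma\in M^t$ have size at most $d+1$. The rank of $M^t$ is $\min\{|V|,r+t\}\geq d+1$, assuming $|V|\geq d+1$ (otherwise the situation is degenerate and can be handled directly). So the exchange axiom extends $\sigma$ to a $(d+1)$-set of $M^t$, which lies in $X$, and hence $\sigma\in X$. Thus every subset of size at most $d+1$ of a face of $M^t$ is in $X$. Lemma~\ref{lem:Leray-Helly} then gives $M^t\subseteq X$.

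\emph{Step 2: obtain $\tau$.} By the contrapositive of Lemma~\ref{lem:KM-obstruction} applied to $X$ and $M^t$, there is $\tau\in X$ with
\[
\rho^t(V\setminus\tau)\leq\ell_X(\tau)\leq\delta_X(\tau)\leq d,
\]
where the middle inequality is Lemma~\ref{lem:delta-monotone}. Write $\rho_0=\rho_M(V\setminus\tau)$.

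In the main case the minimum defining $\rho^t$ is attained by $\rho_0+t$. Then $\rho_0\leq d-t=k-1$ and $\delta_X(\tau)+1\geq\rho_0+d+2-k$. Put $a=r-\rho_0\geq r-k+1\geq q$.
\begin{itemize}
\item If $q=a$, then $\rho_0=k-1$, and the left side of the target inequality is at least $(d+1)\cdot1$.
\item If $q<a$, then $\binom{a}{q}\geq a$. With $x=\rho_0+d+2-k\geq1$ and $y=r-\rho_0\geq1$, we get $xy\geq x+y-1=r+d+1-k\geq d+1$, using $r\geq k$.
\end{itemize}
This gives the required inequality $(\delta_X(\tau)+1)\binom{r-\rho_0}{q}\geq d+1$.

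\emph{Step 3, the expected obstacle.} The remaining case is $\rho^t(V\setminus\tau)=|V\setminus\tau|<\rho_0+t$. Then all we know is $w:=|V\setminus\tau|\leq\ell_X(\tau)$, and $\rho_0\leq w$ could exceed $k-1$. My first attempt will be to modify $\tau$ using monotonicity (Lemma~\ref{lem:delta-monotone}). Passing to a subface $\tau'\subseteq\tau$ does not decrease $\delta_X$, but it enlarges $V\setminus\tau'$ and may increase its rank, which is the wrong direction. So instead I would choose $\tau$ among all faces given by the obstruction to minimize $w$, or equivalently maximize $|\tau|$.

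Here the point to exploit is that $V\setminus\tau$ is then a small set whose elements are spanned ``cheaply''. If $\rho_0\geq k$, then $V\setminus\tau$ has at most $d$ elements and rank at least $k$. I would try to extend $V\setminus\tau$ by vertices of $\tau$ to a $(d+1)$-set of rank at least $k$, which must be a face of $X$ by hypothesis. I would then combine this with Lemma~\ref{lem:KL-boundary} or Lemma~\ref{lem:Leray-Helly} to derive a contradiction, for instance by showing that $V$ itself is a face. If $V\in X$, the choice $\tau=V$ gives $\rho_0=0$ and $\delta_X(V)=L(\{\varnothing\})=0$, so the needed bound reduces to $\binom{r}{q}\geq d+1$. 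This holds because $1\leq q\leq r-1$ and $r\geq d+1$.

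If that contradiction is unavailable, I would instead use $w\leq\ell_X(\tau)$ directly with $\rho_0\leq\min\{w,k-1\}$, and redo the product estimate with $x=w+1$ in place of $\rho_0+d+2-k$. I expect this case analysis to be the delicate point of the proof.
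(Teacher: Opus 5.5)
\textbf{Verdict: there is a gap in Step 3, fixable in one line.} Your Steps 1 and 2 are correct. They are essentially the paper's argument run forward rather than by contradiction. The paper assumes the conclusion fails, deduces $\rho^t(V\setminus\sigma)\geq\ell_X(\sigma)+1$ for every $\sigma\in X$, and then contradicts $M^t\subseteq X$ exactly as in your Step 1. Your estimate $xy\geq x+y-1$ is a tidy substitute for the paper's $(d+1-a)(a+1)\geq d+1$. One small point: $|V|\geq r\geq d+1$ holds automatically, so Step 1 has no degenerate case.

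\textbf{The gap.} Step 3 is left as an open ``obstacle'' with only tentative strategies. Your fallback plan also uses $\rho_0\leq\min\{w,k-1\}$, even though you yourself note that $\rho_0\leq k-1$ is not known in that case.

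\textbf{The missing idea.} The vertices of $\lk(X,\tau)$ lie in $V\setminus\tau$, so if $w=|V\setminus\tau|\geq1$ then $\ell_X(\tau)\leq w-1$. To see this, consider the three possibilities for the link:
\begin{enumerate}
\item it is $\{\varnothing\}$, so $\ell_X(\tau)=0$;
\item it is the full simplex on its $w'\leq w$ vertices, hence acyclic, so $\ell_X(\tau)=-1$;
\item otherwise it has dimension at most $w'-2$, so its reduced homology vanishes in all degrees $\geq w'-1$, giving $\ell_X(\tau)\leq w'-1\leq w-1$.
\end{enumerate}

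\textbf{How this closes the proof.} In Step 3, the inequality $w\leq\ell_X(\tau)$ therefore forces $w=0$, i.e.\ $\tau=V$. That is precisely the case you already verified via $\binom{r}{q}\geq d+1$, using $1\leq q\leq r-1$ and $r\geq d+1$. This is how the paper handles it: it shows $|V\setminus\sigma|\geq\ell_X(\sigma)+1$ for every face $\sigma\neq V$ and treats $\sigma=V$ separately. With this observation inserted, your proof is complete. You need neither the minimization of $w$ nor any appeal to Lemma~\ref{lem:KL-boundary}.
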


\begin{proof}
Suppose, to the contrary, that every face $\sigma\in X$ with $\rho_\sigma\coloneqq\rho_M(V\setminus\sigma)\leq k-1$ satisfies
\begin{equation}\label{eq:no-m1-certificate}
(\delta_X(\sigma)+1)
\binom{r-\rho_\sigma}{q}
\leq d.
\end{equation}
Since $\rho$. Set $t=d+1-k$, and consider the tolerance matroid $M^t$. We shall prove that for every $\sigma\in X$, we have $\rho^t(V\setminus\sigma) \geq\ell_X(\sigma)+1$.

Fix a face $\sigma\in X$. Assume first that $\rho_\sigma\geq k$. By Proposition~\ref{prop:tolerance-rank},
\[
\rho^t(V\setminus\sigma)
=\min\{|V\setminus\sigma|,\rho_\sigma+t\}
\geq\min\{|V\setminus\sigma|,d+1\}.
\]
If $|V\setminus\sigma|\geq d+1$, this is at least $\ell_X(\sigma)+1$, because $X$ is $d$-Leray. If $1\leq|V\setminus\sigma|\leq d$, then $\lk(X,\sigma)$ has at most $|V\setminus\sigma|$ vertices, and therefore
\[
\ell_X(\sigma)+1\leq|V\setminus\sigma|
=\rho^t(V\setminus\sigma).
\]

Assume now that $\rho_\sigma\leq k-1$. The set $V\setminus\sigma$ is nonempty. Indeed, if $\sigma=V$, then $\delta_X(\sigma)=0$, and the left-hand side of Equation~\eqref{eq:no-m1-certificate} is at least $\binom rq\geq r\geq d+1$, since $1\leq q\leq r-k+1\leq r-1$.

As above, $|V\setminus\sigma|\geq\ell_X(\sigma)+1$. Since
\[
\rho^t(V\setminus\sigma)
=\min\{|V\setminus\sigma|,\rho_\sigma+t\},
\]
it remains to prove that $\rho_\sigma+t\geq\ell_X(\sigma)+1$. Suppose, to the contrary, that
\[
\ell_X(\sigma)+1\geq\rho_\sigma+t+1.
\]
Set $a=k-1-\rho_\sigma$. Then $0\leq a\leq d$, and
\[
\rho_\sigma+t+1=d+1-a,
\qquad
q\leq r-k+1=r-\rho_\sigma-a.
\]

If $a=0$, then Lemma~\ref{lem:delta-monotone} gives
\[
(\delta_X(\sigma)+1)
\binom{r-\rho_\sigma}{q}
\geq\ell_X(\sigma)+1
\geq d+1,
\]
contradicting Equation~\eqref{eq:no-m1-certificate}.

Assume that $a\geq1$. Then $1\leq q\leq r-\rho_\sigma-1$. Moreover, $q\leq r-\rho_\sigma-a$ implies $r-\rho_\sigma\geq a+1$. Therefore,
\[
\binom{r-\rho_\sigma}{q}
\geq r-\rho_\sigma
\geq a+1.
\]
Together with $\delta_X(\sigma)\geq\ell_X(\sigma)$, this gives
\[
(\delta_X(\sigma)+1)
\binom{r-\rho_\sigma}{q}
\geq(d+1-a)(a+1)
=d+1+a(d-a)
\geq d+1,
\]
again contradicting Equation~\eqref{eq:no-m1-certificate}. This proves the claim.

By Lemma~\ref{lem:KM-obstruction}, $M^t\nsubseteq X$. Choose a set $\eta\in M^t\setminus X$. If $|\eta|\geq d+1$, the contrapositive of Lemma~\ref{lem:Leray-Helly}, together with the hereditary property of $X$, gives a $(d+1)$-subset $\eta'\subseteq\eta$ with $\eta'\notin X$. If $|\eta|<d+1$, extend $\eta$ in the matroid $M^t$ to an independent set $\eta'$ of size $d+1$. This is possible because Proposition~\ref{prop:tolerance-rank} gives $\rank(M^t)\geq\rank(M)=r\geq d+1$. Since $X$ is a simplicial complex and $\eta\notin X$, we also have $\eta'\notin X$.

In both cases, $\eta'\in M^t$, and hence $\rho_M(\eta')\geq|\eta'|-t=d+1-t=k$. The hypothesis of the lemma therefore gives $\eta'\in X$, a contradiction.
\end{proof}

\begin{proposition}[The case $m=1$]\label{prop:positive-m1}
Assume the hypothesis of Conjecture~\ref{conjecture: generalization of Kim-Lew's result} with $m=1$, and set $s=r-k+1$. Let $q\in[s]$, and define
\[
p=
\begin{cases}
1,&k=1\text{ and }q=r,\\
d+1,&\text{otherwise}.
\end{cases}
\]
Then there exist a basis $B$ of $M$ and pairwise distinct sets $I_1,\dots,I_p\in\binomset{B}{q}$
such that
\[
\sum_{i=1}^{p}
\delta_X(V\setminus\Span_M(B\setminus I_i))
\geq0.
\]
\end{proposition}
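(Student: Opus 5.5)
Since $m=1$, the hypothesis of Conjecture~\ref{conjecture: generalization of Kim-Lew's result} reads: every $(d+1)$-set $U\subseteq V$ with $\rho_M(U)\geq k$ is a face of $X$. Indeed, there are no vertices $v_j$, so the alternative clause is vacuous, and $\rho(U)\geq k$ forces $|U|\geq k$. Also $m\leq k\leq\min\{m+d,r\}$ gives $1\leq k\leq d+1$. The plan is to split into the cases $k=1$ and $k\geq2$.

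\emph{Case $k\geq2$.} Here $p=d+1$ for every $q\in[s]$. The hypothesis of Lemma~\ref{lem:certificate-m1} holds verbatim, with $d\geq1$, $r\geq d+1$ and $2\leq k\leq d+1$. Thus for the given $q\in[r-k+1]$ we obtain a face $\tau\in X$ with $\rho_0=\rho_M(V\setminus\tau)\leq k-1$ satisfying~\eqref{eq:certificate}. Lemma~\ref{lem:certificate-selection} then applies directly, since $1\leq q\leq r-k+1$, and yields the basis $B$ and the $d+1$ distinct $q$-subsets with nonnegative sum. Nothing further is needed in this case.

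\emph{Case $k=1$.} Now $s=r$ and $q\in[r]$. Every $(d+1)$-set with rank at least $1$ is a face. Together with the Helly-type Lemma~\ref{lem:Leray-Helly}, this should give that the set $W$ of non-loops of $M$ is a face of $X$. Indeed, any subset of $W$ of size at most $d+1$ lies in a $(d+1)$-subset of $V$ containing a non-loop, which is a face (using $|V|\geq d+1$), and faces are closed under subsets. Hence $\tau:=W\in X$, and $\rho_M(V\setminus\tau)=0$ because $V\setminus W$ consists of loops. Take any basis $B\subseteq W$. For any $I\subseteq B$ we have $\Span(B\setminus I)\supseteq V\setminus W$, so $V\setminus\Span(B\setminus I)\subseteq\tau$. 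Lemma~\ref{lem:delta-monotone} then gives $\delta_X(V\setminus\Span(B\setminus I))\geq\delta_X(\tau)\geq0$.

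It remains to count. If $q=r$, then $p=1$ and $I_1=B$ works. If $1\leq q\leq r-1$, then $\binom rq\geq r\geq d+1$, so we can pick $d+1$ distinct $q$-subsets of $B$, each contributing a nonnegative term.

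\emph{Expected obstacle.} The substantive work sits in Lemma~\ref{lem:certificate-m1}, which is already established. The remaining care lies in the degenerate edges: checking that $k\geq2$ really satisfies $k\leq d+1$, and in the $k=1$ case ensuring the Helly step is valid. For the latter, a small subset of $W$ may need padding to size $d+1$, which requires $|V|\geq d+1$; if $|V|<d+1$, the rank-$r$ condition $r\geq d+1$ is impossible anyway. Alternatively, the $k=1$ case could be handled by Lemma~\ref{lem:KM-obstruction} applied to $M^{d}$, mirroring the proof of Lemma~\ref{lem:certificate-m1} with $t=d$. However, the direct Helly argument above seems simpler, and I would try it first.
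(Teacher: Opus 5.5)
Your proposal is correct and follows the paper's proof essentially verbatim. For $k\geq 2$ you chain Lemma~\ref{lem:certificate-m1} into Lemma~\ref{lem:certificate-selection}. For $k=1$ you use Lemma~\ref{lem:Leray-Helly} to show that the set of non-loops is a face containing every $V\setminus\Span_M(B\setminus I)$, and you count $q$-subsets via $\binom{r}{q}\geq r\geq d+1$ when $q<r$. The only cosmetic difference is that you pad small subsets to $(d+1)$-sets inside $V$ rather than inside the set of non-loops, which works equally well.
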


\begin{proof}
Suppose first that $k=1$. Let $S=V\setminus\Span_M(\varnothing)$ be the set of nonloops of $M$. Since $M$ has rank $r\geq d+1$, the set $S$ has at least $d+1$ vertices. Every $(d+1)$-subset of $S$ has positive rank and is therefore a face of $X$. Every subset of $S$ of size at most $d+1$ is consequently a face, so Lemma~\ref{lem:Leray-Helly} gives $S\in X$.

Let $B$ be any basis of $M$. For every nonempty set $I\subseteq B$, the monotonicity of the span operator gives
\[
V\setminus\Span_M(B\setminus I)
\subseteq V\setminus\Span_M(\varnothing)
=S.
\]
Hence, every corresponding link-Leray dimension is nonnegative. If $q=r$, take $I_1=B$. If $q<r$, then $\binom rq\geq r\geq d+1$, so we may choose any $d+1$ pairwise distinct $q$-subsets of $B$.

Assume now that $k\geq2$. Lemma~\ref{lem:certificate-m1}, followed by Lemma~\ref{lem:certificate-selection}, gives the result.
\end{proof}

\subsection{The case \texorpdfstring{$k=m+d-1$}{k=m+d-1}}

\begin{proposition}[The case $k=m+d-1$]\label{prop:positive-adjacent}
Assume the hypothesis of Conjecture~\ref{conjecture: generalization of Kim-Lew's result} and suppose that $k=m+d-1$. Set $s=r-k+1$. Let $q\in[s]$, and define
\[
p=
\begin{cases}
1,&k=1\text{ and }q=r,\\
d+1,&\text{otherwise}.
\end{cases}
\]
Then there exist a basis $B$ of $M$ and pairwise distinct sets $I_1,\dots,I_p\in\binomset{B}{q}$
such that
\[
\sum_{i=1}^{p}
\delta_X(V\setminus\Span_M(B\setminus I_i))
\geq0.
\]
\end{proposition}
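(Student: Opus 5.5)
If $m=1$, the statement is exactly Proposition~\ref{prop:positive-m1}. Otherwise $m\geq2$, and then $k=m+d-1\geq d+1\geq2$. In particular the exceptional value $p=1$ never occurs, so $p=d+1$. The plan is to produce a face $\tau\in X$ satisfying the certificate inequality \eqref{eq:certificate} with $\rho_M(V\setminus\tau)\leq k-1$, and then to finish with Lemma~\ref{lem:certificate-selection}.

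\emph{Step 1: a bad $(d+1)$-set of rank at least $d$.} First suppose that every $(d+1)$-set $A\subseteq V$ with $\rho(A)\geq d$ is a face of $X$.
\begin{itemize}
\item If $d\geq2$, Lemma~\ref{lem:certificate-m1} applies with $k$ replaced by $d$ and with any $q\leq r-k+1\leq r-d+1$. It yields a face $\tau$ with $\rho_M(V\setminus\tau)\leq d-1\leq k-1$ and the required certificate, and we are done.
\item If $d=1$, the $m=1$, $k=1$ argument of Proposition~\ref{prop:positive-m1} shows that the set $S$ of nonloops is a face. Then $\tau=S$ works, since $\rho_M(V\setminus S)=0$ and $\delta_X(S)\geq0$, while $\binom{r}{q}\geq r\geq2=d+1$.
\end{itemize}
So we may assume there is a $(d+1)$-set $A\notin X$ with $\rho(A)\geq d=k-m+1$.

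\emph{Step 2: the boundary set $U$.} Let $U$ be the set from Lemma~\ref{lem:KL-boundary}. Since $\rho(A)\geq k-(m-1)$ and $\rho(V)=r\geq k$, we can greedily choose at most $m-1$ vertices outside $A$ that raise the rank to at least $k$. If needed we pad this choice to exactly $m-1$ vertices, which is possible because $|V|\geq m+d$. Applying the hypothesis to $A$ together with these vertices, and using $A\notin X$, some $v_j$ lies in $U$, so $U\neq\varnothing$. Lemma~\ref{lem:KL-boundary} then gives $U\cup(A\setminus\{a\})\in X$ for every $a\in A$.

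Next we show $\rho(V\setminus U)\leq k-1$. Suppose instead $\rho(V\setminus U)\geq k$. Then the same greedy choice can be made with all extra vertices taken from $V\setminus(U\cup A)$, again padded to $m-1$ vertices. The hypothesis would then put some $v_j$ into $U$, which contradicts $v_j\notin U$. The main technical care is here: the padding must stay outside $U$, or else be shown unnecessary. If $V\setminus(U\cup A)$ has fewer than $m-1$ vertices, its rank together with $A$ is at most $d+(m-2)=k-1$ anyway. So in all cases $\rho_0:=\rho(V\setminus U)\leq k-1$.

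\emph{Step 3: the certificate.} Note that $U\in X$, as a subset of the face $U\cup(A\setminus\{a\})$. The link $\lk(X,U)$ contains every proper subset of $A$: each such subset lies in some $A\setminus\{a\}$, and $U\cup(A\setminus\{a\})\in X$. On the other hand, $A$ itself is not in $\lk(X,U)$, since $A\notin X$. Hence the induced subcomplex of $\lk(X,U)$ on $A$ is the boundary of a $d$-simplex, which has nonzero $\widetilde H_{d-1}$. This gives $\delta_X(U)\geq d$.

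Taking $\tau=U$, we get
\[
(\delta_X(\tau)+1)\binom{r-\rho_0}{q}\geq d+1,
\]
because $r-\rho_0\geq r-k+1\geq q$ makes the binomial coefficient at least $1$. Lemma~\ref{lem:certificate-selection} then yields the basis $B$ and the sets $I_1,\dots,I_{d+1}$. The step I expect to need the most checking is the rank bound $\rho(V\setminus U)\leq k-1$ in Step~2, specifically that the padding vertices can always be taken outside $U$.
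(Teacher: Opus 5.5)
There is a genuine gap in Step~2, exactly where you suspected. Your fallback, ``if $V\setminus(U\cup A)$ has fewer than $m-1$ vertices, its rank together with $A$ is at most $d+(m-2)=k-1$'', silently uses $\rho(A)\le d$. You only know $\rho(A)\ge d$, and $A$ may be independent.

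Put $Y=V\setminus(U\cup A)$. If $|Y|=m-2$ and $A\cup Y$ is independent, then $\rho(V\setminus U)=k$. No contradiction is available, because every choice of $m-1$ vertices outside $A$ meets $U$.

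This case really occurs. Take $d=1$, $m=k=2$, $V=\{a,b,c\}$, $X$ the path with edges $ac,cb$, and $M=U_{2,V}$. All hypotheses hold: $X$ is $1$-Leray, $r=2$, $|V|=3$, and for the only nonedge $\{a,b\}$ the third vertex $c$ is adjacent to both. Step~1 does not apply, since $\{a,b\}\notin X$ has rank $2$. Then $U=\{c\}$ and $\rho(V\setminus U)=2=k$.

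Worse, no face $\tau$ with $\rho_M(V\setminus\tau)\le 1$ satisfies \eqref{eq:certificate}. Such a $\tau$ must be $\{a,c\}$ or $\{b,c\}$, with $\delta_X(\tau)=0$ and $\binom{1}{1}=1<2$. So this case cannot be closed by Lemma~\ref{lem:certificate-selection} at all. For the basis $\{a,b\}$, the two good $1$-subsets give the different faces $\{a,c\}$ and $\{b,c\}$, whose union is not a face.

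The missing idea is a direct construction not routed through a single face. This is what the paper does in its Case~2, where $A\sqcup\{v_1,\dots,v_{m-2}\}$ plays the role of your $A\cup Y$.

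\begin{itemize}
\item Since $d\ge 1$, for each $a\in A$ pick $b\in A\setminus\{a\}$. Then $U\cup\{a\}\subseteq U\cup(A\setminus\{b\})\in X$.
\item Extend the independent set $A\cup Y$ to a basis $B$.
\item Choose $R\subseteq B\setminus(A\cup Y)$ with $|R|=q-1$. This is possible because $|B\setminus(A\cup Y)|=r-k=s-1$.
\item Set $I_a=R\cup\{a\}$ for $a\in A$. These are $d+1$ distinct $q$-subsets of $B$.
\item Then $V\setminus\Span_M(B\setminus I_a)\subseteq V\setminus\bigl((A\cup Y)\setminus\{a\}\bigr)=U\cup\{a\}\in X$, so all $d+1$ terms are nonnegative.
\end{itemize}

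With this subcase added, the rest of your argument is sound and is a genuine streamlining of the paper's proof. You replace the induction on $m$ by directly choosing a nonface $A$ with $\rho(A)\ge d$, and you use Lemma~\ref{lem:certificate-m1} at threshold $d$ (or the nonloop argument when $d=1$). Your Step~3 uses the same mechanism as the paper's Case~1.
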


\begin{proof}
We use induction on $m$. If $m=1$, then $k=d$, and the conclusion follows from Proposition~\ref{prop:positive-m1}.

Assume that $m\geq2$. Suppose first that the Kim--Lew condition in Conjecture~\ref{conjecture: generalization of Kim-Lew's result} also holds for the parameters $m-1$ and $k-1$. Since $k-1=(m-1)+d-1$, the induction hypothesis applies. Moreover,
\[
q\leq r-k+1<r-(k-1)+1.
\]
If $k-1=1$, then $q\leq r-1$, so the exceptional one-subset case in the induction hypothesis does not occur. Thus, the induction hypothesis gives the required $d+1$ subsets.

We may therefore assume that this condition fails for the parameters $m-1$ and $k-1$. Hence, there is a set
\[
U=A\sqcup\{v_1,\dots,v_{m-2}\},
\qquad |A|=d+1,
\]
such that $\rho_M(U)\geq k-1$, $A\notin X$, and, for every $j\in[m-2]$, there is a proper subset $\sigma_j\subsetneq A$ satisfying $\sigma_j\cup\{v_j\}\notin X$. Since $|U|=d+m-1=k$, either $\rho_M(U)=k-1$ or $\rho_M(U)=k$.

\smallskip
\noindent\emph{Case 1: $\rho_M(U)=k-1$.}
Put $W=V\setminus\Span_M(U)$. Since $\rho_M(U)=k-1<r$, the set $U$ does not span $V$, and hence $W\neq\varnothing$. For every $w\in W$, the set $U\cup\{w\}$ has rank $k$. Apply the original Kim--Lew condition to
\[
A\sqcup\{v_1,\dots,v_{m-2},w\}.
\]
The set $A$ is not a face, and each $v_j$ has a proper subset $\sigma_j\subsetneq A$ for which $\sigma_j\cup\{v_j\}\notin X$. Therefore, the condition forces
\[
\sigma\cup\{w\}\in X
\qquad\text{for every proper subset }\sigma\subsetneq A.
\]

Since $W\neq\varnothing$, by Lemma~\ref{lem:KL-boundary}, we have
\[
W\cup(A\setminus\{a\})\in X
\qquad\text{for every }a\in A.
\]
In particular, $W\in X$. The induced subcomplex $\lk(X,W)[A]$ is the boundary of the $d$-simplex on $A$: every proper subset of $A$ belongs to the link, whereas $A$ does not because $A\notin X$. Hence, $\delta_X(W)\geq d$. Moreover,
\[
\rho_M(V\setminus W)
=\rho_M(\Span_M(U))
=k-1.
\]
Therefore,
\[
(\delta_X(W)+1)
\binom{r-(k-1)}{q}
\geq(d+1)\binom{s}{q}
\geq d+1.
\]
By Lemma~\ref{lem:certificate-selection}, we are done.

\smallskip
\noindent\emph{Case 2: $\rho_M(U)=k$.}
Since $|U|=k$, the set $U$ is independent. Put $T=V\setminus U$. The assumption $|V|\geq m+d=k+1$ gives $T\neq\varnothing$. For every $w\in T$, the set $U\cup\{w\}$ has rank at least $k$. As in Case~1, the original Kim--Lew condition gives
\[
\sigma\cup\{w\}\in X
\qquad\text{for every proper subset }\sigma\subsetneq A.
\]

Since $T\neq\varnothing$, by Lemma~\ref{lem:KL-boundary}, we have
\[
T\cup(A\setminus\{a\})\in X
\qquad\text{for every }a\in A.
\]
Since $d\geq1$, for each $a\in A$ we may choose $b\in A\setminus\{a\}$. Then
\[
T\cup\{a\}\subseteq T\cup(A\setminus\{b\})\in X,
\]
so $T\cup\{a\}$ is a face for every $a\in A$.

Extend the independent set $U$ to a basis $B$ of $M$. Since $|B\setminus U|=r-k=s-1$ and $q\leq s$, choose a set $R\subseteq B\setminus U$ of size $q-1$. For every $a\in A$, put $I_a=R\cup\{a\}$. Since $R\cap A=\varnothing$, these are $d+1$ pairwise distinct $q$-subsets of $B$. Moreover, $B\setminus I_a$ contains $U\setminus\{a\}$, so
\[
V\setminus\Span_M(B\setminus I_a)
\subseteq V\setminus(U\setminus\{a\})
=T\cup\{a\}.
\]
Thus, every set on the left is a face of $X$, and all $d+1$ terms are nonnegative. This completes the proof.
\end{proof}

\subsection{The case \texorpdfstring{$k=m+d$}{k=m+d}}

\begin{lemma}\label{lem:certificate-extreme}
Assume the hypothesis of Conjecture~\ref{conjecture: generalization of Kim-Lew's result} and suppose that $k=m+d$. Set $s=r-k+1$. Then, for every $q\in[s]$, there is a face $\tau\in X$ such that, with $\rho_0=\rho_M(V\setminus\tau)\leq k-1$, one has
\[
(\delta_X(\tau)+1)
\binom{r-\rho_0}{q}
\geq d+1.
\]
\end{lemma}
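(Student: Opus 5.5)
We argue by induction on $m$, in the same way as Proposition~\ref{prop:positive-adjacent}, but here we produce the certificate $\tau$ directly rather than the final family of subsets. Since $d\geq1$ and $m\geq1$, we have $k=m+d\geq2$. This is what later lets Lemma~\ref{lem:certificate-selection} finish the argument.

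\emph{Base case $m=1$.} Here $k=d+1$. For $m=1$ the hypothesis of Conjecture~\ref{conjecture: generalization of Kim-Lew's result} reads: every $(d+1)$-set $U$ with $\rho_M(U)\geq d+1$ is a face of $X$. This is exactly the assumption of Lemma~\ref{lem:certificate-m1} with $k=d+1$, which then yields the required face $\tau$ for every $q\in[r-k+1]$.

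\emph{Induction step, $m\geq2$, first case.} Suppose the Kim--Lew condition also holds for the parameters $(m-1,k-1)$. These are admissible: $k-1=(m-1)+d$, $m-1\leq k-1\leq r$, and $|V|\geq\max\{m+d,r\}$ still exceeds the size requirement. Put $s'=r-(k-1)+1=s+1$. Since $q\leq s<s'$, the induction hypothesis gives a face $\tau$ with $\rho_0\leq k-2\leq k-1$ satisfying the inequality, and we are done.

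\emph{Induction step, second case.} Otherwise the condition fails for $(m-1,k-1)$. Then there is a set
\[
U=A\sqcup\{v_1,\dots,v_{m-2}\},\qquad |A|=d+1,
\]
with the following properties: $\rho_M(U)\geq k-1=|U|$, so $U$ is independent; $A\notin X$; and for each $j$ there is a proper subset $\sigma_j\subsetneq A$ with $\sigma_j\cup\{v_j\}\notin X$.

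Put $W=V\setminus\Span_M(U)$. Since $\rho_M(U)=k-1<k\leq r$, the set $W$ is nonempty. For every $w\in W$, the set $U\cup\{w\}$ is independent of size $d+m$, so it has rank $k$. Applying the original condition to $A\sqcup\{v_1,\dots,v_{m-2},w\}$, the face $A$ and the witnesses $v_j$ are excluded. Hence $\sigma\cup\{w\}\in X$ for every proper subset $\sigma\subsetneq A$.

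Lemma~\ref{lem:KL-boundary} now gives $W\cup(A\setminus\{a\})\in X$ for every $a\in A$. In particular $W\in X$. Inside $\lk(X,W)$, the induced subcomplex on $A$ is then the boundary of a $d$-simplex, because $A\notin X$ implies $A\notin\lk(X,W)$. Its $(d-1)$st reduced homology is nonzero, so $\delta_X(W)\geq d$.

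Moreover $\rho_M(V\setminus W)=\rho_M(\Span_M(U))=k-1$. Taking $\tau=W$ gives
\[
(\delta_X(W)+1)\binom{r-k+1}{q}\geq(d+1)\binom{s}{q}\geq d+1,
\]
as required.

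\emph{Main obstacle.} The step needing most care is the bookkeeping of the reduction from $(m,k)$ to $(m-1,k-1)$. One must check that the reduced parameters are admissible and that the range of $q$ and the bound $\rho_0\leq k-1$ survive the reduction. The topological core, namely the boundary-of-simplex argument via Lemma~\ref{lem:KL-boundary}, is the same as Case~1 of Proposition~\ref{prop:positive-adjacent}.
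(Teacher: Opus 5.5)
Your proposal is correct and follows the paper's proof essentially step by step. Both use induction on $m$, with the base case from Lemma~\ref{lem:certificate-m1}, the reduction to the parameters $(m-1,k-1)$, and otherwise the boundary-of-simplex argument with $\tau=W=V\setminus\Span_M(U)$. The only difference is that you write out the Case~1 argument from Proposition~\ref{prop:positive-adjacent} and the admissibility check, where the paper cites them.
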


\begin{proof}
We use induction on $m$. If $m=1$, then $k=d+1$, and the result follows from Lemma~\ref{lem:certificate-m1}. Assume that $m\geq2$. Suppose first that the Kim--Lew condition also holds for the parameters $m-1$ and $k-1=(m-1)+d$. Since $q\leq r-k+1<r-(k-1)+1$, the induction hypothesis gives the required face.

We may therefore assume that the smaller-parameter condition fails. Then there is a set
\[
U=A\sqcup\{v_1,\dots,v_{m-2}\},
\qquad |A|=d+1,
\]
such that $\rho_M(U)\geq k-1$, $A\notin X$, and, for every $j\in[m-2]$, there is a proper subset $\sigma_j\subsetneq A$ satisfying $\sigma_j\cup\{v_j\}\notin X$.
Indeed, $|U|=d+m-1=k-1$, so the rank threshold in the failed condition forces $\rho_M(U)=k-1$.

The argument in Case~1 of the proof of Proposition~\ref{prop:positive-adjacent} applies verbatim and shows that $W=V\setminus\Span_M(U)$ is a face of $X$ with $\delta_X(W)\geq d$ and $\rho_M(V\setminus W)=k-1$. Since $1\leq q\leq s=r-k+1$, taking $\tau=W$ completes the proof.
\end{proof}

\begin{proposition}[The case $k=m+d$]\label{prop:positive-extreme}
Assume the hypothesis of Conjecture~\ref{conjecture: generalization of Kim-Lew's result} and suppose that $k=m+d$. Set $s=r-k+1$. Then there exist a basis $B$ of $M$ and $d+1$ pairwise distinct sets $I_1,\dots,I_{d+1}\in\binomset{B}{s}$ such that
\[
\sum_{i=1}^{d+1}
\delta_X(V\setminus\Span_M(B\setminus I_i))
\geq0.
\]
\end{proposition}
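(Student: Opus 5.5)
The plan is to combine Lemma~\ref{lem:certificate-extreme} with Lemma~\ref{lem:certificate-selection}, taking $q=s$.

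\emph{Case $k\geq2$.} Since $k\geq2$, all the numerical hypotheses of Lemma~\ref{lem:certificate-selection} hold: $d\geq1$, $r\geq d+1$, and $1\leq s\leq r-k+1$. Lemma~\ref{lem:certificate-extreme} with $q=s$ produces a face $\tau\in X$ with $\rho_0=\rho_M(V\setminus\tau)\leq k-1$ satisfying
\[
(\delta_X(\tau)+1)\binom{r-\rho_0}{s}\geq d+1 .
\]
Feeding this $\tau$ into Lemma~\ref{lem:certificate-selection} with $q=s$ gives a basis $B$ and $d+1$ pairwise distinct sets $I_1,\dots,I_{d+1}\in\binomset{B}{s}$ whose link-Leray dimensions sum to a nonnegative number. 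This is exactly the conclusion of Proposition~\ref{prop:positive-extreme}.

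\emph{Case $k=1$.} This is the only point needing care. Since $k=m+d$, $m\geq1$ and $d\geq1$, we have $k\geq2$ automatically, so the case $k=1$ cannot occur. Hence, unlike Propositions~\ref{prop:positive-m1} and~\ref{prop:positive-adjacent}, no exceptional value $p=1$ arises, and the statement correctly asks for $d+1$ subsets.

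\emph{Remaining checks.} The only other thing to verify is that the counting step inside Lemma~\ref{lem:certificate-selection} has enough spare $s$-subsets. That lemma already uses $\binom{r}{q}\geq r\geq d+1$, which relies on $q\leq r-1$. This holds because $s=r-k+1\leq r-1$ when $k\geq2$. So no separate argument is needed, and the main work sits in Lemma~\ref{lem:certificate-extreme}, which is already established.
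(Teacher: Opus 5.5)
Your proposal is correct and is exactly the paper's proof: apply Lemma~\ref{lem:certificate-extreme} with $q=s$, then feed the resulting face $\tau$ into Lemma~\ref{lem:certificate-selection}. You also observe that $k=m+d\geq2$, so that $1\leq s\leq r-1$ and the hypotheses of Lemma~\ref{lem:certificate-selection} are satisfied; this is a valid check that the paper leaves implicit.
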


\begin{proof}
Apply Lemma~\ref{lem:certificate-extreme} with $q=s$, and then apply Lemma~\ref{lem:certificate-selection}.
\end{proof}

Propositions~\ref{prop:positive-m1},~\ref{prop:positive-adjacent}, and~\ref{prop:positive-extreme} prove Theorem~\ref{thm:m=1ork>=m+d-1}.

\section{Counterexample and sharpness}\label{sec:counterexample}

We now prove Theorem~\ref{theorem:counterexample-and-sharpness}. The counterexample and the sharpness construction arise from the same family of simplicial complexes.

For a finite set $E$, write
\[
K_d(E)=\{A\subseteq E:|A|\leq d\},
\]
which is the $(d-1)$-skeleton of the simplex on $E$. If $E$ and $Q$ are disjoint, define
\[
X_d(E,Q)=K_d(E)*2^Q
=\{S\subseteq E\sqcup Q:|S\cap E|\leq d\}.
\]

\begin{observation}\label{lem:common-construction}
Let $|E|=n\geq d+1$, $|Q|=b\geq1$, and $1\leq a\leq n$. Set $X=X_d(E,Q)$ and $M=U_{a,E}\oplus U_{b,Q}$. Then the following statements hold.
\begin{enumerate}[label=\textup{(\roman*)}]
\item The complex $X$ is $d$-Leray, and $L(K_d(E))=d$.
\item Every basis of $M$ has the form $B=D\sqcup Q$, where $D\in\binomset{E}{a}$. For $I\subseteq B$, put $x=|I\cap D|$ and $S_I=V\setminus\Span_M(B\setminus I)$.
Then
\begin{equation}\label{eq:span-computation}
S_I=
\begin{cases}
I,&x=0,\\[1mm]
(E\setminus(D\setminus I))\sqcup(I\cap Q),&x>0.
\end{cases}
\end{equation}
\item Under the notation of part~\textup{(ii)}, if $n=d+a$ and $|I|=b$, then
\begin{equation}\label{eq:one-good}
\delta_X(S_I)=
\begin{cases}
d,&I=Q,\\
-1,&I\neq Q.
\end{cases}
\end{equation}
\end{enumerate}
\end{observation}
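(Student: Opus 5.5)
The plan is to verify the three parts separately and directly from the definitions, using only the product structure $X=K_d(E)*2^Q$ and $M=U_{a,E}\oplus U_{b,Q}$.

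For part~(i), I will compute the induced subcomplexes. For $W\subseteq E\sqcup Q$ we have $X[W]=K_d(E\cap W)*2^{Q\cap W}$.
\begin{itemize}
\item If $Q\cap W\neq\varnothing$, this is a cone over any vertex of $Q\cap W$, hence contractible, so all its reduced homology vanishes.
\item If $Q\cap W=\varnothing$, then $X[W]=K_d(E\cap W)$ is the $(d-1)$-skeleton of a simplex (or the full simplex if $|E\cap W|\le d$). Its reduced homology is concentrated in degree $d-1$.
\end{itemize}
In both cases $\widetilde H_i(X[W])=0$ for all $i\geq d$, so $L(X)\le d$. For the equality $L(K_d(E))=d$, take $W\subseteq E$ with $|W|=d+1$, which exists since $n\ge d+1$. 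Then $K_d(E)[W]$ is the boundary of a $d$-simplex, a $(d-1)$-sphere, so $\widetilde H_{d-1}\neq0$.

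For part~(ii), recall that the bases of a direct sum are unions of bases of the summands. The bases of $U_{a,E}$ are the $a$-subsets of $E$, and $Q$ is the unique basis of $U_{b,Q}$ since $b=|Q|$. The span in a direct sum splits componentwise:
\[
\Span_M(B\setminus I)=\Span_{U_{a,E}}(D\setminus I)\sqcup\Span_{U_{b,Q}}(Q\setminus I).
\]
\begin{itemize}
\item In $U_{a,E}$, the set $D\setminus I$ has size $a-x$. If $x=0$, it has full rank $a$ and spans $E$. If $x>0$, it has size $a-x<a\le n$, so adding any further element of $E$ raises the rank, and the span is $D\setminus I$ itself.
\item In $U_{b,Q}$, which is the free matroid on $Q$, every set spans only itself, so $\Span(Q\setminus I)=Q\setminus I$.
\end{itemize}
Taking complements in $V=E\sqcup Q$ gives the following.
\begin{itemize}
\item If $x=0$, then $S_I=I\cap Q=I$, because $I\subseteq B$ and $I\cap D=\varnothing$ force $I\subseteq Q$.
\item If $x>0$, then $S_I=(E\setminus(D\setminus I))\sqcup(I\cap Q)$.
\end{itemize}
This is exactly \eqref{eq:span-computation}.

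For part~(iii), assume $n=d+a$ and $|I|=b$.
\begin{itemize}
\item If $I=Q$, then $x=0$ and $S_I=Q$, which is a face of $X$. Its link is $\{\sigma\subseteq E:|\sigma|\le d\}=K_d(E)$, because the $2^Q$ factor is absorbed. Part~(i) then gives $\delta_X(Q)=d$.
\item If $I\neq Q$, then $x>0$: otherwise $I\subseteq Q$ with $|I|=b=|Q|$ would force $I=Q$. So $|S_I\cap E|=n-(a-x)=d+x\ge d+1$. Hence $S_I\notin X$, and $\delta_X(S_I)=-1$ by Definition~\ref{def:link-Leray-dimension}.
\end{itemize}

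None of these steps is a serious obstacle. The only point needing care is the span computation in part~(ii). There one must check the edge cases: that $D\setminus I$ is non-spanning exactly when $x>0$, which uses $a\le n$; and that $U_{b,Q}$ is free because $b=|Q|$.
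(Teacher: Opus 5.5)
Your proposal is correct and follows the paper's proof essentially step for step: the cone versus $(d-1)$-skeleton dichotomy for induced subcomplexes together with the boundary of a $d$-simplex in (i), the componentwise span computation in the direct sum in (ii), and the identification $\lk(X,Q)=K_d(E)$ plus the count $|S_I\cap E|=d+x>d$ in (iii). The only blemish is the claim that the reduced homology of $K_d(E\cap W)$ is ``concentrated in degree $d-1$'', which fails for $W=\varnothing$, where $X[W]=\{\varnothing\}$ has $\widetilde H_{-1}\neq0$. This is harmless because only degrees $i\geq d$ matter, and the paper's phrasing ``dimension at most $d-1$'' avoids the issue.
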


\begin{proof}
For part~\textup{(i)}, let $W\subseteq E\sqcup Q$. If $W\cap Q\neq\varnothing$, then $X[W]=K_d(E\cap W)*2^{Q\cap W}$ is a cone. If $W\cap Q=\varnothing$, then $X[W]$ has dimension at most $d-1$. Thus, every induced subcomplex of $X$ has vanishing reduced homology in dimensions at least $d$, and $X$ is $d$-Leray.

The complex $K_d(E)$ has dimension $d-1$, so its Leray number is at most $d$. On the other hand, its restriction to any $(d+1)$-subset of $E$ is the boundary of a $d$-simplex and has nonzero reduced homology in dimension $d-1$. Therefore, $L(K_d(E))=d$.

For part~\textup{(ii)}, if $x=0$, then $I\subseteq Q$. The set $D$ spans all of $E$, while $Q\setminus I$ spans itself. Hence,
\[
\Span_M(B\setminus I)=E\sqcup(Q\setminus I),
\]
which implies $S_I=I$. If $x>0$, then $|D\setminus I|<a$, so the span of $D\setminus I$ in $U_{a,E}$ is $D\setminus I$. The $Q$-part is again $Q\setminus I$, and Equation~\eqref{eq:span-computation} follows.

For part~\textup{(iii)}, if $I=Q$, then $S_I=Q$ and $\lk(X,Q)=K_d(E)$. Part~\textup{(i)} gives $\delta_X(S_I)=d$. Assume that $I\neq Q$. Since $I\subseteq B=D\sqcup Q$ and $|I|=|Q|=b$, we must have $x=|I\cap D|\geq1$. Equation~\eqref{eq:span-computation} now gives
\[
|S_I\cap E|
=n-|D\setminus I|
=d+a-(a-x)
=d+x>d.
\]
Thus, $S_I\notin X$, and $\delta_X(S_I)=-1$.
\end{proof}

\begin{proof}[Proof of Theorem~\ref{theorem:counterexample-and-sharpness}]
We first prove part~\textup{(i)}. Let $E$ be a set of size $d+1$, let $Q=\{q\}$, and set
\[
X=X_d(E,Q),
\qquad
M=U_{d,E}\oplus U_{1,Q}.
\]
By Observation~\ref{lem:common-construction}, the complex $X$ is $d$-Leray, and the matroid $M$ has rank $d+1$. The only nonface of $X$ of size $d+1$ is $E$. Since $V$ is the only subset of $V$ of size $d+2$, Equation~\eqref{eq:strong-KL-condition} holds for $m=k=2$.

Every basis of $M$ has the form $B=D\sqcup\{q\}$, where $D\in\binomset{E}{d}$. Since $r-k+1=d$, the relevant subsets are the $d$-subsets of $B$. For $I=D$, we have
\[
V\setminus\Span_M(B\setminus I)=E\notin X,
\]
so the corresponding term is $-1$.

Every other $d$-subset has the form $I=B\setminus\{x\}$ for some $x\in D$. Since $d\geq2$, the singleton $\{x\}$ spans only itself in $U_{d,E}$. Therefore, $V\setminus\Span_M(B\setminus I)=V\setminus\{x\}$. This set is a facet of $X$. Its link is $\{\varnothing\}$ and has Leray number $0$. Thus, the remaining $d$ terms are $0$, and Equation~\eqref{eq:counterexample-total} follows.

We next prove part~\textup{(ii)}. Put $s=r-k+1$, choose disjoint sets $E$ and $Q$ with
\[
|E|=d+k-1,
\qquad
|Q|=s,
\]
and define
\[
X=X_d(E,Q),
\qquad
M=U_{k-1,E}\oplus U_{s,Q}.
\]
By Observation~\ref{lem:common-construction}, the complex $X$ is $d$-Leray, and the matroid $M$ has rank
\[
(k-1)+s=r.
\]
For every basis $B$ of $M$, Equation~\eqref{eq:one-good} shows that exactly one member of $\binomset{B}{s}$, namely $Q$, has link-Leray dimension $d$, while every other member has value $-1$. Thus, the sum over any $\ell$ pairwise distinct members is at most
\[
d-(\ell-1)=d+1-\ell,
\]
and equality is attained by choosing $Q$ together with any $\ell-1$ other members. This proves Equation~\eqref{eq:exact-sharpness-profile}.

It remains to verify the two further statements. Note first that
\[
|V|=|E|+|Q|=d+r,
\]
so the size requirement $|V|\geq\max\{m+d,r\}$ holds for every $m\leq r$.

Suppose first that $2\leq k\leq d+1$ and $m=1$. A $(d+1)$-set that is not a face of $X$ must be contained in $E$, and hence has rank at most $k-1$. Therefore, every $(d+1)$-set of rank at least $k$ is a face of $X$. This is precisely the Kim--Lew condition in Conjecture~\ref{conjecture: generalization of Kim-Lew's result} when $m=1$.

Assume now that $m$ lies in the parameter range of Conjecture~\ref{conjecture: generalization of Kim-Lew's result} and that $k\geq m+d-1$. Consider a set
\[
U=\{u_1,\dots,u_{d+1},v_1,\dots,v_{m-1}\}
\]
with $\rho_M(U)\geq k$, and put $A=\{u_1,\dots,u_{d+1}\}$. If $A\in X$, there is nothing to prove. Assume that $A\notin X$. Then $A\subseteq E$. If $m=1$, then $U=A$ and $\rho_M(U)\leq k-1$, a contradiction. Hence, the Kim--Lew condition holds in this case.

Assume that $m\geq2$. If all vertices $v_1,\dots,v_{m-1}$ belonged to $E$, then $U\subseteq E$ and $\rho_M(U)\leq k-1$, again a contradiction. Thus, some $v_j$ belongs to $Q$. Every proper subset $\sigma\subsetneq A$ has at most $d$ vertices in $E$, and therefore $\sigma\cup\{v_j\}\in X$. This proves the Kim--Lew condition for every $m$ in the stated range and completes the proof.
\end{proof}

\section*{Use of Generative-AI tools declaration}
ChatGPT was used in developing the proofs and writing this paper. In particular, discussions with ChatGPT helped improve and unify the results presented in Theorem~\ref{theorem:counterexample-and-sharpness}. It was also used extensively to refine the logical structure of the arguments and polish the exposition. The author has verified all mathematical claims and takes full responsibility for the final content.

\section*{Acknowledgments}
The author thanks Alexander Polyanskii for introducing the problem, suggesting a possible approach to the geometric version of the dimensional colorful Helly theorem (Theorem~\ref{thm:dim-colorful}), and proposing the link-Leray dimension in Definition~\ref{def:link-Leray-dimension} as an analogue of geometric dimension for simplicial complexes. The author is also grateful to him and Minki Kim for fruitful discussions.

\bibliographystyle{alpha}
\bibliography{references}

\end{document}